\numberwithin{equation}{section}
\numberwithin{figure}{section}
\newtheorem{thm}{Theorem}
\newtheorem{thrm}{Theorem}[section]
\newtheorem{prop}[thrm]{Proposition}
\newtheorem{lemma}[thrm]{Lemma}
\newtheorem{cor}[thm]{Corollary}
\newtheorem{coro}[thrm]{Corollary}
\theoremstyle{definition}
 \newtheorem*{dfn}{Definition}
 \newtheorem{expl}[thrm]{Example}
\theoremstyle{remark}
 \newtheorem*{rmk}{Remark}
 \newtheorem{rem}[thrm]{Remark}
\renewcommand{\epsilon}{\varepsilon}
\newcommand{\rgt}[1]{\right#1}
\newcommand{\lft}[1]{\left#1}
\newcommand{\rd}{\partial}
\newcommand{\R}{\mathbb{R}}
\newcommand{\C}{\mathbb{C}}
\newcommand{\Z}{\mathbb{Z}}
\newcommand{\inter}{\operatorname{int}}
\newcommand{\uv}[1]{\frac{\rd}{\rd #1}}
\newcommand{\uvv}[1]{\rd/\rd #1}
\newcommand{\intp}{\lrcorner}
\newcommand{\ostr}[1][2n-1]{\R^{#1}\times S^1}
\newcommand{\skor}{{\perp'}}
\newcommand{\csn}{\textup{CSN}}
\newcommand{\nul}[1]{\operatorname{Null}(#1)}
\newcommand{\omtw}{\omega_{\mathtt{tw}}}
\newcommand{\spn}[1]{\operatorname{Span}\{#1\}}
\newcommand{\rsurg}[1]{\mathbin{\tilde\#_{\{#1\}}}}
\newcommand{\du}[1][\sqrt{\pi}]{\operatorname{DU}(#1)}
\newcommand{\mlt}[1][\pi]{\operatorname{LU}(#1)}
\newcommand{\wgd}[1][\pi]{\operatorname{WGD}(#1)}
\newcommand{\dcyl}{\Delta_{\textup{cyl}}}
\begin{document} 
%
%
\title[Generalizations of twists of contact structures]
{Generalizations of twists of contact structures \\ to higher-dimensions 
via round surgery}
\author{Jiro ADACHI}
\address{Department of Mathematics, Hokkaido University, Sapporo, 
  060-0810, Japan.}
\email{j-adachi@math.sci.hokudai.ac.jp}
\subjclass[2010]{57R17, 53D35, 57R65}
\keywords{Contact structure, Round handle, Lutz twist, Symplectic fillability.}
\dedicatory{
\mbox{}}
\thanks{This work was supported by JSPS 
  KAKENHI Grant Number~25400077.}

\begin{abstract} 
  Twists of contact structures in dimension $3$ and higher are studied 
in this paper from a viewpoint of contact round surgery. 
 Three kinds of new modifications of contact structures 
which are higher-dimensional generalizations 
of the $3$-dimensional Lutz twists are introduced. 
 One of the operations makes a contact manifold overtwisted in any dimension.
 Another one makes it weakly symplectically non-fillable. 
 And the other one makes it strongly symplectically non-fillable. 
 In other words, they make the overtwisted disc, 
the bordered Legendrian open books, and  the Giroux domains, respectively. 
 The first two modifications can be realized by sequences of 
contact round surgeries. 
 The first operation can be applied anywhere of any contact manifold easily. 
 Further, a version of the first operation 
keeps the homotopy classes of contact structures as almost contact structures, 
and the other version contributes to the Euler classes of contact structures. 
\end{abstract}

\maketitle

\section{Introduction}\label{sec:intro}
  Classification and construction of contact structures and contact manifolds 
have been good and important problems in differential topology. 
 In these few decades, 
$3$-dimensional contact topology has developed drastically. 
 Compared with this, contact topology in higher dimensions 
has not been studied very much. 
 However, in recent few years, there are some remarkable movements 
on contact topology in higher dimensions. 
 Now, it is expected that considering from the unified perspective 
would give both $3$-dimensional and higher-dimensional contact topology 
good influences. 

  In this paper, we discuss certain torsions of contact structures 
in general dimensions. 
 There are three kinds of notions concerning torsion of contact structures: 
tightness, week and strong symplectic fillabilities. 
 The following inclusion relations are known for these notions: 
\begin{equation*}
  \{\text{strongly fillable}\}\subset\{\text{weekly fillable}\}
  \subset\{\text{tight}\}. 
\end{equation*}
 We discuss some higher-dimensional generalizations 
of the Lutz twist and the Giroux torsion. 
 In other words, three kinds of modifications of contact structures 
each of which obstructs each notion above are introduced in this paper. 
 This is the first observation of such operations 
for tightness or overtwistedness in higher dimensions. 
 Throughout this paper, 
observations from the view point of contact round surgery lie 
on the basement. 
 This may be one of the unified perspectives 
to observe torsions of contact structures, 
which leads us to a new direction. 

  The contact round surgery is a notion introduced by the author 
in \cite{art19} (see Section~\ref{sec:ctrdsurg} for definition). 
 The operation is defined for contact manifolds in any odd dimension. 
 The round surgery as a modification of manifolds is introduced 
by Asimov~\cite{asimov} to study non-singular Morse-Smale flows. 
 Not only for this purpose, round handle and round surgery 
have been used for several aspects. 
 Especially, the round surgery seems to get along well with contact structures. 
 There has been some attempts to apply the method to contact topology 
(see~\cite{art18}, \cite{art20}). 
 As a useful tool for the study of contact structures in general dimensions,
some applications of this method are expected. 

  Two important notions concerning torsion of contact structures, 
overtwistedness and the Giroux torsion, 
are introduced for contact structures on $3$-dimensional manifolds. 
 A \emph{contact structure}\/ is a completely non-integrable hyperplane field 
on an odd-dimensional manifold. 
 A contact structure $\xi$ on a $3$-dimensional manifold $M$ 
is said to be \emph{overtwisted}\/ 
if there exists an embedded disk $D\subset M$ which is tangent to $\xi$ 
along the boundary $\rd D$, that is, $T_{x}D=\xi_x$ at any $x\in\rd D$ 
(see Subsection~\ref{sec:ovtw} for precise definition). 
 Such a disk is called an \emph{overtwisted disk}. 
 It is known that, if a contact structure is overtwisted, 
then the contact manifold can not be the boundary 
of a compact symplectic manifold, in a weak sense (see \cite{geitext}). 
 Similarly, a contact structure $\xi$ is said 
to have the \emph{Giroux torsion}\/ at least $n\in\mathbb{N}$ 
if there exists a contact embedding 
$f_n\colon(T^2\times[0,1],\zeta_n)\to(M,\xi)$, 
where $\zeta_n:=\ker\{\cos(2n\pi r)d\theta+\sin(2n\pi r)d\phi\}$ 
with coordinates $(\phi,\theta,r)\in T^2\times[0,1]$. 
 It is known that, 
if a contact structure has the Giroux torsion greater than $0$, 
then the contact manifold can not be the boundary 
of a compact symplectic manifold in a strong sense (see \cite{gay}). 

  Some candidates of generalizations of the notions above has been proposed. 
 Recently, when this paper was being prepared, 
a new notion of overtwisted disc in all dimensions was announced 
by Borman, Eliashberg, and Murphy~\cite{newOT}. 
 It is stronger than other notions. 
 We discuss this new notion in Section~\ref{sec:newot}. 
 First, our discussion is based on the following notions 
introduced by Massot, Niederkr\"uger, and Wendl~\cite{mnw}.  
 As a higher-dimensional generalization of an overtwisted disk, 
a notion of \emph{bordered Legendrian open book}\/ (\textsf{bLob} for short) 
is introduced (see \cite{niedPS}, \cite{mnw}, \cite{gromov}). 
 Roughly speaking, it is an $(n+1)$-dimensional open book 
embedded in a $(2n+1)$-dimensional contact manifold $(M,\xi)$ 
whose pages and boundary are Legendrian 
(see Subsection~\ref{sec:bLob_Gdom} for precise definition). 
 It was proved by them that, if there exists a bordered Legendrian open book, 
then the contact manifold can not be the boundary 
of a compact symplectic manifold in a weak sense 
(see Theorem~\ref{thm:mnw_wkfill} for precise statement). 
 In this sense, a contact manifold which admits a bordered Legendrian open book
is said to be \emph{PS-overtwisted}. 
 In addition, 
as a higher-dimensional generalization of the Giroux $\pi$-torsion domain, 
a notion of \emph{Giroux domain}\/ is also introduced in~\cite{mnw}. 
 Roughly, a Giroux domain is a contactization 
of a certain $2n$-dimensional symplectic manifold 
with contact-type boundary in a $(2n+1)$-dimensional contact manifold 
(see Subsection~\ref{sec:bLob_Gdom} for precise definition). 
 It was proved in \cite{mnw} 
that, if there exists a domain that consists of two Giroux domains 
glued together, 
then the contact manifold can not be the boundary 
of a compact symplectic manifold in a strong sense 
(see Theorem~\ref{thm:mnw_gdom} for precise statement). 

  A modification of a contact structure on a $3$-dimensional manifold 
so that it has overtwisted disks is the so-called Lutz twist (see \cite{lutz}).
 It is a modification 
along a knot transverse to the contact structure 
replacing the standard tubular neighborhood 
$\lft(S^1\times D^2(\sqrt{\epsilon}),\zeta\rgt)$ 
with $\lft(S^1\times D^2(\sqrt{\epsilon+n\pi}),\zeta\rgt)$, 
where $\zeta=\ker\lft\{\cos r^2d\theta+\sin r^2d\phi\rgt\}$, 
$\epsilon>0$ sufficiently small, 
and $D^2(\sqrt{\epsilon})$ is a disk with radius $\sqrt{\epsilon}$ 
(see Subsection~\ref{sec:lutztwist} for precise definition). 
 When $n=1$ (resp.\ $n=2$), it is called the \emph{$\pi$-Lutz twist}\/ 
(resp.\ \emph{$2\pi$-Lutz twist}). 
 The Lutz twist makes an $S^1$-family of overtwisted disks. 
 The meridian disk $\{\phi=\text{const.\ }\}$ contains an overtwisted disk. 
 There exists an important difference between the $\pi$- and $2\pi$-Lutz twists.
 The $\pi$-Lutz twist contributes to the Euler class of a contact structure, 
while the $2\pi$-Lutz twist does not change the homotopy class 
of a contact structure as plane fields. 
 In addition to that, a similar modification along a pre-Lagrangian torus 
is also called the Lutz twist. 
 A pre-Lagrangian torus is an embedded torus in a contact $3$-manifold 
whose characteristic foliation is linear with closed leaves. 
 The \emph{Lutz twist}\/ along a pre-Lagrangian torus is a modification 
replacing $(T^2\times[\delta-\epsilon,\delta+\epsilon],\tilde\zeta)$ 
with $(T^2\times[\delta-\epsilon,\delta+\epsilon+k\pi],\tilde\zeta)$, 
where $\tilde\zeta=\ker\{\cos rd\phi+\sin rd\theta\}$ on $T^2\times\R$, 
$k\in\mathbb{N}$, 
and $T^2\times\{\delta\}$ is pre-Lagrangian. 
 This operation makes the \emph{Giroux $\pi$-torsion domain}\/ 
$(T^2\times[a,a+\pi],\tilde\zeta)$. 

  In this paper, generalizations of the Lutz twists are proposed. 
 We deal with generalization of the both $3$-dimensional Lutz twists 
along a transverse knot and along a pre-Lagrangian torus. 
 The basic ideas are the descriptions of the Lutz twists 
by contact round surgeries in dimension~$3$ (see \cite{art20}, \cite{art18}). 
 There exist other generalizations of the Lutz twists, 
which we mention later in this section. 

  First, we discuss a generalization of the Lutz twist along a transverse knot. 
 The description of the $3$-dimensional Lutz twist along a transverse knot 
by contact round surgeries begins 
with the contact round surgery of index~$1$ 
(see \cite{art20} or Subsubsection~\ref{sec:revrdsurgrep}). 
 We generalize this method to higher dimensions. 
 The first operation 
that we operate for a $(2n+1)$-dimensional contact manifold 
is also the contact round surgery of index~$1$. 
 This implies that the modified region is one of the connected components 
of the attaching region $\rd_-R^{2n+2}_1\cong\rd D^1\times D^{2n}\times S^1$ 
of the $(2n+2)$-dimensional symplectic round handle 
$R^{2n+2}_1\cong D^1\times D^{2n}\times S^1$ of index~$1$. 
 It is regarded as a tubular neighborhood of a certain circle. 
 The generalization of the Lutz twist proposed in this paper 
is operated along a circle embedded into a contact manifold 
which is transverse to the contact structure. 
 The first result is the following. 
%
%
\begin{thm}\label{thm_main}
  Let $(M,\xi)$ be a contact manifold of dimension $(2n+1)$, 
and $\Gamma\subset(M,\xi)$ an embedded transverse circle. 
 Then we can modify $\xi$ in a small tubular neighborhood of\/ $\Gamma$ 
so that the modified contact structure $\tilde\xi$ admits\textup{:} 
\begin{itemize}
\item an $S^1$-family of overtwisted discs, 
\item an $S^1$ family of the bordered Legendrian open books 
each of which has $(n-1)$-dimensional torus $T^{n-1}$ as binding. 
\end{itemize}
  Further, this modification has two versions. 
 One can be done 
so that $\tilde\xi$ is homotopic to the original contact structure $\xi$ 
as almost contact structures. 
 The other can be done 
so that it contributes to the Euler class of the contact structure. 
\end{thm}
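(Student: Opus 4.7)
The plan is to reduce everything to a standard local model via the transverse neighborhood theorem, perform an explicit contact round surgery of index~$1$ in that model, and then locate both the overtwisted discs and the bordered Legendrian open books inside the resulting ``Lutz tube''. First I would invoke the standard neighborhood theorem for transverse submanifolds to identify a tubular neighborhood of $\Gamma$ contactomorphically with $(S^1\times D^{2n}(\sqrt{\epsilon}),\zeta_{\mathrm{std}})$ for some contact form generalizing $\cos r^2\, d\theta+\sin r^2\, d\phi$ with $(\phi,r,\mathbf{u})\in S^1\times[0,\sqrt{\epsilon}]\times S^{2n-1}$ and $\theta\in S^1$ the Reeb-like coordinate. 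This model is exactly the one connected component of $\rd_-R^{2n+2}_1\cong\rd D^1\times D^{2n}\times S^1$ of a symplectic round handle of index~$1$, so the round surgery description from \cite{art20} translates verbatim into the required local operation.

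Next I would build the twisted model by replacing $(S^1\times D^{2n}(\sqrt{\epsilon}),\zeta_{\mathrm{std}})$ with $(S^1\times D^{2n}(\sqrt{\epsilon+k\pi}),\zeta_{\mathrm{std}})$ using the same angular formula $\cos r^2\, d\theta+\sin r^2\, d\phi$, which is precisely the higher-dimensional analogue of the Lutz twist along a transverse knot. Since the round handle of index~$1$ carries a symplectic structure compatible with its two contact boundary components, matching the new boundary back to $(M\setminus\nu(\Gamma),\xi)$ along the unchanged $\rd_+R^{2n+2}_1$ is automatic, and the resulting $\tilde\xi$ agrees with $\xi$ outside a slightly larger tube. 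The $S^1$-family of overtwisted discs is then produced exactly as in dimension~$3$: for each fixed value of the $(2n-2)$ transverse directions $\mathbf{u}$ and each $\phi_0\in S^1$, the $2$-disc $\{\phi=\phi_0,\ \mathbf{u}=\mathbf{u}_0\}$ sliced out of the thickened tube carries the $3$-dimensional overtwisted disc coming from the radial ``half-period'' in $r$.

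For the bordered Legendrian open book part I would exploit the extra transverse directions. The subset $\{r\in[r_1,r_2],\ \mathbf{u}\in S^{2n-1}\}\subset D^{2n}$ inside a $\phi_0$-slice admits an open-book structure whose pages are $(n+1)$-dimensional, fibered by a suitable coordinate function on the factor where $\cos r^2$ and $\sin r^2$ swap sign, while the binding sits at the locus where a chosen Legendrian $(n-1)$-torus in $\{r=r_\ast\}$ is traced out. Concretely I would take the binding to be the orbit of a flat Legendrian $T^{n-1}\subset\{r=r_\ast\}\cap S^{2n-1}$ under the $\phi$-translation, and verify tangency of the pages and boundary to $\tilde\xi$ directly from the angular contact form. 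This yields an $S^1$-family (parametrized by $\phi$) of bLobs as required, and verification of the Legendrian conditions is the step I expect to be the main obstacle, since it forces a careful choice of the Legendrian torus and of the open-book fibration on each page so that everything simultaneously lies in $\tilde\xi$.

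Finally I would split the construction into the $k=1$ (``$\pi$'') and $k=2$ (``$2\pi$'') versions. For $k=2$, the interpolation between $\zeta_{\mathrm{std}}$ on the inner boundary and the rotated copy on the outer boundary is by a full rotation of the hyperplane field, so the two almost contact structures are homotopic through a loop that can be contracted in $SO(2n+1)/U(n)$; standard obstruction theory then shows $\tilde\xi\simeq\xi$ as almost contact structures. For $k=1$, the same interpolation is only a half-rotation, and the change in the trivialization of the normal bundle of $\Gamma$ inside the contact distribution computes a nonzero contribution to $e(\tilde\xi)-e(\xi)$ in $H^2(M;\mathbb{Z})$, dual to $[\Gamma]$; this computation mirrors the three-dimensional one in \cite{art20} and completes the proof.
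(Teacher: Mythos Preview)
Your proposal has a genuine gap at the very first step, and it is the central difficulty of the theorem. You write that the twisted model is obtained by ``replacing $(S^1\times D^{2n}(\sqrt{\epsilon}),\zeta_{\mathrm{std}})$ with $(S^1\times D^{2n}(\sqrt{\epsilon+k\pi}),\zeta_{\mathrm{std}})$ using the same angular formula $\cos r^2\,d\theta+\sin r^2\,d\phi$.'' But this $1$-form, with a single radial coordinate $r$ and the remaining $S^{2n-1}$ directions treated as parameters, is \emph{not} a contact form on a $(2n+1)$-manifold when $n>1$: its top exterior power $\alpha\wedge(d\alpha)^n$ vanishes identically because $d\alpha$ has rank~$2$. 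The paper's actual model uses $n$ separate polar coordinates $(r_i,\theta_i)$ and the form
\[
\omega_{\mathtt{tw}}=\Bigl(\prod_{i=1}^n\cos r_i^2\Bigr)d\phi+\sum_{i=1}^n(\sin r_i^2)\,d\theta_i,
\]
which is still only a \emph{confoliation}: one computes $\omega_{\mathtt{tw}}\wedge(d\omega_{\mathtt{tw}})^n\ge0$ with equality along a positive-codimension locus. The paper then needs two further nontrivial steps you do not mention: first, it proves this confoliation is \emph{conductive} and invokes the Altschuler--Wu theorem to perturb it to a genuine contact structure; second, because in dimensions $>3$ the inner and outer boundaries of a spherical shell are not interchangeable, the simple ``replace small tube by large tube'' move fails, and one must instead form the \emph{double} $\mathrm{DU}(\sqrt{\pi})=U(\sqrt{\pi})\cup U(\sqrt{\pi})$ and remove a standard neighborhood of one transverse core to obtain the model $\pi$-Lutz tube. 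Without these ingredients there is no contact structure to glue back in.

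Several downstream claims also break. The overtwisted disc of Borman--Eliashberg--Murphy is a $2n$-dimensional disc with a prescribed germ of contact structure, not a $2$-disc sliced out at fixed $\mathbf{u}$; the paper constructs it explicitly inside the model tube in Section~\ref{sec:newot}. The bLob you sketch needs the specific form of $\omega_{\mathtt{tw}}$ to make the pages and boundary Legendrian; the paper locates it at $\{r_1\le\sqrt{\pi},\ r_2=\cdots=r_n=\sqrt{\pi}\}$, which only makes sense with the multi-radial model. Finally, the Euler class of a rank-$2n$ bundle lives in $H^{2n}(M;\mathbb{Z})$, not $H^2$, so the change is $-2\,\mathrm{PD}([\Gamma])\in H^{2n}(M;\mathbb{Z})$; your obstruction-theory sketch for the $2\pi$-case would also need to be redone against the actual (confoliation-derived) model rather than a nonexistent rotationally symmetric contact form.
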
 
\noindent 
 In this paper, we call these generalizations of the Lutz twist 
obtained in Theorem~\ref{thm_main} 
the \emph{generalized Lutz twist}\/ along a transverse circle. 

 Other higher-dimensional generalizations of the Lutz twist known so far 
preserve the homotopy class of a contact structure 
as almost contact structures. 
 On the other hand, the existence of a generalization of $\pi$-Lutz twist 
was an open question (see \cite{eptw}). 
 Then Theorem~\ref{thm_main} (see also Proposition~\ref{prop:cntrbtoE}) 
gives an answer to one of the questions in \cite{eptw}.

  Next, we discuss higher-dimensional generalizations 
of the $3$-dimensional Lutz twist along a pre-Lagrangian $2$-torus. 
 Two kinds of generalizations are proposed in this paper. 
 One is defined as a modification of a contact structure 
along the so-called $\xi$-round hyper surface $H=K^{2n-1}\times S^1$. 
 The other is defined as a modification of a contact structure 
along a pre-Lagrangian torus $T^{n+1}$ in dimension $2n+1$. 
 A \emph{$\xi$-round hypersurface}\/ introduced in \cite{mnw} 
is roughly a family of contact submanifolds 
(see Subsubsection~\ref{sec:xi-rdhyps}). 
 A \emph{pre-Lagrangian submanifold}\/ is a projection 
of a Lagrangian submanifold in the symplectization, 
which is roughly a family of Legendrian submanifolds 
(see Subsubsection~\ref{sec:preLag}). 
 The both of them can be regarded as higher-dimensional generalizations 
of pre-Lagrangian $2$-torus in a contact $3$-manifold. 

  The result for 
a $\xi$-round hypersurface 
is as follows. 
 Let $\eta_0=\ker\lft.\lft\{\sum_{i=1}^{n}r_i^2d\theta_i\rgt\}\rgt|_{TS^{2n-1}}$ 
be the standard contact structure on a sphere $S^{2n-1}\subset\R^{2n}$, 
where $(r_1,\theta_1,\dots,r_n,\theta_n)$ are the coordinates of $\R^{2n}$.  
%
%
\begin{thm}\label{thm:gdom-xird}
  Let $(M,\xi)$ be a contact manifold of dimension $(2n+1)$, $n>1$, 
and $H=S^{2n-1}\times S^1\subset(M,\xi)$ an embedded $\xi$-round hypersurface 
modeled on the standard contact sphere $(S^{2n-1},\eta_0)$. 
 Then we can modify $\xi$ in a small tubular neighborhood of $H$ 
so that the modified contact structure $\tilde\xi$ admits 
an $S^1$-family of bordered Legendrian open books 
whose bindings are $T^{n-1}$. 
 In other words, the contact manifold $(M,\tilde\xi)$ does not have 
any semi-positive weak symplectic filling if $M$ is closed. 

  If $n=1$, this modification is the $3$-dimensional Lutz twist 
along a pre-Lagrangian torus $H=S^1\times S^1$. 
 It makes not a bordered Legendrian open book, that is, an overtwisted disk, 
but the Giroux $\pi$-torsion domain. 
\end{thm}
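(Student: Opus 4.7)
The overall plan is to put $\xi$ into a standard form near $H$ using the neighborhood theorem for $\xi$-round hypersurfaces, perform a Lutz-type modification in the normal direction analogous to the $3$-dimensional pre-Lagrangian Lutz twist, exhibit an $S^1$-family of bordered Legendrian open books inside the resulting higher-dimensional Giroux $\pi$-domain, and then invoke Theorem~\ref{thm:mnw_wkfill} for the non-fillability conclusion.

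First I would appeal to the neighborhood theorem for $\xi$-round hypersurfaces from \cite{mnw} to identify a tubular neighborhood of $H=S^{2n-1}\times S^1$ contactomorphically with $S^{2n-1}\times S^1\times(-\epsilon,\epsilon)$, equipped after reparameterization with a contact form of shape $\cos r\cdot\alpha_0+\sin r\cdot d\phi$ on a small angular interval. Here $\alpha_0$ is the standard contact form on $S^{2n-1}$, $\phi$ is the $S^1$-coordinate, and $r$ is the normal direction. This is the direct higher-dimensional analog of the slab neighborhood used for a pre-Lagrangian $2$-torus in dimension three.

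Next I would perform the twist: replace this slab by $S^{2n-1}\times S^1\times[-\epsilon,\pi+\epsilon]$ with the same angular contact form extended through an additional $\pi$. Since along the gluing slices the inserted form differs from the original only by a nowhere-vanishing conformal factor, the two contact structures match as kernels and one obtains a well-defined contact structure $\tilde\xi$ on $M$. The inserted region contains the Giroux $\pi$-domain $S^{2n-1}\times S^1\times[0,\pi]$ with $\ker\{\cos r\cdot\alpha_0+\sin r\cdot d\phi\}$, the higher-dimensional analog of the $3$-dimensional Giroux $\pi$-torsion domain. Inside it I would use the standard Legendrian torus $T^{n-1}\subset(S^{2n-1},\eta_0)$, realized as a leaf of the Legendrian foliation on the Clifford torus, on which $\alpha_0$ vanishes identically; for each $\phi_0\in S^1$ and an appropriate embedded disc $D^2_{\phi_0}\subset S^1\times[0,\pi]$ centred on a chosen Legendrian circle, the $(n+1)$-dimensional product $T^{n-1}\times D^2_{\phi_0}$ should carry a bLob structure with binding $T^{n-1}\times\{\text{pt}\}$ and with Legendrian pages.

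The main technical obstacle is to verify that this candidate submanifold is genuinely a bLob in the sense of \cite{mnw}: one must check that its binding and pages are Legendrian and that its boundary is a Legendrian mapping torus. This reduces to a direct computation using the explicit model form, together with $\alpha_0|_{T^{n-1}}\equiv 0$ and the vanishing of $\sin r$ at the binding. Once established, Theorem~\ref{thm:mnw_wkfill} applied to any one of these bLobs yields the semi-positive weak non-fillability of $(M,\tilde\xi)$ when $M$ is closed. The case $n=1$ is handled separately: then $S^{2n-1}=S^1$ makes $H$ a pre-Lagrangian $2$-torus and the modification reproduces the classical pre-Lagrangian Lutz twist of \cite{lutz}; since $n-1=0$ leaves no room for a bLob, the conclusion is simply that the inserted slab is the $3$-dimensional Giroux $\pi$-torsion domain.
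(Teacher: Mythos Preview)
Your approach has a fundamental gap at the outset: the $1$-form $\lambda=\cos r\cdot\alpha_0+\sin r\cdot d\phi$ is \emph{not} a contact form on $S^{2n-1}\times S^1\times[0,\pi]$ when $n>1$. A direct computation gives $\lambda\wedge(d\lambda)^n=n\,f^{n-1}(fg'-gf')\,\alpha_0\wedge(d\alpha_0)^{n-1}\wedge dr\wedge d\phi$ for any $\lambda=f(r)\alpha_0+g(r)d\phi$; with $f=\cos r$ this vanishes at $r=\pi/2$. Since any genuine $\pi$-twist forces $f$ to change sign and hence to vanish somewhere, no choice of $(f,g)$ produces a contact slab of this shape that can be inserted along $H$ once $n>1$. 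Even setting this aside, your product bLob $T^{n-1}\times D^2_{\phi_0}$ cannot work: since $\alpha_0|_{T^{n-1}}=0$, the form on such a product reduces to $\sin r\,d\phi$, and the Legendrian conditions on the pages and on $\partial D^2_{\phi_0}\subset S^1\times[0,\pi]$ become exactly the conditions for an overtwisted disc in the $3$-dimensional Giroux $\pi$-torsion domain---which does not exist. Taking a product with $T^{n-1}$ does not circumvent the $n=1$ obstruction.

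The paper's proof takes a genuinely different route. Rather than a closed-form contact slab, it starts from the \emph{confoliation} $\zeta=\ker\omtw$ on $S^1\times\R^{2n}$ (Subsection~\ref{sec:gLtube}), forms the double of the polydisc region $U(\sqrt\pi)$, and perturbs the resulting conductive confoliation to a contact structure via the Altschuler--Wu theorem (Theorem~\ref{thm:approx}). Removing small neighbourhoods of \emph{both} transverse core circles yields the wide Giroux domain $\lft(\wgd,\tilde\zeta\rgt)$, which is then inserted after cutting $(M,\xi)$ open along $H$ using Proposition~\ref{prop:xi-rdhyps}. The $S^1$-family of bLobs with binding $T^{n-1}$ is located explicitly in the confoliation model at $P=\{r_1\le\sqrt\pi,\ r_2=\cdots=r_n=\sqrt\pi\}$ (Proposition~\ref{prop:exblob}); for $n>1$ this locus lies in the contact part and away from the removed cores (Remark~\ref{rem:gltxird-diff}), which is precisely what fails for $n=1$.
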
 
%
%
\begin{rmk}
  Although the modification in Theorem~\ref{thm_main} creates overtwisted discs,
that in Theorem~\ref{thm:gdom-xird} does not directly. 
 We need further global observations to look for overtwisted discs. 
\end{rmk}
\noindent
 We call, in this paper, this generalization 
the \emph{generalized Lutz twist 
along a $\xi$-round hypersurface modeled on the standard contact sphere}. 
 The difference between the two cases, $n>1$ and $n=1$, 
comes from the shape of the (generalized) Lutz tube 
(see Remark~\ref{rem:gltxird-diff}). 
%
%

  The result for 
a pre-Lagrangian torus is as follows. 
 The preceding generalization in Theorem~\ref{thm:gdom-xird}
takes a $\xi$-round hypersurface $H=S^{2n-1}\times S^1$ 
as a generalization of a pre-Lagrangian $2$-dimensional torus 
in a $3$-dimensional contact manifold. 
 The following generalization takes a pre-Lagrangian torus $T^{n+1}$ instead. 
 We can mention the strong fillability by this method. 
%
%
\begin{thm}\label{thm:gdom-pLag}
  Let $(M,\xi)$ be a contact manifold of dimension $(2n+1)$, 
and $T^{n+1}=T^n\times S^1\subset(M,\xi)$ an embedded pre-Lagrangian torus 
with a product foliation by Legendrian torus leaves $T^n\times\{\ast\}$. 
 Then we can modify $\xi$ in a small tubular neighborhood of $T^{n+1}$ 
so that the modified contact structure $\tilde\xi$ admits 
the Giroux domain.   
\end{thm}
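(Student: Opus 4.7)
The plan is to establish a standard tubular neighborhood of the pre-Lagrangian $T^{n+1}$, then perform a Lutz-type twist in a single transverse direction, and finally identify the resulting structure as a Giroux domain.

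First I would prove a parametric Weinstein-type neighborhood theorem: since the Legendrian foliation is the product foliation by $T^n\times\{\ast\}$, a relative Moser argument identifies a tubular neighborhood of $T^{n+1}$ with $(T^{n+1}\times\R^n_\epsilon,\alpha_0)$, where $\alpha_0 = d\theta - \sum_{i=1}^{n} y_i\,d\phi_i$ in coordinates $(\phi_1,\ldots,\phi_n,\theta,y_1,\ldots,y_n)$. The pre-Lagrangian sits at $\{y=0\}$ and the Legendrian leaves are $\{y=0,\,\theta=\text{const}\}$.

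Next I would modify $\alpha_0$ inside the tube by performing a Lutz-type twist in the single direction $y_1$. Set
\[
\tilde\alpha = f(y_1)\,d\theta + g(y_1)\,d\phi_1 - \sum_{i=2}^{n} y_i\,d\phi_i,
\]
where $(f,g)\colon \R\to \R^2\setminus\{0\}$ is a smooth path equal to $(1,-y_1)$ for $|y_1|\geq \delta$ and making one additional full turn about the origin on $|y_1|\leq \delta/2$. Using the fact that the rank-two form $dy_1\wedge(f'\,d\theta+g'\,d\phi_1)$ squares to zero, a short computation gives
\[
\tilde\alpha\wedge(d\tilde\alpha)^n = \pm\,n!\,(fg'-gf')\,dy_1\wedge d\theta\wedge d\phi_1\wedge\bigwedge_{i\geq 2}(dy_i\wedge d\phi_i),
\]
so $\tilde\alpha$ is contact precisely when $(f,g)$ has nonvanishing angular velocity, and matches $\alpha_0$ outside $|y_1|<\delta$. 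Gluing with the original contact structure off the tube yields the global modified contact structure $\tilde\xi$ on $M$.

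Third I would locate the Giroux domain. On the subregion where $(f,g)=(\sin r,\cos r)$ for $r$ running over an interval of length $\pi$, the form factorizes as $\tilde\alpha = \sin r\cdot(d\theta + \lambda)$ with
\[
\lambda = \cot r\,d\phi_1 - \frac{1}{\sin r}\sum_{i\geq 2} y_i\,d\phi_i
\]
on the $2n$-dimensional slice $W=[0,\pi]_r\times S^1_{\phi_1}\times T^{n-1}\times D^{n-1}$. An expansion of $(d\lambda)^n$ yields a top-form whose absolute value is a positive multiple of $\sin^{-(n+1)}r$, so $d\lambda$ is symplectic on the interior and blows up precisely at $r\in\{0,\pi\}$, making $(W,d\lambda)$ an ideal Liouville domain in the sense of~\cite{mnw}. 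The contactization $W\times S^1_\theta$ with contact form $d\theta+\lambda$ is then the desired embedded Giroux domain inside $(M,\tilde\xi)$.

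The main technical obstacle lies in step one: establishing the standard model for a pre-Lagrangian equipped with an $S^1$-family of Legendrian tori requires a careful parametric Weinstein argument that exploits the product foliation hypothesis to globally trivialize the normal cotangent data along the $S^1$-factor of $T^{n+1}$. Once this normal form is in hand, the remaining steps essentially reduce to the three-dimensional Lutz twist along a pre-Lagrangian torus performed in the $(\phi_1,\theta,y_1)$-directions, taken in product with the trivial Liouville factor carrying the remaining coordinates $(\phi_i,y_i)$ for $i\geq 2$.
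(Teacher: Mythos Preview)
Your normal form in step one and your contact computation in step two are both fine, and the modified $\tilde\alpha$ is indeed a genuine contact form on the tube that agrees with $\alpha_0$ near the boundary. The gap is in step three: for $n>1$, the region $W=[0,\pi]_r\times S^1_{\phi_1}\times T^{n-1}\times D^{n-1}$ is \emph{not} an ideal Liouville domain, so $W\times S^1_\theta$ is not a Giroux domain. The definition requires that $\partial\Sigma$ be exactly the regular level set $f^{-1}(0)$; with your $f=\sin r$ this picks out only the faces $\{r=0\}$ and $\{r=\pi\}$, whereas $W$ also has the boundary component $[0,\pi]\times S^1\times T^{n-1}\times\partial D^{n-1}$ coming from the transverse $(y_2,\dots,y_n)$-disk, on which $\sin r$ does not vanish. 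Equivalently, your computation shows $(d\lambda)^n$ is a nonzero multiple of $\sin^{-(n+1)}r$, so $d\lambda$ is perfectly regular along $\partial D^{n-1}$ rather than blowing up there as it must along the entire boundary of an ideal Liouville domain. Replacing $D^{n-1}$ by the open disk or by all of $\R^{n-1}$ does not help, since then $W$ fails to be compact. In short, a Lutz twist in a single normal direction reproduces the three-dimensional Giroux $\pi$-torsion domain in the $(r,\phi_1,\theta)$-slice, but the remaining $2(n-1)$ normal directions have no natural way to close up into a compact piece with the correct boundary behaviour.

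The paper avoids exactly this problem by twisting symmetrically in all $n$ normal directions at once: the model contact form is $\bigl(\prod_{i=1}^n\cos s_i\bigr)d\phi+\sum_{i=1}^n\sin s_i\,d\theta_i$ on $T^{n+1}\times I^n$, so that the contactization function $f=\prod_i\cos s_i$ vanishes on the \emph{entire} boundary of $\Sigma^{2n}\cong T^n\times I^n$ (after smoothing corners), and $(\Sigma^{2n},d\beta)$ is a bona fide ideal Liouville domain. The price is that this symmetric form is only a confoliation rather than a contact form, so the paper has to pass to a double and invoke a gluing (``blowing up'') argument to match boundaries, rather than the simple cut-and-paste you do in step two. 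If you want to salvage your approach, you would need to twist in every $y_i$-direction simultaneously, which brings you back to the paper's construction.
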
 
\noindent
 We call, in this paper, this generalization simply 
the \emph{generalized Lutz twist along a pre-Lagrangian torus 
with a product foliation by Legendrian torus leaves}. 

%
%
\begin{rmk}
(1)\   The modification in Theorem~\ref{thm:gdom-pLag} does not creates 
the bordered Legendrian open book directly. 
 We need further global observations to look for it. \\
(2)\ Relations between the Giroux domains and symplectic fillings 
are studied in \cite{mnw}. 
 From their result (see Theorem~\ref{thm:mnw_gdom}) and 
the construction of this modification (in Subsection~\ref{sec:Gdom}), 
a contact structure is modified by the generalized Lutz twist 
in Theorem~\ref{thm:gdom-pLag},
so that the modified contact manifold have 
no semi-positive strong symplectic filling. 
\end{rmk}

  The key ideas for Theorem~\ref{thm_main} are the following. 
 One of the important object is a generalization of the Lutz tube. 
 Recall that a contact round surgery description 
of the $3$-dimensional Lutz twist needs the open Lutz tube: 
\begin{equation*}
  (S^1\times\R^2,\zeta_0),\qquad 
  \zeta_0=\ker\lft\{\cos r^2d\phi+\sin r^2d\theta\rgt\}, 
\end{equation*}
where $(\phi,r,\theta)$ is the cylindrical coordinates of $S^1\times\R^2$ 
(see \cite{art20}, or Subsubsection~\ref{sec:revrdsurgrep}). 
 As a generalization, we take the hyperplane field $\zeta$ on $S^1\times\R^{2n}$
defined as 
\begin{equation*}
  \zeta
  =\ker\lft\{\prod_{i=1}^n(\cos r_i^2)d\phi
  +\sum_{i=1}^n(\sin r_i^2)d\theta_i\rgt\}, 
\end{equation*}
where $(\phi,r_i,\theta_i)$ are coordinates of $S^1\times\R^{2n}$ 
(see Subsection~\ref{sec:gLtube}). 
 It is not a contact structure but a confoliation. 
 Like there exists an overtwisted disk 
in $\{\ast\}\times\R^2\subset(S^1\times\R^2,\zeta_0)$, 
a bordered Legendrian open book with a torus $T^{n-1}$ as binding exists 
in $\{\ast\}\times\R^{2n}\subset\lft(S^1\times\R^{2n},\zeta\rgt)$. 
 The important observation is 
that, in dimension~$3$, there is no difference, outer or inner, 
between two boundary components of the toric annulus 
$S^1\times\lft\{D^2(\sqrt{2\pi})\setminus\inter D^2(\sqrt{\pi})\rgt\}
=T^2\times\lft[\sqrt{\pi},\sqrt{2\pi}\rgt]\subset(S^1\times\R^2,\zeta_0)$. 
 However, in higher-dimensions, it is not true. 
 As we observe in Subsubsection~\ref{sec:revrdsurgrep}, 
from the view point of round surgery, 
the Lutz twist is not the simple replacement of $S^1\times D^2(\sqrt{\epsilon})$
with $S^1\times D^2(\sqrt{\epsilon+\pi})\subset(S^1\times\R^2,\zeta_0)$. 
 It is the replacement of $S^1\times D^2(\sqrt{\epsilon})$ 
with the toric annulus 
$T^2\times\lft[\sqrt{\pi},\sqrt{2\pi}\rgt]$ 
and the blowing down along the ``outer'' end $T^2\times\{\sqrt{2\pi}\}$. 
 In order to generalize this operations to higher-dimensions, 
we regard the toric annulus $T^2\times\lft[\sqrt{\pi},\sqrt{2\pi}\rgt]$ 
as two toric annuli $T^2\times\lft[\sqrt{\pi},\sqrt{3\pi/2}\rgt]$ 
glued along $T^2\times\{\sqrt{3\pi/2}\}$. 
 We generalize these observations to higher-dimensions. 
 We introduce the double $\du:=U(\sqrt{\pi})\cup U(\sqrt{\pi})$ of 
\begin{equation*}
  U(\sqrt{\pi})=\lft\{0\le r_i\le\sqrt{\pi},i=1,\dots,n\rgt\}
  \subset\lft(S^1\times\R^{2n},\zeta\rgt)
\end{equation*}
as a fundamental unit. 
 From the confoliation $\zeta'$ on $\du$ obtained from $\zeta$, 
a contact structure $\tilde\zeta$ on $\du$ is obtained. 
 Removing the standard tubular neighborhood 
of the transverse core 
$S^1\times\{0\}\subset U(\sqrt{\pi})\subset\lft(S^1\times\R^{2n},\zeta\rgt)$ 
from $\du$, 
we obtain a contact manifold diffeomorphic to $S^1\times D^{2n}$. 
 We call it the \emph{model $\pi$-Lutz tube}, 
and let $\lft(\mlt,\tilde\zeta\rgt)$ denote it 
(see Figure~\ref{fig:cnstgltube}). 

  The generalized Lutz twist along an embedded transverse circle 
and that along a $\xi$-round hypersurface modeled on the standard contact sphere
are described by contact round surgeries. 
 Although the descriptions are not explicitly used 
in the definition or proofs of the theorems, 
these are the fundamental ideas of this paper. 
%
%
\begin{thm}\label{thm:hdim} 
  We deal with a $(2n+1)$-dimensional contact manifold $(M,\xi)$.  
\begin{enumerate}
  \item The generalized Lutz twist 
    along a transverse curve $\Gamma\subset(M,\xi)$ 
    is realized by a certain pair of contact round surgeries 
    of index $1$ and $2n$ with the model Lutz tube. 
  \item The generalized Lutz twist
    along a $\xi$-round hypersurface $H=S^{2n-1}\times S^1$ 
    modeled on the standard contact sphere 
    is realized by contact round surgeries 
    of index $2n$ and $1$ with the model Lutz tube. 
\end{enumerate}
\end{thm}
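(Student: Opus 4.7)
The plan is to adapt the 3-dimensional round surgery description of the Lutz twist from \cite{art20} (reviewed in Subsection~\ref{sec:revrdsurgrep}) to higher dimensions, using the model $\pi$-Lutz tube $(\mlt,\tilde\zeta)$ as the $(2n+1)$-dimensional analogue of the 3-dimensional Lutz tube. The starting observation is that the natural decomposition $\du = U(\sqrt\pi) \cup U(\sqrt\pi)$ of the double, glued along the locus $\{r_i = \sqrt\pi\}$, matches precisely the two complementary round handle indices available: the attaching region $\partial_- R^{2n+2}_1 \cong S^0 \times D^{2n} \times S^1$ of a round $1$-handle is a disjoint union of two transverse circle tubes, while the attaching region $\partial_- R^{2n+2}_{2n} \cong S^{2n-1} \times D^1 \times S^1$ of a round $2n$-handle is a tubular neighborhood of a $\xi$-round hypersurface modeled on $S^{2n-1}$. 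This is the only pair of complementary indices whose attaching regions match the transverse-circle and $\xi$-round hypersurface geometry used by the two generalized Lutz twists.

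For Part~(1), I would choose an auxiliary transverse circle $\Gamma'\subset(M,\xi)$ inside a Darboux chart, and equip both $\Gamma$ and $\Gamma'$ with their standard transverse tubular neighborhoods identified with $S^1 \times D^{2n}(\sqrt\epsilon)$. First perform a contact round surgery of index~$1$ along the pair $\Gamma\sqcup\Gamma'$, taking as contact round handle the first copy of $U(\sqrt\pi)$ in $\du$ with tubular neighborhoods of its two transverse cores removed. After this step the two original tubes have been replaced by a region carrying the (smoothed) contact structure of this first $U(\sqrt\pi)$, and there is a distinguished $\xi$-round hypersurface $\{r_i=\sqrt\pi\}$ modeled on the standard contact sphere sitting in the interior of the modified manifold. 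Then perform a contact round surgery of index~$2n$ along this new hypersurface, using as round handle the second copy of $U(\sqrt\pi)$ minus a tubular neighborhood of one of its transverse cores. The two round handles glue along $\{r_i=\sqrt\pi\}$ to reconstruct $\du$, and passing to the complement of the removed core inserts exactly $(\mlt,\tilde\zeta)$ in place of $N(\Gamma) \sqcup N(\Gamma')$; the remaining transverse core of the second copy of $U(\sqrt\pi)$ plays the role of $\Gamma'$ with its standard contact neighborhood, so outside $N(\Gamma)$ the contact manifold is unchanged, and the net modification is precisely the generalized Lutz twist of Theorem~\ref{thm_main}.

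For Part~(2) the argument is dual, with the indices exchanged. Starting from $H = S^{2n-1}\times S^1\subset(M,\xi)$ and an auxiliary $\xi$-round hypersurface $H'$ of the same model, the first contact round surgery is of index~$2n$, with round handle modeling one copy of $U(\sqrt\pi)$ minus a neighborhood of its transverse core; its belt region contains two transverse circles embedded in the modified manifold. The second contact round surgery is of index~$1$, performed along these two new transverse circles, and uses as round handle the second copy of $U(\sqrt\pi)$ minus a neighborhood of its core. As in Part~(1), the two handles glue across the $\{r_i=\sqrt\pi\}$-locus to yield $\du$; removing the remaining core produces $(\mlt,\tilde\zeta)$ inserted in a neighborhood of $H$, which realizes the generalized Lutz twist of Theorem~\ref{thm:gdom-xird}.

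The main obstacle will be the contact-geometric matching at each stage. Since $U(\sqrt\pi)$ initially carries only a confoliation, one must smooth corners and deform the structure in collars as in the construction of $(\mlt,\tilde\zeta)$ in Subsection~\ref{sec:gLtube}, and verify that the resulting contact structure agrees on overlap with the standard contact round handle structure of \cite{art19} along each attaching region, and also that the two round handles glue compatibly across $\{r_i=\sqrt\pi\}$ to yield precisely $(\mlt,\tilde\zeta)$ rather than some other contact structure on the same underlying manifold. Once these contact-isotopy identifications are in place, the two surgeries automatically realize the desired replacement and the theorem follows.
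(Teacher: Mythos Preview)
Your overall shape---index~$1$ followed by index~$2n$ for Part~(1), and the reverse for Part~(2)---matches the paper, but the implementation diverges from the paper's and contains real gaps.

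First, contact round surgery of index~$1$ (as defined in Section~\ref{sec:ctrdsurg}) is performed along \emph{isotropic} circles with a trivialization of the conformal symplectic normal bundle, and the index~$2n$ surgery is performed along a \emph{convex} hypersurface with a specific dividing set $S^{2n-2}\times S^1$. You work directly with transverse circles and $\xi$-round hypersurfaces. The paper devotes Subsection~\ref{sec:approx} to the necessary approximation procedures (isotropic push-offs and perturbation of a $\xi$-round hypersurface to a convex one); without these steps the surgeries are not even defined on your attaching loci.

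Second, and more seriously, your index~$1$ surgery in Part~(1) places both attaching circles $\Gamma,\Gamma'$ inside $(M,\xi)$ and tries to use ``the first copy of $U(\sqrt\pi)$ with tubular neighborhoods of its two transverse cores removed'' as the round handle. But $U(\sqrt\pi)\cong S^1\times D^{2n}$ has a single transverse core $S^1\times\{0\}$, and in any case a contact round $1$-surgery uses the \emph{standard} symplectic round handle of Subsection~\ref{sec:symprdhdl}, not an exotic piece of the Lutz tube. The paper instead takes one isotropic circle $L(\gamma)\subset(M,\xi)$ and one isotropic circle $L(\Gamma)$ inside the model Lutz tube $(\mlt,\tilde\zeta)$ itself, and the index~$1$ surgery joins $M$ to $\mlt$; the subsequent index~$2n$ surgery is then performed near $\partial\mlt$, recovering $M$ as a manifold with the model tube inserted. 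Your auxiliary $\Gamma'$ in a Darboux chart plays no role in the paper's argument.

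For Part~(2), the paper does not use an auxiliary hypersurface $H'$, and it needs more than two operations: after the initial index~$2n$ surgery along the perturbed $H$, it performs a full generalized Lutz twist (itself a pair of surgeries by Part~(1)) along one of the resulting isotropic cores, and then a further index~$1$ surgery to close up. Your two-step outline does not account for this.
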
 
\noindent
  As a matter of fact, the observations in Theorem~\ref{thm:hdim} 
is a motivation of this paper. 
 In fact, it is proved in \cite{art20} that the $3$-dimensional Lutz twists 
are realized by contact round surgeries. 
 Then it was expected that a similar operations by contact round surgeries 
corresponds to a higher-dimensional generalization of the Lutz twists.  

  From the view point of round surgery, 
it is interesting to regard the double $\lft(\du,\tilde\zeta\rgt)$ 
a certain unit. 
 The generalized Lutz twists dealt in Theorem~\ref{thm:hdim} 
are considered as operations taking in this unit by contact round surgeries 
(see Figures~\ref{fig:l1tw} and~\ref{fig:l2ntw}). 


  Next, we mention some applications of Theorem~\ref{thm_main}, 
although they are not quite a new results. 

 Recall that the generalized Lutz twist is operated 
along an embedded circle transverse to the contact structure. 
 We claim that any circle $\gamma$ embedded in a contact manifold $(M,\xi)$ 
can be approximated by a circle $\bar\gamma$ transverse to $\xi$ 
(see Subsubsection~\ref{sec:findtrvs1}). 
 Therefore, we can apply the generalized Lutz twist anywhere we like. 
 This implies the following. 
%
%
\begin{cor}\label{cor:anywhere}
  If an odd-dimensional manifold $M$ has a contact structure, 
it admits an overtwisted contact structure. 
\end{cor}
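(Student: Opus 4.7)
The plan is to reduce the statement to a single application of Theorem~\ref{thm_main}. That theorem modifies a contact structure in a tubular neighborhood of any embedded transverse circle so that the modified structure admits overtwisted discs; hence the only task is to exhibit some embedded circle in an arbitrary contact $(M,\xi)$ which is transverse to $\xi$.

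First I would produce a transverse circle locally. Pick any point $p\in M$ and a Darboux chart around it. In the standard model contact structure on a ball in $\R^{2n+1}$ there is an explicit vector field transverse to $\xi$ (for example, the Reeb field of the standard contact form); flow a short arc along it and close the arc up to a smoothly embedded loop $\bar\gamma$ inside the chart. By construction $\bar\gamma$ is transverse to $\xi$. More globally, as the paper records in Subsubsection~\ref{sec:findtrvs1}, any embedded circle $\gamma\subset M$ can be $C^{0}$-approximated by a transverse circle via a standard perturbation in local flow boxes, so one need not even rely on the Darboux construction.

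With such a $\bar\gamma$ in hand, Theorem~\ref{thm_main} directly furnishes a contact structure $\tilde\xi$ on $M$ which agrees with $\xi$ outside a small tubular neighborhood of $\bar\gamma$ and admits an $S^{1}$-family of overtwisted discs. In particular $(M,\tilde\xi)$ is overtwisted, proving the corollary.

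The main obstacle in the argument is not the creation of the overtwisting, which is handed to us by Theorem~\ref{thm_main}, but rather the (classical and essentially routine) existence and transverse-approximation step for embedded circles, which is why the paper isolates it in Subsubsection~\ref{sec:findtrvs1}. Once that transversality input is accepted, the corollary follows immediately.
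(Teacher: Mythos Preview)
Your proposal is correct and matches the paper's approach: exhibit a transverse circle (the paper does this in Subsubsection~\ref{sec:findtrvs1} via Chow's lemma followed by a transverse push-off of the resulting isotropic circle) and then invoke Theorem~\ref{thm_main}. One small caution: merely closing up a Reeb arc in a Darboux chart need not produce a loop that is everywhere transverse, but you can instead write down an explicit transverse circle such as $\{r_1=c,\ r_2=\dots=r_n=0,\ z=0\}$ in the model $\ker\bigl(dz+\sum_i r_i^2\,d\theta_i\bigr)$, or simply rely on Subsubsection~\ref{sec:findtrvs1} as you already do.
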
 
\noindent
 For PS-overtwisted structures, this result 
was proved by Niederkr\"uger and van~Koert~\cite{niedvkoe},
and by Presas~\cite{presas}. 
 It can also be proved by using other generalization of the Lutz twist 
due to Etnyre and Pancholi~\cite{eptw}. 
 However, as the other generalization due to Massot, Niederkr\"uger, 
and Wendl~\cite{mnw} requires stricter conditions, 
it can not be applied everywhere. 
 In~\cite{newOT}, it is proved 
that, in every homotopy class of almost contact structures, 
there exists an overtwisted contact structure.

 An argument similar to Theorem~\ref{thm_main} can be applied 
to some open case. 
 It was claimed by Etnyre and Pancholi~\cite{eptw}, 
and Niederkr\"uger and Presas~\cite{niedpres}  
that there exist at least three distinct contact structures 
on $\mathbb{R}^{2n+1}$, $n\ge1$. 
 One is the standard contact structure 
$\ker\{d\phi+\sum_{i=1}^{n}r_i^2d\theta_i\}$. 
 Another is PS-overtwisted but standard at infinity, that is, 
it is standard outside a compact subset. 
 The other is PS-overtwisted at infinity, that is, for any relatively compact 
open subset, there exists a bordered Legendrian open book outside the subset. 
 By an argument similar to Theorem~\ref{thm_main}, 
a contact structure which is overtwisted at infinity can be constructed 
(see Subsection~\ref{sec:non-cpt}). 
 Then this implies the following. 
%
%
\begin{cor}\label{thm:exoticR2n1}
  There are at least three distinct contact structures 
on $\mathbb{R}^{2n+1}$, $n\ge1$. 
\end{cor}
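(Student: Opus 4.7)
\emph{Plan.} I will exhibit three contact structures on $\mathbb{R}^{2n+1}$ and distinguish them by the location of bordered Legendrian open books (bLobs), exploiting the locality of the generalized Lutz twist of Theorem~\ref{thm_main}.

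\emph{Step 1: Constructions.} Let $\xi_{\mathrm{std}}=\ker\{d\phi+\sum_{i=1}^n r_i^2\,d\theta_i\}$. Fix a small transverse circle $\Gamma_0\subset(\mathbb{R}^{2n+1},\xi_{\mathrm{std}})$ contained in a bounded region, and apply Theorem~\ref{thm_main} in a small tubular neighborhood $N(\Gamma_0)$ to obtain $\xi_1$, which admits a bLob inside $N(\Gamma_0)$ and agrees with $\xi_{\mathrm{std}}$ outside a compact set $K$. To build $\xi_\infty$, choose a sequence of mutually disjoint transverse circles $\Gamma_k\subset\mathbb{R}^{2n+1}$ with $\Gamma_k\subset\{|x|\ge k\}$, and pairwise disjoint tubular neighborhoods $N_k\subset\{|x|\ge k\}$. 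Apply the modification of Theorem~\ref{thm_main} inside each $N_k$; since the supports are pairwise disjoint, the resulting hyperplane distribution $\xi_\infty$ is a smooth contact structure, and it admits a bLob inside $N_k$ for every $k$. In particular, $\xi_\infty$ is \emph{overtwisted at infinity} in the sense of \cite{eptw,niedpres}.

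\emph{Step 2: $\xi_{\mathrm{std}}$ is distinct from $\xi_1$ and $\xi_\infty$.} Both $\xi_1$ and $\xi_\infty$ contain a bLob. On the other hand, $(\mathbb{R}^{2n+1},\xi_{\mathrm{std}})$ embeds as the complement of a point in the standard contact sphere $(S^{2n+1},\xi_{\mathrm{std}})$, which bounds the standard Stein domain $D^{2n+2}$. Since the presence of a bLob in any open subset of a closed contact manifold obstructs the existence of a (semi-positive) weak symplectic filling (Theorem~\ref{thm:mnw_wkfill}), no open subset of $(S^{2n+1},\xi_{\mathrm{std}})$ can contain a bLob. Hence $(\mathbb{R}^{2n+1},\xi_{\mathrm{std}})$ contains no bLob, and there can be no contactomorphism from $\xi_{\mathrm{std}}$ to either $\xi_1$ or $\xi_\infty$.

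\emph{Step 3: $\xi_1$ is distinct from $\xi_\infty$.} Suppose, for contradiction, that $\phi\colon(\mathbb{R}^{2n+1},\xi_1)\to(\mathbb{R}^{2n+1},\xi_\infty)$ is a contactomorphism. The compact set $K$ contains the support of the twist defining $\xi_1$, so $(\mathbb{R}^{2n+1}\setminus K,\xi_1)=(\mathbb{R}^{2n+1}\setminus K,\xi_{\mathrm{std}})$ embeds into $(S^{2n+1},\xi_{\mathrm{std}})$ and, by Step~2, contains no bLob. Since $\phi(K)$ is compact, we may choose $k$ so large that $N_k\subset\mathbb{R}^{2n+1}\setminus\phi(K)$. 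By construction, $N_k$ carries a bLob of $\xi_\infty$, so the open set $\phi(\mathbb{R}^{2n+1}\setminus K)=\mathbb{R}^{2n+1}\setminus\phi(K)$ contains a bLob of $\xi_\infty$. Pulling this bLob back by $\phi^{-1}$ produces a bLob of $\xi_1$ outside $K$, a contradiction.

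\emph{Main obstacle.} The delicate point is Step~3: one must know that the existence of a bLob inside an \emph{open subset} of a contact manifold obstructs weak fillability of that ambient manifold, so that the bLob in $\xi_\infty$ transferred via $\phi$ to $\xi_{\mathrm{std}}$ outside $K$ can be relocated into the standard sphere and contradict its Stein fillability. This is exactly the content of Theorem~\ref{thm:mnw_wkfill} applied to $(S^{2n+1},\xi_{\mathrm{std}})$ after extending past infinity. A minor secondary point is verifying that the infinite chain of disjointly supported modifications yielding $\xi_\infty$ remains genuinely a contact structure; this is automatic from the disjointness of supports and the smoothness of each individual twist.
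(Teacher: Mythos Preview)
Your argument is correct and complete. The construction of $\xi_\infty$ via an infinite disjoint family of twists along transverse circles works, and your distinguishing arguments in Steps~2 and~3 are sound; in particular the use of Theorem~\ref{thm:mnw_wkfill} is legitimate because the standard ball filling $(D^{2n+2},\omega_{\mathrm{std}})$ has exact symplectic form, so the hypothesis $\omega|_{TN}$ exact is automatic.

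The paper's route differs in one respect. Rather than performing infinitely many twists along disjoint circles, the paper (Subsection~\ref{sec:non-cpt}) defines a generalized Lutz twist along a single transverse \emph{line} $\R\subset(\R^{2n+1},\xi_{\mathrm{std}})$, using the confoliation form $\bar\omega_{\mathtt{tw}}=\prod_i(\cos r_i^2)\,dz+\sum_i(\sin r_i^2)\,d\theta_i$ in place of $\omtw$. This single non-compact twist already produces a bLob outside every compact set, yielding the overtwisted-at-infinity structure directly. The paper does not spell out the distinguishing argument, instead deferring to \cite{eptw} and \cite{niedpres}. Your approach has the advantage of relying only on the compact version of the twist (Theorem~\ref{thm_main}) as a black box, at the cost of a locally-finite gluing; the paper's approach is cleaner in that a single modification suffices, but requires redoing the Lutz-tube construction in the non-compact setting.
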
 

  We should mention some other attempts to generalize the Lutz twists 
to higher dimensions. 

  One is due to Etnyre and Pancholi~\cite{eptw}. 
 They constructed a modification of a contact structure 
so that it has a family of bordered Legendrian open books. 
 Their generalization takes an $n$-dimensional submanifold $B\times S^1$ 
of a $(2n+1)$-dimensional contact manifold $(M,\xi)$ 
as a generalization of a transverse knot, 
where $B$ is a closed $(n-1)$-dimensional isotropic submanifold 
with a trivial conformal symplectic normal bundle. 
 Then it modifies the contact structure $\xi$ along $B\times S^1$ 
so that it has an $S^1$-family of bordered Legendrian open books 
whose bindings are $B$. 
 Moreover, the modified contact structure is homotopic to the original $\xi$ 
as almost contact structures. 
 In this sense, it is a generalization of the $2\pi$-Lutz twist.  
 In order to avoid confusion, we call the generalization of the Lutz twist 
introduced in \cite{eptw} the \emph{Etnyre-Pancholi twist}\/ 
(the \emph{EP-twist}\/ for short) in this paper. 

  Another generalization is due 
to Massot, Niederkr\"uger, and Wendl~\cite{mnw}. 
 Their results are inspired by works of Mori~\cite{mori} 
on $5$-dimensional sphere. 
 Their generalization takes a certain $(2n-1)$-dimensional contact submanifold 
$N$ of a $(2n+1)$-dimensional contact manifold $(M,\xi)$ 
as a generalization of a transverse knot. 
 Then it modifies the contact structure $\xi$ along $N$ 
so that it has a family of bordered Legendrian open books 
with an $n$-dimensional parameter space in $N$. 
 The modified contact structure is homotopic to the original $\xi$ 
as almost contact structures. 
 They also generalize the $3$-dimensional Lutz twist 
along a pre-Lagrangian $2$-torus. 
 The generalization takes the so-called $\xi$-round hypersurface 
$H=K^{2n-1}\times S^1$
as a generalization of a pre-Lagrangian $2$-torus in dimension~$3$. 
(see Subsubsection~\ref{sec:xi-rdhyps} for definition). 
 Then it modifies $\xi$ along $H$ so that it has 
the bordered Legendrian open book or the Giroux domains
. 
 The generalization of this type is carefully studied by Kasuya~\cite{kasuya}. 
 In order to avoid confusion, we call the generalization of the Lutz twist 
introduced in \cite{mnw} the \emph{Massot-Niederkr\"uger-Wendl twist}\/ 
(the \emph{MNW-twist}\/ for short) in this paper. 

\smallskip

  This paper is organized as follows. 

  In Section~\ref{sec:prelim}, we review some basic notions in contact topology 
that are needed in the following discussions. 

  Section~\ref{sec:genlztw} is devoted to the generalization 
of the $3$-dimensional Lutz twist along a transverse knot. 
 The second half of Theorem~\ref{thm_main} is proved here. 
 First, we introduce the generalized open Lutz tube 
$\lft(S^1\times\R^{2n},\zeta\rgt)$, 
whose hyperplane field is a conductive confoliation, 
in Subsection~\ref{sec:gLtube}. 
 Then, in Subsection~\ref{sec:defGenLztw}, 
we define the generalized Lutz twist along a transverse circle. 
 The properties of the generalized $\pi$- and $2\pi$-Lutz twists are studied 
in Subsection~\ref{sec:handftwists}. 

  Generalizations of the $3$-dimensional Lutz twist along a pre-Lagrangian torus
is discussed in Section~\ref{sec:gtor}. 
 In Subsection~\ref{sec:circsph}, we define the generalized Lutz twist 
along a $\xi$-round hypersurface, and prove Theorem~\ref{thm:gdom-xird}. 
 In Subsection~\ref{sec:Gdom}, we define the generalized Lutz twist 
along a pre-Lagrangian $(n+1)$-dimensional torus, 
and prove Theorem~\ref{thm:gdom-pLag}. 

  We review the symplectic round handle and the contact round surgery 
in Section~\ref{sec:ctrdsurg}. 
 Especially, contact round surgery of index~$1$ 
is carefully observed, and that of index~$2n$ is defined here. 
 Then, by using these contact round surgeries 
the descriptions of the generalized Lutz twists are discussed 
in Section~\ref{sec:gentw}. 
 Theorem~\ref{thm:hdim} is proved here. 

  In Section~\ref{sec:newot}, we discuss the relation 
between the generalized Lutz twist and the new overtwisted discs 
introduced in~\cite{newOT}. 
 Then, we complete the proof of Theorem~\ref{thm_main}. 

\medskip

\noindent
\textbf{Acknowledgements.}
  The author is grateful to Yakov Eliashberg, John Etnyre, Takeo Noda, 
and Otto van~Koert for valuable discussions and helpful comments. 

\section{Preliminaries}\label{sec:prelim}
  In this section we introduce important notions and properties 
in contact topology 
which are needed in the rest of this paper. 

\subsection{Overtwistedness of contact structures on $3$-manifolds}
\label{sec:ovtw}
  We begin with a basic coarse classification 
of contact structures on $3$-manifolds. 
  It is well-known that contact structures on $3$-dimensional manifolds 
are divided into two contradictory classes, tight and overtwisted. 
 A contact structure $\xi$ on a $3$-dimensional manifold $M$ 
is said to be \emph{overtwisted \/} 
if there exists an embedded disk $D\subset M$ which is tangent to $\xi$ 
along its boundary: $T_{x}D=\xi_x$ at any point $x\in \rd D$. 
 A contact structure $\xi$ is said to be \emph{tight \/} 
if it is not overtwisted. 

  In this paper, we deal with higher dimensional cases as well. 
 A generalization of overtwisted disk to higher dimensions 
is introduced in \cite{mnw} (see Subsection~\ref{sec:bLob} in this paper). 
 Furthermore, a stronger generalization is introduced in~\cite{newOT} 
(see Section~\ref{sec:newot}). 
 In order to recognize the overtwisted disk as a part of the generalizations, 
we should define the original one more precisely. 

  For the precise definition, we need the notion of characteristic foliation 
of a surface in a contact $3$-manifold. 
 Let $F$ be a surface in a contact $3$-manifold $(M,\xi)$. 
 The \emph{characteristic foliation}\/ of $F$ with respect to $\xi$ 
is the $1$-dimensional foliation with singularity 
defined by the vector field $X$ defined as follows. 
 Let $\alpha$ be a contact form defining $\xi$ locally, 
and $\mu_F$ a volume form on $F$. 
 The vector field $X$ is defined by the equation 
$X\intp\mu_F=\alpha\vert_{TF}$. 
 Let $F_\xi$ denote the characteristic foliation. 
 In other words, 
the characteristic foliation $F_\xi$ 
is generated from the line field $\xi\cap TF$. 
 At singular points, $\xi$ and $TF$ coincide. 
 The notion, characteristic foliation, is also defined in higher dimensions. 
 The general definition is given in Subsubsection~\ref{sec:cvhsrf}. 

  Now, we define overtwisted disk. 
 When a contact structure $\xi$ on a $3$-dimensional manifold $M$ 
is overtwisted, 
we may take an embedded disk $D\subset(M,\xi)$ in the definition above 
as a disk whose characteristic foliation $D_\xi$ 
satisfies the following conditions 
(see Figure~\ref{fig:otdisk}): 
(1)~the boundary $\rd D$ is a set of singular points, 
(2)~there exists a unique isolated singular point on $D$, 
(3)~each leaf connects the singular point and the boundary, 
(4)~each leaf is transverse to the boundary. 
%
%
\begin{figure}[htb]
  \centering
  \includegraphics[height=3cm]{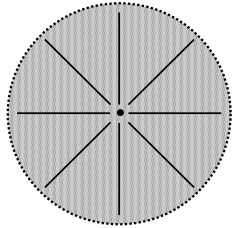}
  \caption{Overtwisted disk}
  \label{fig:otdisk}
\end{figure} 
 Although this definition of overtwisted disk looks stricter than the above, 
we can obtain this overtwisted disk from the embedded disk 
described in the definition above of overtwistedness 
by perturbing and taking a subset. 
 The overtwisted disc introduced in~\cite{newOT} can be taken 
in a tubular neighborhood of the overtwisted disk above 
(see Section~\ref{sec:newot}). 

  Tightness or overtwistedness of contact structures 
has an important relation with fillability by symplectic manifolds. 
 For a closed contact manifold of dimension $2n-1$, 
symplectic fillability is defined as follows. 
%
%
\begin{dfn}
  Let $(M,\xi)$ be a $(2n-1)$-dimensional closed contact manifold. 
Assume that the contact structure $\xi$ is cooriented. 
\begin{itemize}
\item $(M,\xi)$ is said to be \emph{weakly symplectically fillable}\/
  if there exists a $(2n)$-dimensional compact symplectic manifold $(W,\omega)$ 
  which satisfies (i)~$\rd W=M$ as oriented manifolds, 
  and (ii)~$\omega^{n-1}\vert_\xi>0$. 
\item $(M,\xi)$ is said to be \emph{strongly symplectically fillable}\/
  if there exist a compact symplectic manifold $(W,\omega)$ 
  and a Liouville vector field $X$ (i.e. $L_X\omega=\omega$) 
  defined near $\rd W$ pointing outward transversely to $\rd W$ 
  that satisfy (i)~$\rd W=M$, and (ii)~$\xi=\ker(X\intp\omega)\vert_{TM}$. 
\end{itemize}
\end{dfn}
\noindent
 It is not hard 
to check that a strongly symplectically fillable contact manifold 
is weakly symplectically fillable. 
 For $3$-dimensional contact manifolds, 
there exists an important property 
proved by Eliashberg~\cite{elifill} and Gromov~\cite{gromov}. 
%
%
\begin{thrm}[Eliashberg, Gromov]\label{thm:eligro}
  Let $(M,\xi)$ be a contact $3$-manifold 
which is weakly symplectically fillable. 
 Then the contact structures $\xi$ is tight. 
\end{thrm}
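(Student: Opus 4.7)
The plan is to argue by contradiction via Gromov's theory of pseudoholomorphic discs, essentially following Eliashberg's original argument. Assume that $\xi$ is overtwisted, and let $D \subset M$ be an overtwisted disc whose characteristic foliation $D_\xi$ has a unique elliptic singularity $p \in \inter D$, with all leaves connecting $p$ to the singular circle $\rd D$ transversely, as in the excerpt. Let $(W,\omega)$ be a weak symplectic filling of $(M,\xi)$; the strategy is to produce a one-dimensional moduli space of $J$-holomorphic discs in $W$ with boundary on $D$, and to derive a topological contradiction from its structure.

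First I would choose an $\omega$-tame almost complex structure $J$ on $W$ whose restriction to $\xi$ on $\rd W = M$ is compatible with the conformal symplectic structure induced by $\omega|_\xi$; the weak filling condition $\omega|_\xi > 0$ is precisely what makes this possible. In dimension four one can moreover exploit a standard trick of Eliashberg to modify $\omega$ in a collar of $\rd W$ so that the filling becomes of contact type there, which is where the $3$-dimensional hypothesis on $M$ is essentially used: on a $3$-manifold one can adjust $\omega|_{TM}$ by a suitable exact form to obtain a contact-type primitive. With such data, $M$ is $J$-convex in the sense needed for the boundary maximum principle.

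Next, near the elliptic singularity $p$, Bishop's theorem produces a local one-parameter family of small embedded $J$-holomorphic discs $u_t \colon (D^2,\rd D^2) \to (W,D)$ with $u_0 \equiv p$. I would embed this family into the moduli space $\mathcal{M}$ of $J$-holomorphic discs with boundary on $D$ in the Bishop homotopy class, and, for generic $J$, $\mathcal{M}$ is a smooth one-dimensional manifold with a single boundary point corresponding to the constant disc at $p$. Following the connected component of $\mathcal{M}$ containing the Bishop family, the classification of one-manifolds forces this component either to close up, to end at another boundary point, or to escape compactness.

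The heart of the argument, and the step I expect to be the main obstacle, is Gromov compactness in this relative setting. Sphere bubbling is ruled out by an a priori $\omega$-energy bound inherited from the Bishop family together with $\omega$-tameness of $J$. Disc bubbles with boundary on $D$ would carry positive symplectic area, and a careful accounting of area and Maslov index excludes them as well. The genuinely delicate case is boundary escape: one must show that the boundary circles $u_t(\rd D^2) \subset D$ cannot accumulate on the singular circle $\rd D$. This is where the geometry of the overtwisted disc enters crucially, via a reflection-principle argument using the transversality of the leaves of $D_\xi$ to $\rd D$ and the $J$-convexity of $M$ to forbid a holomorphic disc from tangentially touching the closed leaf $\rd D$. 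Once compactness is established, $\mathcal{M}$ is a compact one-manifold with exactly one boundary point, which is impossible; the resulting contradiction forces $\xi$ to be tight.
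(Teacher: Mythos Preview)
The paper does not give a proof of this theorem. It is quoted as a known result, attributed to Eliashberg and Gromov with references \cite{elifill} and \cite{gromov}, and is used only as background to motivate the notion of PS-overtwistedness. So there is no proof in the paper to compare your proposal against.

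That said, your outline is essentially the standard Bishop-family argument, and the strategy is correct. A couple of small points worth tightening if you want this to stand alone. First, your invocation of the ``Eliashberg trick'' to deform a weak filling to a strong one near the boundary is fine in dimension four, but it is not needed for the argument to go through: the weak condition $\omega|_\xi>0$ already gives $J$-convexity of $M$ for a suitable $J$, and the Bishop family runs without passing to a strong filling. Second, the sentence ``Sphere bubbling is ruled out by an a priori $\omega$-energy bound'' is not quite how it works: an energy bound alone does not preclude sphere bubbles. What rules them out here is exactness of $\omega$ on the relevant $\pi_2$ classes (or, after passing to a strong filling, the aspherical modification), together with the topology of the Bishop family. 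Finally, the compactness step you flag as delicate --- preventing the boundaries from reaching the Legendrian circle $\partial D$ --- is indeed the crux, and in practice one handles it by a local normal-form computation near $\partial D$ rather than a bare reflection principle; you might want to be more explicit there.
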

\noindent
 This implies 
that if there exists an overtwisted disk in a contact $3$-manifold, 
it never has any weak symplectic filling. 

\subsection{Lutz twist and Giroux torsion}\label{sec:lutztwist}
  We review Lutz twist and Giroux torsion in this subsection. 
 To generalize these notions to higher dimensions 
is one of main purposes of this paper. 
 Then to review the original definition 
for contact structures on $3$-dimensional manifolds is important. 
 Lutz twist is a modification of a contact structure 
on a $3$-dimensional manifold. 
 It is introduced by Lutz~\cite{lutz} to study contact structures 
which are not equivalent to the standard one. 
 The Lutz twist is operated along a transverse knot. 
 A similar twist along a pre-Lagrangian torus is also called a Lutz twist. 
 The second one makes the notion of Giroux torsion~\cite{giroux99}. 

  First, we introduce the Lutz twist along a transverse knot. 
 Let $\Gamma$ be a transverse knot in a contact $3$-manifold $(M,\xi)$. 
 Then it is well known (see \cite{geitext} for example) 
that it has the standard tubular neighborhood  $U\subset(M,\xi)$ 
which is contactomorphic to a tubular neighborhood of 
$S^1\times\{0\}\subset(S^1\times\R^2,\xi_0)$, 
where $\xi_0:=\ker\lft\{d\phi+r^2d\theta\rgt\}$ and $(\phi,r,\theta)$ 
are the cylindrical coordinates of $S^1\times\R^2$. 
 We may regard $U$ contactomorphic to $(S^1\times D^2(\delta),\xi_0)$ 
for some sufficiently small $\delta>0$, 
where $D^2(\delta)$ is an unit disk with radius $\delta$. 
 In order to define the Lutz twist, we work on $S^1\times D^2(\delta)$. 
 Take a $1$-form in the form $f(r)d\phi+g(r)d\theta$, 
for some functions $f(r)$, $g(r)$. 
 It is a contact form if the curve $(g(r),f(r))$ in $(g,h)$-plane 
rotates around the origin with respect to $r$ (see Figure~\ref{fig:LzTw}-(I)). 
%
%
\begin{figure}[htb]
  {\scriptsize \includegraphics[height=4.8cm]{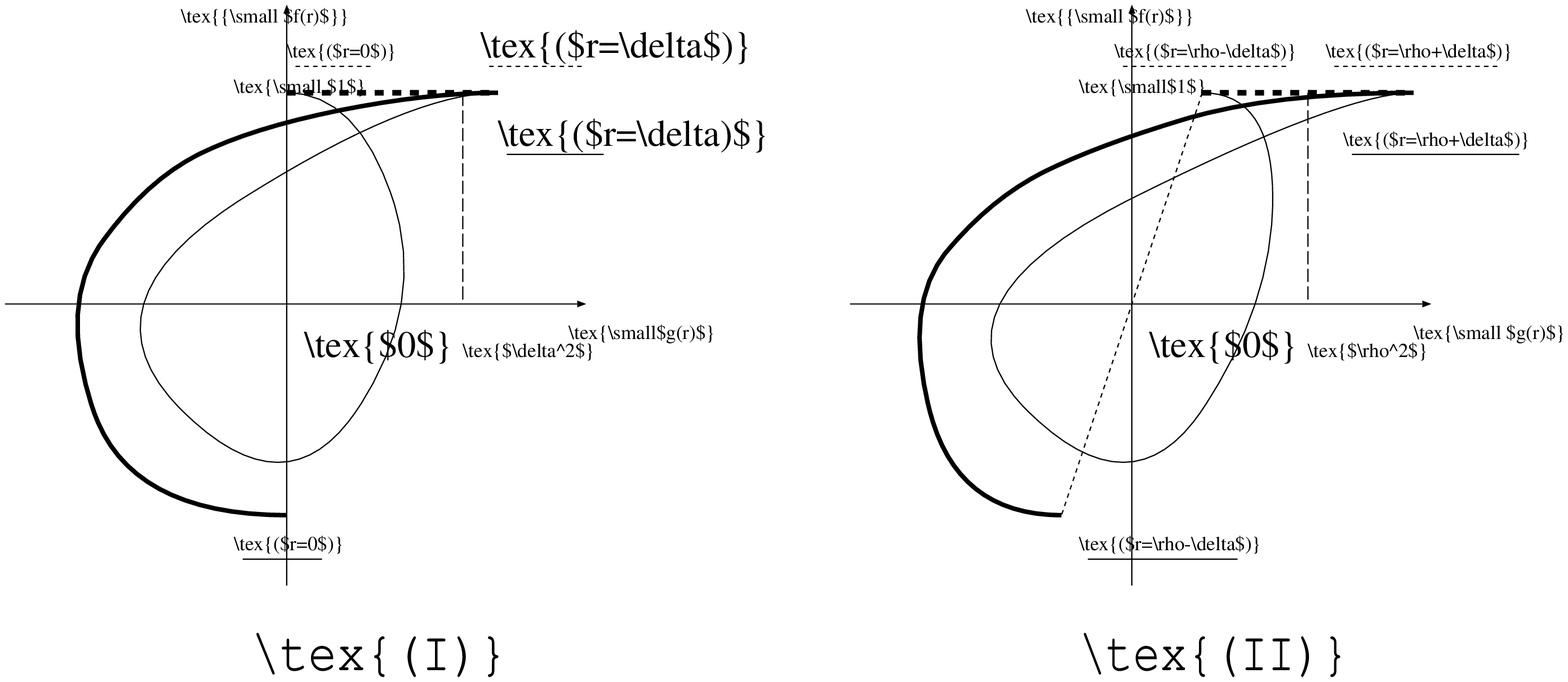}}
  \caption{Lutz twisting}
  \label{fig:LzTw}
\end{figure} 
 The standard contact structure on $S^1\times\R^2$ is represented 
by the dotted horizontal segment in Figure~\ref{fig:LzTw}-(I). 
 Then we have two functions $f(r)$, $g(r)$ 
so that the curve $(g(r),f(r))$ 
is a solid thick curve in Figure~\ref{fig:LzTw}-(I). 
 In other words, we have a contact structure 
$\xi':=\ker\lft\{f(r)d\phi+g(r)d\theta\rgt\}$. 
 Replacing the given contact structure $\xi$ on $U$ 
with the contact structure corresponding to $\xi'$, 
we obtain a new contact structure $\tilde\xi$ on $M$. 
 This operation, 
modifying the given contact structure $\xi$ along a transverse knot $\Gamma$, 
is called the $\pi$-\emph{Lutz twist}\/ along $\Gamma$. 
 By taking a thin curve in Figure~\ref{fig:LzTw}-(I) 
instead of the thick curve above, 
we can define the $2\pi$-\emph{Lutz twist}\/ along $\Gamma$. 

  A Lutz twist along a transverse curve makes a contact structure 
be overtwisted. 
 In fact, by definition, we have a tubular neighborhood $U$ 
of the transverse curve which is contactomorphic to 
$\lft(S^1\times D(\delta), \xi'=\ker\{f(r)d\phi+g(r)d\theta\}\rgt)$, 
where the functions $f(r)$, $g(r)$ are given in Figure~\ref{fig:LzTw}-(I). 
 For the thick curve in Figure~\ref{fig:LzTw}-(I) (or a $\pi$-Lutz twist), 
there exists $\epsilon$, $0<\epsilon<\delta$, 
where the curve intersects with the vertical axis 
(i.e.\ $g(\epsilon)=0,\ f(\epsilon)>0$). 
 Then the disk 
$D:=\{r\le\epsilon,\ \theta=\bar\theta\}\subset(S^1\times D(\delta),\xi')$ 
is an overtwisted disk for any constant $\bar\theta\in S^1$. 
 For the thin curve (or a $2\pi$-Lutz twist), 
there exist two points $\epsilon_1,\ \epsilon_2$, 
$0<\epsilon_1<\epsilon_2<\delta$, 
where the curve intersects with the vertical axis. 
 The disk $\tilde D:=\{r\le\epsilon_1,\theta=\bar\theta\}$ 
is an overtwisted disk. 
 According to Theorem~\ref{thm:eligro}, we may say 
that the Lutz twist makes a contact manifold be symplectically non-fillable. 

  Next, we review the notion of the Giroux torsion. 
 We begin with the Lutz twist along a pre-Lagrangian torus. 
 Let $T$ be a pre-Lagrangian torus in a contact $3$-manifold $(M,\xi)$. 
 A \emph{pre-Lagrangian torus}\/ is an embedded $2$-dimensional torus 
with a linear characteristic foliation $T_\xi$ with closed leaves. 
 Then it is well known (see \cite{geitext}, \cite{giroux99}) 
that it has a tubular neighborhood which is contactomorphic 
to a tubular neighborhood of $\{r=\rho\}\subset(S^1\times\R^2,\xi_0)$, 
where the torus $\{r=\rho\}$ has the same characteristic foliation as $T$. 
 We may regard the tubular neighborhood as 
$\lft(S^1\times \{D(\rho+\delta)\setminus\inter D(\rho-\delta)\},\xi_0\rgt)$. 
 The $\pi$-\emph{Lutz twist along}\/ $T\subset(M,\xi)$ is defined 
as the operation replacing the tubular neighborhood of $T$ 
with $S^1\times \{D(\rho+\delta)\setminus\inter D(\rho-\delta)\}$ 
with the contact structure correspondent to the thick curve 
in Figure~\ref{fig:LzTw}-{(II)}. 
 By using the thin curve in Figure~\ref{fig:LzTw}-{(II)} 
instead of the thick one, 
we can define the $2\pi$-\emph{Lutz twist along}\/ $T$. 

  Then we introduce the notion of the Giroux torsion. 
 A contact manifold $(M,\xi)$ is said 
to have the \emph{Giroux torsion at least} $n\in\mathbb{N}$ 
if there exists a contact embedding 
$f_n\colon(T^2\times I, \zeta_n)\to(M,\xi)$, 
where $\zeta_n$ is a contact structure on $T^2\times I$ with coordinates 
$(\phi,r,\theta)\in S^1\times I\times S^1\subset S^1\times\mathbb{R}^2$ 
defined as 
\begin{equation*}
  \zeta_n:=\ker\{\cos(2n\pi r)d\theta+\sin(2n\pi r)d\phi\}.   
\end{equation*}
 The supremum of these numbers for all possible such embeddings 
to the contact manifold $(M,\xi)$ 
is called the \emph{Giroux torsion}\/ of $(M,\xi)$. 
 If there exists no such embedding, 
$(M,\xi)$ is said to have the Giroux torsion $0$. 
 The definition can be extended for half-integers $n=m/2$, $m\in\mathbb{N}$. 
 Especially, for $n=1/2$, 
we call the image of the contact embedding 
$f_{1/2}\colon(T^2\times I, \zeta_{1/2})\to(M,\xi)$ 
the \emph{Giroux $\pi$-torsion domain}. 

  It is clear that the Giroux torsion and the Lutz twist along a torus 
are closely related. 
 The contact form defining the contact structure $\zeta_n$ in the definition of 
the Giroux torsion corresponds to a curve in $\lft(g(r),f(r)\rgt)$-plane, 
like Figure~\ref{fig:LzTw}-{(II)}, 
rotating around the origin $n$ times. 
 This implies that the $2\pi$-Lutz twist makes a contact structure 
have the Giroux torsion at least $1$. 

  At the end of this subsection, 
we review the important properties of the Giroux torsion. 
 The existence of the Giroux torsion domain does not imply overtwistedness. 
 However, it is proved by Gay~\cite{gay} 
that if the Giroux torsion is greater than or equal to $1$, 
the contact manifold is not strongly symplectically fillable. 

\subsection{Bordered Legendrian open book and Giroux domain}
\label{sec:bLob_Gdom}
  We introduce two notions, bordered Legendrian open book and Giroux domain 
in this subsection. 
 They are defined in \cite{mnw} for higher-dimensional contact manifolds 
as generalizations of certain important notions 
in $3$-dimensional contact topology. 

\subsubsection{Bordered Legendrian open book}\label{sec:bLob}
 A bordered Legendrian open book is introduced in \cite{mnw} 
as a higher-dimensional generalization 
of overtwisted disk in dimension~$3$. 
 It is defined as follows. 
 Let $(M,\xi)$ be a contact $(2n+1)$-dimensional manifold. 
%
%
\begin{dfn}
  A compact $(n+1)$-dimensional submanifold  $N\subset(M,\xi)$ with boundary 
is called a \emph{bordered Legendrian open book} (\textsf{bLob} for short) 
if there exists a pair $(B,\theta)$ 
of $(n-1)$-dimensional closed submanifold $B\subset\inter N$ 
with trivial normal bundle 
and a mapping $\theta\colon N\setminus B\to S^1$ 
which satisfies the following conditions: 
\begin{itemize}
\item $\theta\colon N\setminus B\to S^1$ is a fibration 
  \begin{itemize}
  \item that coincide, 
    in a tubular neighborhood $B\times D^2$ of $B=B\times\{0\}$, 
    with $(b,r,\phi)\mapsto \phi$, 
    where $(r,\phi)$ is the polar coordinates of $D^2$ and $b\in B$ is a point, 
  \item whose fibers are transverse to $\partial N$ in $N$, 
  \end{itemize}
\item fibers $\theta^{-1}(s)\subset(M,\xi)$ of 
  $\theta\colon N\setminus B\to S^1$, $s\in S^1$, are Legendrian, 
\item the boundary $\partial N\subset(M,\xi)$ is Legendrian. 
\end{itemize}
\end{dfn}

  In dimension $3$ (i.e. $n=1$), a bordered Legendrian open book 
is an overtwisted disk. 
 In fact, a $0$-dimensional closed manifold $B$ is a point. 
 Then a $2$-dimensional manifold $N$ is a $2$-dimensional disk $D$ 
with the Legendrian boundary, 
and the fibers of $\phi\colon D\setminus\{pt\}\to S^1$ 
are leaves of the characteristic foliation $D_\xi$.  

  An important property of bordered Legendrian open book 
is proved in \cite{mnw} (see also \cite{niedPS},\cite{gromov}). 
 It is a reason why a bordered Legendrian open book can be considered 
as a generalization of an overtwisted disk. 

%
%
\begin{thrm}[Massot, Niederkr\"uger, Wendl]\label{thm:mnw_wkfill}
  If a closed contact manifold has a bordered Legendrian open book $N$, 
then it can not have any semi-positive symplectic filling $(W,\omega)$ 
which satisfies that $\omega\vert_{TN}$ is exact. 
\end{thrm}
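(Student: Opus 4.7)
The plan is to obstruct the hypothetical filling by the pseudoholomorphic disk method of Gromov, following the scheme developed by Niederkr\"uger for the plastikstufe and adapted by Massot--Niederkr\"uger--Wendl to the bLob setting. I would argue by contradiction. Suppose $(W,\omega)$ is a semi-positive symplectic filling of the closed contact manifold $(M,\xi)$ that contains the bLob $N$, and that $\omega|_{TN} = d\lambda$ for some $1$-form $\lambda$ on $N$. The first step is to choose an almost complex structure $J$ on $W$ that is tamed by $\omega$, adapted to $\xi$ near $\partial W = M$ in the usual cylindrical sense, and for which $N$ is totally real. The dimension count matches ($\dim_\R N = n+1$ is the complex dimension of $W$), and the Legendrian character of both the pages $\theta^{-1}(s)$ and the boundary $\partial N$ makes this choice possible.

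Next I would construct a Bishop family of small $J$-holomorphic disks with boundary on $N$, emanating from the binding $B$. In the model neighborhood $B \times D^2 \subset N$ where $\theta$ is the angular projection, one applies the standard Bishop--Hofer construction fiberwise along $\{b\} \times D^2$ to obtain a smooth family of disks parametrized by a point of $B$ and a size parameter, each collapsing to its basepoint in $B$ as the size shrinks. These disks seed a moduli space $\mathcal{M}$ of $J$-holomorphic disks $u\colon(D^2,\partial D^2)\to(W,N)$ in the prescribed relative homotopy class, which for generic $J$ is a smooth manifold of the expected dimension. I would cut $\mathcal{M}$ down by suitable boundary evaluation conditions so that the reduced moduli space $\mathcal{M}'$ is one-dimensional, with the Bishop collapse at $B$ contributing a single endpoint.

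The heart of the argument is showing $\mathcal{M}'$ is compact apart from this Bishop end. Exactness of $\omega|_{TN}$ gives the uniform area bound $\int u^*\omega = \int_{\partial D^2} u^*\lambda$ via Stokes, so there is an a priori energy estimate. Semi-positivity, combined with generic choice of $J$, excludes sphere bubbling in codimension at most one. Disk bubbling with boundary on the Legendrian $\partial N$ is excluded because a $J$-holomorphic disk with boundary in a Legendrian has vanishing symplectic area (a primitive of $\omega$ on a Weinstein neighborhood of $\partial N$ pulls back to zero along a Legendrian), hence must be constant by positivity of energy; a similar argument rules out non-Bishop bubbles collapsing onto $B$. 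After Gromov compactification, $\mathcal{M}'$ is a compact $1$-manifold whose only boundary is the single Bishop endpoint. Since a compact $1$-manifold must have an even number of boundary points, this is the desired contradiction.

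The hard part will be the compactness and transversality analysis, in particular ruling out subtle degenerations where a disk boundary grazes $\partial N$ or concentrates near $B$, and confirming that semi-positivity is strong enough to prevent multiply covered sphere bubbles from contaminating the generic structure of $\mathcal{M}$. The exactness hypothesis enters in exactly one but crucial place, namely securing the energy bound from which Gromov compactness is deduced; without it, sequences of disks in $\mathcal{M}$ could have unbounded area and the topological contradiction could not be extracted.
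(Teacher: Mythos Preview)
The paper does not give its own proof of this statement: Theorem~\ref{thm:mnw_wkfill} is quoted from Massot--Niederkr\"uger--Wendl~\cite{mnw} (building on~\cite{niedPS},~\cite{gromov}) as background material, with no argument supplied beyond the attribution and a remark that the cited work proves stronger results. So there is nothing in the paper to compare your proposal against.

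That said, your sketch is a faithful outline of the actual proof strategy in~\cite{niedPS} and~\cite{mnw}: seed a Bishop family of $J$-holomorphic disks along the binding $B$, cut the moduli space down to a one-dimensional family, use the exactness of $\omega|_{TN}$ for the energy bound, invoke semi-positivity plus genericity to exclude sphere bubbles, use the Legendrian boundary condition to exclude disk bubbles, and derive a parity contradiction from a compact $1$-manifold with a single boundary point. The places you flag as delicate (transversality, boundary degenerations near $\partial N$ and $B$, multiply covered spheres) are exactly the technical issues handled in those references. If you were asked to reproduce the proof rather than cite it, this outline would be the right skeleton; but for the purposes of this paper, the correct response is simply that the theorem is cited without proof.
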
 
\noindent
 A $2n$-dimensional symplectic manifold $(W,\omega)$ 
is said to be \emph{semi-positive}
if there exists no element $A\in\pi_2(W)$ 
which satisfies $\langle A,[\omega]\rangle>0$ 
and $3-n\le\langle A,c_1(W)\rangle<0$, 
where $c_1(W)\in H^2(W:\R)$ is the first Chern class 
and $A$ is regarded as $S\in H_2(W;\R)$ by the Hurewicz homomorphism. 
 Note that any symplectic manifold of dimension less than or equal to $6$ 
is semi-positive. 
 According to \cite{mnw}, the condition ``semi-positive'' should be removable 
in the future. 
%
%
\begin{rmk}
  In \cite{mnw}, Massot, Niederkr\"uger, and Wendl proved much stronger results 
than the above. 
 They introduced a new notion, weak symplectic fillability, 
for higher-dimensional cases. 
 They obtained the above result for weak symplectic filling. 
\end{rmk}

  By comparing this theorem with Theorem~\ref{thm:eligro}, 
a contact manifold which contains a bordered Legendrian open book 
is said to be \emph{PS-overtwisted}. 
 ``PS'' comes from ``plastikstufe'', the name of the former version 
of bordered Legendrian open book (see \cite{niedPS}). 

\subsubsection{Giroux domain}\label{sec:defGdom}
  A Giroux domain is introduced in \cite{mnw} 
as a generalization of the Giroux $\pi$-torsion domain 
(see Subsection~\ref{sec:lutztwist}). 
 According to \cite{mnw}, this notion is due to Giroux. 

  The definition of the Giroux domain is based on another important notion, 
the ideal Liouville domain. 
%
%
\begin{dfn}[Giroux]
  Let $\Sigma$ be a compact manifold with boundary, 
$\omega$ an exact symplectic structure 
on the interior $\inter \Sigma$ of $\Sigma$, 
and $\xi$ a contact structure on the boundary $\rd\Sigma$. 
 The triple $\lft(\Sigma,\omega,\xi\rgt)$ 
is called an \emph{ideal Liouville domain}\/ 
if there exists a Liouville $1$-form $\beta$ for $\omega$ (i.e. $d\beta=\omega$)
defined on $\inter\Sigma$ 
which satisfies the following condition: 
for any smooth function $f\colon\Sigma\to[0,\infty)$ which has $\rd\Sigma$ 
as a regular level set $f^{-1}(0)=\rd\Sigma$, 
the $1$-form $f\beta$ on $\inter\Sigma$ extends to $\tilde\beta$ on $\Sigma$ 
smoothly so that $\ker\tilde\beta\vert_{T(\rd\Sigma)}=\xi$. 
\end{dfn}

  Now, the Giroux domain is defined 
as a contactization of the ideal Liouville domain. 
 Let $(\Sigma,\omega,\xi)$ be an ideal Liouville domain, 
and $\beta$ a Liouville $1$-form for $\omega$ appeared in the definition. 
 For a function $f\colon\Sigma\to[0,\infty)$ 
with a regular level set $f^{-1}(0)=\rd\Sigma$, 
we have a contact form $fdt+\tilde\beta$ on $\Sigma\times\R$, 
where $t\in\R$ is a coordinate and $\tilde\beta$ is an extension of $f\beta$. 
 Since the contact form is independent of $t\in\R$, 
we may regard it as a contact form on $\Sigma\times S^1$. 
 The contact manifold $\lft(\Sigma\times S^1,\ker(fdt+\tilde\beta)\rgt)$ 
is called a \emph{Giroux domain}\/ associated to the Liouville domain 
$(\Sigma,\omega,\xi)$. 

  The most basic example is the Giroux $\pi$-torsion domain. 
%
%
\begin{expl}\label{expl:annumodel}
 Let $\Sigma=S^1\times[0,\pi]$ be an annulus with coordinates $(\theta,s)$, 
$\omega=(1/\sin^2s)d\theta\wedge ds$ a symplectic structure 
defined on $\inter \Sigma=S^1\times(0,1)$, 
and $\xi=\ker d\theta$ a contact structure on $\rd\Sigma=S^1\times\{0,\pi\}$. 
 Then $(\Sigma,\omega,\xi)$ is an ideal Liouville domain 
with a Liouville form $\beta=(\cot s)\; d\theta$ (see \cite{mnw}). 
 For a function $f\colon\Sigma\to\R$, $(\theta,s)\mapsto\sin s$, 
the $1$-form $f\beta=(\cos s)\; d\theta$ can be defined on $\Sigma$, 
and $f\beta\vert_{T(\rd\Sigma)}=\pm d\theta$. 
 Therefore, the Giroux domain associated to $(\Sigma,\omega,\xi)$ is 
\begin{equation*}
  \lft(\Sigma\times S^1,\ker(fdt+f\beta)\rgt)
  =\lft(S^1\times[0,\pi]\times S^1,\ker\{(\sin s)dt+(\cos s)d\theta\}\rgt),
\end{equation*}
which is nothing but the Giroux $\pi$-torsion domain. 
\end{expl}

  The Giroux domain is introduced in \cite{mnw} to discuss Giroux torsion 
in higher dimensions. 
 The following property is proved in \cite{mnw}. 
%
%
\begin{thrm}[Massot, Niederkr\"uger, Wendl]\label{thm:mnw_gdom}
  Let $(M,\xi)$ be a closed contact manifold of dimension $2n+1$. 
 Suppose that $(M,\xi)$ contains subdomain $N$ with boundary 
consisting of two Giroux domains $\Sigma^+\times S^1$ and $\Sigma^-\times S^1$ 
of dimension $2n+1$. 
 Further, assume that $\rd(\Sigma^-\times S^1)=\rd\Sigma^-\times S^1$ 
has a connected component actually glued to a connected component 
of $\rd(\Sigma^+\times S^1)$ 
and a connected component which never intersect with $\Sigma^+\times S^1$. 
 Then $(M,\xi)$ has no semi-positive strong symplectic filling. 
\end{thrm}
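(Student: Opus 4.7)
The plan is to argue by contradiction, supposing that $(W,\omega)$ is a semi-positive strong symplectic filling of $(M,\xi)$ and then producing $J$-holomorphic curves whose existence is topologically incompatible with the geometry of the two glued Giroux domains. This parallels Gromov's original scheme for ruling out fillings of overtwisted $3$-manifolds and Gay's extension to Giroux torsion, but carried out in the SFT-compactness framework required by higher dimensions.

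First I would complete $(W,\omega)$ by attaching the positive symplectization end $\bigl(M\times[0,\infty),\,d(e^{t}\alpha)\bigr)$ to obtain a non-compact symplectic manifold $(\hat W,\hat\omega)$. Inside $N\subset M\subset\hat W$ I would locate a family of pre-Lagrangian submanifolds supplied by the product structure of the Giroux domains: given a Legendrian submanifold $\Lambda$ lying in a regular level set $\{f=c\}$ near $\rd\Sigma^{-}$, the product $L=\Lambda\times S^{1}$ is pre-Lagrangian in $\Sigma^{-}\times S^{1}$. I would choose $\Lambda$ on the portion of $\Sigma^{-}$ adjacent to the free boundary component, so that $L$ lies entirely on the side of $N$ separated from $\Sigma^{+}\times S^{1}$ by that free component.

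Next, equip $\hat W$ with an $\hat\omega$-compatible almost complex structure $\hat J$ that is cylindrical at infinity, restricts to the canonical complex structure of the Giroux domain in a neighborhood of $L$, and is generic elsewhere. Near the free boundary component, the Reeb vector field is tangent to the $S^{1}$-factor and the ideal Liouville structure produces a canonical Bishop family of $\hat J$-holomorphic disks with boundary on $L$ emanating from the locus where $f$ vanishes. I would then study the connected component $\mathcal{M}$ of the moduli space of $\hat J$-holomorphic disks with boundary on $L$ containing this Bishop family; generically, $\mathcal{M}$ is a smooth $1$-manifold with the Bishop family as one end.

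The crux is Gromov/SFT compactness applied to $\mathcal{M}$. Semi-positivity rules out sphere bubbling in codimension less than two; the Liouville structure at infinity prevents curves from escaping into the symplectization end via a monotonicity estimate; and a maximum-principle argument, using that the Reeb flow on $\Sigma^{+}\times S^{1}$ is tangent to the $S^{1}$-direction, prevents disks from crossing the glued interface into $\Sigma^{+}\times S^{1}$. Consequently $\mathcal{M}$ is compact, giving a compact $1$-manifold whose topological boundary is a single Bishop end, which is impossible. The main obstacle, and the technical heart of the argument, is organizing this compactness and exclusion of broken configurations: specifically, one must exploit the hypothesis that $\Sigma^{-}$ has a boundary component \emph{not} glued to $\Sigma^{+}$ to confine the disks to a region whose topology admits no alternative boundary for $\mathcal{M}$, and one must verify that the asymptotic Reeb orbits arising in any limit building are incompatible with the homotopy class represented by the Bishop disks.
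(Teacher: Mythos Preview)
The paper does not contain a proof of this statement. Theorem~\ref{thm:mnw_gdom} is quoted from \cite{mnw} as background, and the only commentary the paper adds is the remark immediately following it, noting that Massot, Niederkr\"uger, and Wendl in fact prove stronger results. There is therefore nothing in the present paper to compare your argument against.

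As for the sketch itself: the overall architecture you describe---completing the filling, seeding a Bishop family of holomorphic disks with boundary on a pre-Lagrangian torus near the free boundary component of $\Sigma^{-}\times S^{1}$, and deriving a contradiction from compactness of a one-dimensional moduli space---is indeed the shape of the argument in \cite{mnw}. A few points would need sharpening before this becomes a proof. First, the Bishop family in \cite{mnw} is not anchored on an arbitrary pre-Lagrangian $\Lambda\times S^{1}$; it uses a specific model near the ideal boundary (where $f\to 0$) in which the Liouville structure degenerates and one has an explicit local family of disks. Second, your maximum-principle step (``disks cannot cross into $\Sigma^{+}\times S^{1}$'') is not how the glued boundary component is actually used: the role of the component of $\rd\Sigma^{-}$ glued to $\Sigma^{+}$ is to ensure that the relevant cohomological obstruction class from \cite{mnw} is nontrivial, so that the moduli space cannot close up by disks shrinking to points on the other side. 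Simply invoking a barrier at the interface is not enough; the interface is a $\xi$-round hypersurface and holomorphic curves can in principle intersect it. Third, the exclusion of breaking into Reeb orbits requires more than ``monotonicity at infinity'': one needs the specific structure of the Reeb dynamics on the boundary of a Giroux domain (the Reeb field is tangent to the $S^{1}$-factor there) to rule out punctures asymptotic to short orbits. These are exactly the technical ingredients worked out in \cite{mnw}, so if you want to present this as a proof you should cite and invoke them precisely rather than gesture at SFT compactness in general.
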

%
%
\begin{rmk}
  In \cite{mnw}, Massot, Niederkr\"uger, and Wendl proved much stronger results 
than the above. 
 They discussed the relation between the Giroux domain 
and symplectic fillabilities for higher-dimensional cases. 
\end{rmk}

\subsection{Contact structures on non-compact manifolds}
\label{sec:atinfty}
  In this subsection, we deal with contact structures on non-compact manifolds. 
 We introduce a new notion, generalizing a notion for contact structures 
on non-compact $3$-manifolds. 

  Contact structures on $\R^3$ were classified by Eliashberg in \cite{eliashr3}.
 The notions ``overtwisted at infinity'' and ``tight at infinity'' 
were introduced in the paper. 
 A contact structure $\xi$ on a non-compact $3$-dimensional manifold $M$ 
is said to be \emph{overtwisted at infinity}\/ 
if, for any relatively compact subset $U\subset M$, 
there exists an overtwisted disk in $(M\setminus U,\xi|_{M\setminus U})$. 

  Now that the notion, overtwistedness, is generalized 
to higher dimensions in~\cite{newOT}, 
the notion, overtwistedness at infinity, is defined in higher dimensions 
in the same way. 

  Similarly we can define such a notion for the PS-overtwistedness. 
%
%
\begin{dfn}
  A contact structure $\xi$ on a non-compact manifold $M$ 
is said to be \emph{PS-overtwisted at infinity}\/ 
if, for any relatively compact subset $U\subset M$, 
there exists an overtwisted disk in $(M\setminus U,\xi|_{M\setminus U})$. 
\end{dfn} 

  Examples of contact structures overtwisted at infinity 
on $\R^{2n+1}$ are constructed in \cite{niedpres} and \cite{eptw}. 
 A more natural example is given in Subsection~\ref{sec:non-cpt} 
by the method in this paper. 
 By the method in this paper, we obtain a contact structure on $\R^{2n+1}$ 
overtwisted at infinity. 

\subsection{Neighborhood theorems for Submanifolds 
  in contact manifolds}\label{sec:sbminct}
  In this subsection, we review some results concerning certain submanifolds 
in contact manifolds. 
 Neighborhood theorems for some specific submanifolds are introduced. 
 First, we discuss isotropic submanifolds. 
 And then we deal with contact submanifolds. 
 $\xi$-round hypersurfaces is an important notion introduced in \cite{mnw}. 
 Then we introduce pre-Lagrangian submanifolds. 
 We review convex hypersurfaces as well. 

\subsubsection{Isotropic submanifold}\label{sec:isot}
  An isotropic submanifold in a contact manifold is defined as follows. 
 Let $(M,\xi)$ be a $(2n+1)$-dimensional contact manifold, 
and $\alpha$ a defining $1$-form of $\xi$: $\xi=\ker\alpha$. 
 A submanifold $N\subset(M,\xi)$ is said to be \emph{isotropic\/} 
if the pullback $i^\ast\alpha$ of $\alpha$ 
by the inclusion mapping $i\colon N\hookrightarrow M$ vanishes on $N$. 
 Note that the maximum dimension of isotropic submanifolds of $(M,\xi)$ 
is $n$ because of the non-integrability of $\xi$. 
 When the dimension of an isotropic submanifold $N\subset(M,\xi)$ is $n$, 
it is said to be \emph{Legendrian}. 

  The framing, namely the trivialization of the normal bundle, 
of an isotropic submanifold in a contact manifold is described as follows. 
 Let $N$ be an isotropic submanifold 
of a contact manifold $(M,\xi)$ with a contact form $\alpha$. 
 Then $d\alpha$ defines a symplectic structure on each fiber, or hyperplane, 
of $\xi$. 
 With respect to the symplectic structure $d\alpha$, 
two  vectors $u,\ v\in\xi_x$ is said to be \emph{skew orthogonal}\/ 
if $d\alpha_x(u,v)=0$. 
 This relation is written as $u\skor v$. 
 Set 
\begin{equation*}
  (TN)^{\skor}:=\bigcup_{x\in N}\lft\{u\in\xi_x\mid 
  d\alpha_x(u,v)=0,\ \text{for any}\ v\in T_xN\rgt\}. 
\end{equation*}
 It is a subbundle of $\xi\vert_N\subset TM\vert_N$. 
 Then the definition of isotropic submanifolds implies $TN\subset(TN)^{\skor}$. 
 The quotient bundle $(TN)^{\skor}/TN$ is a symplectic bundle, 
and is called the \emph{conformal symplectic normal bundle}\/ 
of the isotropic submanifold $N\subset(M,\xi=\ker\alpha)$. 
 Let $\csn(N,M)$ denote this bundle. 
 It follows that the ordinary normal bundle $\nu(N,M)=(TM|_N)/TN$ splits as 
\begin{equation}\label{eq:splitting}
  \nu(N,M)\cong (TM|_N)/(\xi|_N)\oplus(\xi|_N)/(TN)^{\skor}\oplus \csn(N,M).   
\end{equation}
 The following proposition implies that the symplectic normal bundle determines 
a local form of an isotropic submanifold (see \cite{geitext}). 
%
%
\begin{prop}\label{prop:isotnbd}
  Let $N_i$ be a closed isotropic submanifold 
in a contact manifold $(M_i,\xi_i)$ with a contact form $\alpha_i$ 
for each $i=0,1$.  
 Suppose that $\varphi\colon N_0\to N_1$ is a diffeomorphism 
covered by a bundle isomorphism of their symplectic normal bundles 
$\csn(N_i,M_i)$. 
 Then $\varphi$ extends to a strict contactomorphism of their neighborhoods. 
\end{prop}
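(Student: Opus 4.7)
The plan is to prove this relative neighborhood theorem in two stages: first extend $\varphi\colon N_0\to N_1$ to a diffeomorphism $\Phi$ of tubular neighborhoods $U_i\supset N_i$ that matches $\alpha_0$ and $\alpha_1$ to first order along $N_0$, and then correct $\Phi$ to a strict contactomorphism by an isotopy of Moser type supported in a possibly smaller tubular neighborhood.

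For the first stage, I would use the splitting (\ref{eq:splitting}) to promote the hypothesis bundle isomorphism of $\csn(N_i,M_i)$ covering $\varphi$ to an isomorphism of the full normal bundles $\nu(N_0,M_0)\to\nu(N_1,M_1)$. The line subbundle $(TM_i|_{N_i})/(\xi_i|_{N_i})$ is canonically trivialized by the Reeb vector field $R_{\alpha_i}$, and the summand $(\xi_i|_{N_i})/(TN_i)^{\skor}$ is canonically identified with $T^*N_i$ via $v\mapsto d\alpha_i(v,\cdot)|_{TN_i}$; together with the hypothesis isomorphism on the conformal symplectic summand, these produce the desired bundle map. I would then realize this bundle isomorphism as a diffeomorphism $\Phi\colon U_0\to U_1$ through a standard model of a tubular neighborhood of an isotropic submanifold, arranged so that $\Phi^*\alpha_1=\alpha_0$ and $\Phi^*d\alpha_1=d\alpha_0$ hold when restricted to $TM_0$ along $N_0$.

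For the second stage, set $\tilde\alpha:=\Phi^*\alpha_1$ and consider the linear interpolation $\alpha_t:=(1-t)\alpha_0+t\tilde\alpha$ on $U_0$. Because $\alpha_0$ and $\tilde\alpha$ agree to first order along $N_0$, each $\alpha_t$ is contact in some (possibly smaller) neighborhood of $N_0$. I would then solve $L_{X_t}\alpha_t=\alpha_0-\tilde\alpha$ for a time-dependent vector field $X_t$ vanishing on $N_0$, which via Cartan's formula and the decomposition $X_t=H_t R_t+Y_t$ with $R_t$ the Reeb vector field of $\alpha_t$ and $Y_t\in\ker\alpha_t$ reduces to the Reeb-flow equation $R_t(H_t)=(\alpha_0-\tilde\alpha)(R_t)$ together with $Y_t\intp d\alpha_t=(\alpha_0-\tilde\alpha)-dH_t$ on $\ker\alpha_t$; the latter admits a unique solution by nondegeneracy of $d\alpha_t|_{\ker\alpha_t}$. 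Since the right-hand sides vanish to first order along $N_0$, so does $X_t$, and its flow $\psi_t$ exists up to $t=1$ on a neighborhood of $N_0$. The composition $\Phi\circ\psi_1$ then satisfies $(\Phi\circ\psi_1)^*\alpha_1=\alpha_0$, as required.

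The main obstacle is the first stage, namely producing $\Phi$ so that it matches the two contact forms (not merely the two contact distributions) and their differentials along $N_0$. The key point that makes this possible is that the three summands of (\ref{eq:splitting}) all carry canonical identifications once $\alpha_i$ is fixed: the Reeb direction for the first, the $d\alpha_i$-duality with $T^*N_i$ for the second, and the hypothesis for the third. Once these identifications are correctly assembled, the Moser argument of the second stage is standard.
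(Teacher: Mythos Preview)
The paper does not give its own proof of this proposition; it is quoted as a standard fact with a reference to Geiges' textbook \cite{geitext}. Your two-stage outline---assembling the normal-bundle isomorphism from the canonical identifications of the three summands in \eqref{eq:splitting}, realizing it as a diffeomorphism $\Phi$ matching $\alpha_0$ and $\Phi^*\alpha_1$ together with their differentials along $N_0$, and then running a Moser-type deformation along $\alpha_t=(1-t)\alpha_0+t\,\Phi^*\alpha_1$---is precisely the standard argument recorded there.
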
 

\subsubsection{Contact submanifold}\label{sec:ctsubmfd}
   Next, we discuss contact submanifolds. 
 Let $(M,\xi=\ker\alpha)$ be a contact manifold, 
and $N\subset M$ a submanifold. 
 $N$ is called a \emph{contact submanifold}\/ of $(M,\xi)$ 
if $\xi\vert_{TN}=\ker(i^\ast\alpha)$ is a contact structure on $N$, 
where $i\colon N\hookrightarrow M$ is the inclusion mapping. 
 For a contact submanifold $N\subset(M,\xi)$, 
another conformal symplectic normal bundle is defined as follows. 
 As a vector bundle whose fibers are skew orthogonal 
to the contact structure $\xi\vert_{TN}$ in $\xi$, we have 
\begin{equation*}
  (\xi|_{TN})^{\skor}:=\bigcup_{x\in N}\lft\{u\in\xi_x\mid 
  d\alpha_x(u,v)=0,\ \text{for any}\ v\in (\xi|_{TN})_x\rgt\}. 
\end{equation*}
 The following proposition implies 
that a contact structure on a contact submanifold 
and the symplectic normal bundle determine 
a local form of the contact submanifold (see \cite{geitext}). 
%
%
\begin{prop}\label{prop:ctnbd}
  Let $N_i$ be a closed contact submanifold 
in a contact manifold $(M_i,\xi_i)$ with a contact form $\alpha_i$ 
for each $i=0,1$.  
 Suppose that 
$\varphi\colon (N_0,\ker\alpha_0|_{TN_0})\to (N_1,\ker\alpha_1|_{TN_1})$ 
is a contactomorphism 
covered by a bundle isomorphism of their symplectic normal bundles 
$\lft(\xi_i|_{TN_i}\rgt)^\skor$. 
 Then $\varphi$ extends to a strict contactomorphism of their neighborhoods. 
\end{prop}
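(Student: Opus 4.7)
The plan is to extend $\varphi$ first to a diffeomorphism of tubular neighborhoods using the bundle isomorphism, and then correct it by a Moser-trick isotopy supported near $N_0$, mirroring the argument behind Proposition~\ref{prop:isotnbd} but working with the splitting appropriate for contact submanifolds.

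First, I would identify $\nu(N_i, M_i) \cong (\xi_i|_{TN_i})^\skor$. Since $N_i$ is a contact submanifold, the Reeb direction of $\alpha_i|_{TN_i}$ already lies inside $TN_i$, so the decomposition $\xi_i|_{N_i} = (\xi_i|_{TN_i}) \oplus (\xi_i|_{TN_i})^\skor$ provides this identification. Choosing a fiberwise exponential on a small disk sub-bundle of $(\xi_i|_{TN_i})^\skor$ yields a tubular neighborhood embedding $\Psi_i: U_i \to V_i \subset M_i$. Composing the given bundle isomorphism $\hat\varphi: (\xi_0|_{TN_0})^\skor \to (\xi_1|_{TN_1})^\skor$ with these trivializations produces a diffeomorphism $\Phi := \Psi_1 \circ \hat\varphi \circ \Psi_0^{-1}: V_0 \to V_1$ extending $\varphi$.

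Second, set $\tilde\alpha_1 := \Phi^*\alpha_1$ on $V_0$. By rescaling $\alpha_1$ by a positive function on $N_1$ (which does not affect $\xi_1$) we may arrange $\varphi^*(\alpha_1|_{TN_1}) = \alpha_0|_{TN_0}$; the bundle isomorphism hypothesis, combined with a further rescaling of the normal fiber coordinates through $\hat\varphi$, lets us also arrange $d\tilde\alpha_1 = d\alpha_0$ on $(\xi_0|_{TN_0})^\skor$ pointwise along $N_0$. Hence $\tilde\alpha_1$ and $\alpha_0$ agree as $1$-forms on $TM_0|_{N_0}$. Consequently, each form in the linear family $\alpha_t := (1-t)\alpha_0 + t\tilde\alpha_1$, $t\in[0,1]$, is contact on some (possibly smaller) neighborhood of $N_0$, by openness of the contact condition.

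Finally, I would apply the Moser trick: seek a time-dependent vector field $X_t$ vanishing on $N_0$, together with a function $\mu_t$, such that
\[
  \mathcal{L}_{X_t}\alpha_t + \mu_t\,\alpha_t \;=\; -(\tilde\alpha_1 - \alpha_0).
\]
Evaluating on the Reeb field of $\alpha_t$ determines $\mu_t$, after which the invertibility of $d\alpha_t$ on $\xi_t := \ker\alpha_t$ pins down $X_t$ uniquely up to its component along $\alpha_t$. Since the right-hand side vanishes along $N_0$, the solution $X_t$ vanishes along $N_0$ as well, its flow $\psi_t$ exists up to $t = 1$ near $N_0$, and $\Phi \circ \psi_1$ is the required extension. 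The main obstacle is the second step: arranging that $\alpha_0$ and $\Phi^*\alpha_1$ actually agree as $1$-forms along $N_0$, rather than only defining conformally equivalent contact data. This uses the full strength of the CSN-bundle isomorphism hypothesis (its compatibility with the symplectic forms $d\alpha_i|_{(\xi_i|_{TN_i})^\skor}$), and requires carefully coordinating two separate conformal adjustments — one on $\alpha_1$ restricted to $N_1$, another on the fiber scale of $\hat\varphi$ — so that both the form and its exterior derivative match along the zero section simultaneously.
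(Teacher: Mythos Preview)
The paper does not give its own proof of this proposition; it is stated as a standard result with a reference to Geiges' textbook~\cite{geitext}. Your outline---identify $\nu(N_i,M_i)$ with $(\xi_i|_{TN_i})^\skor$, extend $\varphi$ to tubular neighborhoods via the bundle isomorphism, then correct by a Moser--Gray isotopy supported near $N_0$---is exactly the standard argument one finds there.

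One minor point on your second step: the matching of $d\tilde\alpha_1$ and $d\alpha_0$ on the normal fibers is not what yields your conclusion that $\tilde\alpha_1=\alpha_0$ on $TM_0|_{N_0}$. That conclusion already follows from (i) your rescaling making the forms agree on $TN_0$, together with (ii) the fact that both forms vanish on $(\xi_0|_{TN_0})^\skor$, since this subbundle sits inside $\xi_0$ and $\Phi_*$ carries it into $\xi_1$ by construction of $\hat\varphi$. This zeroth-order agreement along $N_0$ is precisely what makes $\dot\alpha_t$ vanish on $N_0$ and hence forces $X_t|_{N_0}=0$; no matching of exterior derivatives is required for the Moser step, so the ``coordination of two conformal adjustments'' you flag as the main obstacle is less delicate than you fear.
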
 
\noindent
 A transverse curve in a contact manifold 
is a contact submanifold of dimension $1$. 
 Then, as a corollary of Proposition~\ref{prop:ctnbd}, 
we have the following neighborhood theorem for transverse curves. 

%
%
\begin{coro}\label{cor:s1dar}
  Let $\Gamma$ be a transverse circle 
in a $(2n+1)$-dimensional contact manifold $(M,\xi)$. 
 Then there exists a tubular neighborhood of\/ $\Gamma\subset(M,\xi)$ 
which is contactomorphic to some tubular neighborhood of a transverse circle 
$S^1\times\{0\}$ in the contact open manifold 
$(S^1\times\R^{2n},\xi_0)$ with  
\begin{equation*}
  \xi_0=\ker\lft(d\phi+\sum_{i=1}^n(x_idy_i-y_idx_i)\rgt)
  =\ker\lft(d\phi+\sum_{i=1}^nr_i^2d\theta_i\rgt), 
\end{equation*}
where $\phi$ is a coordinate of $S^1$, 
$(x_i,y_i)$ are coordinates of $\R^2$,  
and $(r_i,\theta_i)$ are polar coordinates of\/ $\R^2$, $i=1,\dots,n$. 
\end{coro}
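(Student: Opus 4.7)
The statement is presented as a corollary of Proposition~\ref{prop:ctnbd}, so the plan is to realize the transverse circle $\Gamma$ as a contact submanifold, check the hypotheses of that proposition against the standard model, and then invoke it. First I would verify that a transverse circle really fits the framework of Subsubsection~\ref{sec:ctsubmfd}: since $\Gamma$ is one-dimensional and its tangent line is transverse to $\xi$ at every point, the pullback $i^{\ast}\alpha$ is nowhere zero, so $\xi\cap T\Gamma=0$ and $\ker(i^{\ast}\alpha)$ is the zero subbundle of $T\Gamma$. This is trivially a (degenerate) contact structure on a $1$-manifold, so $\Gamma$ qualifies as a contact submanifold in the sense of the proposition. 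The same observation applies to the model transverse circle $S^1\times\{0\}\subset(S^1\times\R^{2n},\xi_0)$, where $\alpha_0=d\phi+\sum_{i=1}^n r_i^2 d\theta_i$ restricts to $d\phi$.

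Next I would build the data required by Proposition~\ref{prop:ctnbd}. For the diffeomorphism $\varphi\colon\Gamma\to S^1\times\{0\}$, any orientation-preserving diffeomorphism of circles works, and it is automatically a contactomorphism of the degenerate structures because both kernels are zero. The nontrivial point is to cover $\varphi$ by a symplectic bundle isomorphism on the conformal symplectic normal bundles. Here I would compute $(\xi|_{T\Gamma})^{\skor}=\xi|_{\Gamma}$, which is a rank-$2n$ symplectic vector bundle over $\Gamma\cong S^1$ with fiberwise form $d\alpha|_{\xi}$; similarly the normal bundle on the model side is $\xi_0|_{S^1\times\{0\}}=\ker d\phi$ with symplectic form $d\alpha_0|_{S^1\times\{0\}}=2\sum dx_i\wedge dy_i$, which is the trivial rank-$2n$ symplectic bundle over $S^1$.

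The key input is then the triviality of symplectic vector bundles over $S^1$. Since $Sp(2n,\R)$ is connected, every rank-$2n$ symplectic bundle over the circle is classified by $\pi_0(Sp(2n,\R))=0$ and hence is symplectically trivial. In particular, $\xi|_{\Gamma}$ and $\xi_0|_{S^1\times\{0\}}$ are both trivial and therefore symplectically isomorphic; I can choose the isomorphism to cover $\varphi$. Applying Proposition~\ref{prop:ctnbd} now yields a strict contactomorphism between tubular neighborhoods of $\Gamma\subset(M,\xi)$ and of $S^1\times\{0\}\subset(S^1\times\R^{2n},\xi_0)$, which is the desired conclusion.

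The main potential obstacle is precisely the step of producing the symplectic bundle isomorphism, and the remark to make is that beyond $\pi_0$-triviality of $Sp(2n,\R)$ no further topological data of $\Gamma$ intervene, which is why the statement holds uniformly for every embedded transverse circle. Everything else reduces to unpacking definitions and quoting Proposition~\ref{prop:ctnbd}, so no extra analytic work is needed.
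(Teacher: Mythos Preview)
Your proof is correct and follows exactly the route the paper intends: the paper simply states the result as an immediate corollary of Proposition~\ref{prop:ctnbd} (noting that a transverse curve is a contact submanifold of dimension~$1$), and you have accurately supplied the implicit details, including the key observation that the conformal symplectic normal bundle is all of $\xi|_\Gamma$ and that any symplectic vector bundle over $S^1$ is trivial since $Sp(2n,\R)$ is connected.
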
 

\subsubsection{$\xi$-round hypersurface}\label{sec:xi-rdhyps}
  The notion of $\xi$-round hypersurface is introduced in \cite{mnw} 
as a higher-dimensional generalization 
of pre-Lagrangian torus in contact $3$-manifold. 
 Let $H$ be a hypersurface in a $(2n+1)$-dimensional contact manifold $(M,\xi)$.
 Suppose that the hyper surface $H$ is diffeomorphic to $N\times S^1$ 
where $N$ is a closed $(2n-1)$-dimensional manifold 
with a contact structure $\eta$. 
 The hyper surface $H\cong N\times S^1$ is called 
a \emph{$\xi$-round hypersurface modeled on a contact manifold $(N,\eta)$}\/ 
if it is transverse to $\xi$ and satisfies 
\begin{equation*}
  \xi|_{T(N\times\{s\})}=\eta, \qquad 
  \xi_{(p,s)}\cap T_{(p,s)}H=\eta_p\oplus T_sS^1, 
\end{equation*}
at any point $(p,s)\in N\times S^1=H$, where we identify $H$ and $N\times S^1$. 
 Note that, when $(M,\xi)$ is $3$-dimensional, a $\xi$-round hypersurface 
is a pre-Lagrangian torus with closed leaves of the characteristic foliation. 

%
%
\begin{expl}\label{expl:xirdmostctsph}
  We observe that the boundary of the standard tubular neighborhood 
of a transverse circle in a contact manifold 
is a $\xi$-round hypersurface modeled on the standard contact sphere. 
 From Corollary~\ref{cor:s1dar}, 
there exists a tubular neighborhood of a transverse circle 
in a $(2n+1)$-dimensional contact manifold 
which is contactomorphic to a tubular neighborhood of $S^1\times\{0\}$ 
in the standard contact manifold $(S^1\times\R^{2n},\xi_0)$. 
 The standard contact structure $\xi_0$ is given as 
\begin{equation*}
  \xi_0=\ker\lft\{d\phi+\sum_{i=1}^n(x_idy_i-y_idx_i)\rgt\}
  =\ker\lft\{d\phi+\sum_{i=1}^nr_i^2d\theta_i\rgt\}, 
\end{equation*}
where $\phi$ is a coordinate of $S^1$, 
$(x_i,y_i)$ are coordinates of $\R^2$,  
and $(r_i,\theta_i)$ are the polar coordinates of $\R^2$, $i=1,\dots,n$. 
 Then we observe the boundary of the tubular neighborhood 
$S^1\times D^{2n}(\rho)\subset S^1\times\R^{2n}$ 
of the transverse core $S^1\times\{0\}$, 
where $D^{2n}(\rho)\subset\R^{2n}$ is the $(2n)$-dimensional disk 
with radius $\rho$. 
 The boundary $S^1\times\rd D^{2n}(\rho)\cong S^1\times S^{2n-1}$ is transverse 
to $\xi_0$ since $\xi_0$ has $\uvv{r_i}$ elements. 
 The contact structure $\xi_0|_{T(\{s\}\times S^{2n-1})}$ 
induced from $\xi_0$ on a sphere $\{s\}\times S^{2n-1}$, $s\in S^1$, is 
the standard contact structure 
$\eta_0:=\ker\lft(\sum_{i=1}^n(x_idy_i-y_idx_i)\rgt)$ 
on a $(2n-1)$-dimensional sphere. 
 Then the boundary $S^1\times\rd D^{2n}(\rho)\subset(S^1\times\R^{2n},\xi_0)$ 
is a $\xi_0$-round hypersurface 
modeled on the standard contact sphere $(S^{2n-1},\eta_0)$. 
\end{expl}

  For $\xi$-round hypersurfaces, 
it is proved in \cite{mnw} that the neighborhood is determined 
by contact submanifold $(N,\eta)$. 
%
%
\begin{prop}\label{prop:xi-rdhyps}
  Let $H=N\times S^1$ be a $\xi$-round hypersurface 
in a contact manifold $(M,\xi)$. 
 Then there exists a small tubular neighborhood of $H\subset(M,\xi)$ 
which is contactomorphic to 
$\lft(H\times(-\epsilon,\epsilon),\ker(\alpha_N+sd\phi)\rgt)$, 
where $\alpha_N$ is a contact form on $N$ for the contact structure $\xi|_{TN}$,
and $s\in(-\epsilon,\epsilon)$, $\phi\in S^1$ are coordinates. 
\end{prop}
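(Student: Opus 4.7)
The plan is to follow the standard Moser/Gray-stability paradigm for neighborhood theorems in contact topology, after carefully setting up a collar adapted to the transversality condition $\xi\pitchfork H$.

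\emph{Step 1: Construction of an adapted collar.}
The hypothesis $\xi_{(p,s)}\cap T_{(p,s)}H=\eta_p\oplus T_sS^1$ of dimension $2n-1$, together with $\dim\xi=\dim TH=2n$, gives $\xi|_H+TH=TM|_H$. Hence the line bundle $\xi|_H/(\xi|_H\cap TH)$ maps isomorphically onto the normal bundle $TM|_H/TH$, which is trivial since $H=N\times S^1$ is orientable and so is $M$. I would pick a nowhere-vanishing section $\nu$ of $\xi|_H$ transverse to $TH$, extend it to a vector field $\tilde\nu$ on a neighborhood of $H$ transverse to $H$, and use its flow to define a diffeomorphism $\Phi\colon H\times(-\epsilon,\epsilon)\to U\subset M$ with $\Phi|_{H\times\{0\}}=\mathrm{id}_H$ and $d\Phi(\partial_s)|_{s=0}=\nu$.

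\emph{Step 2: Normalization of the pulled-back contact form.}
Choose a contact form $\alpha$ for $\xi$ near $H$ and set $\alpha_0:=\Phi^\ast\alpha$. Since $d\Phi(\partial_s)|_{s=0}=\nu\in\xi$, we have $\alpha_0(\partial_s)|_{s=0}=0$. On $TH|_{s=0}$, the kernel of $\alpha_0|_{TH}$ equals $\eta\oplus TS^1$, which forces the $d\phi$-coefficient to vanish; consequently $\alpha_0|_{H}=f\,\alpha_N$ for some nonvanishing function $f\in C^\infty(H)$, where $\alpha_N$ is the prescribed contact form on $N$. Replacing $\alpha$ by $\alpha/\tilde f$ for a smooth extension $\tilde f$ of $f$, I may assume $\alpha_0|_H=\alpha_N$ and still $\alpha_0(\partial_s)|_{s=0}=0$, so that $\alpha_0$ and the target form $\alpha_1:=\alpha_N+s\,d\phi$ agree as $1$-forms on all of $TM|_H$, not just on $TH$.

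\emph{Step 3: Verification that the model is contact and setup of Moser interpolation.}
A direct computation gives $\alpha_1\wedge(d\alpha_1)^n=n\,\alpha_N\wedge(d\alpha_N)^{n-1}\wedge ds\wedge d\phi$, a volume form, because $d\alpha_N$ has rank $2n-2$ and therefore $(d\alpha_N)^n=0$. Let
\begin{equation*}
\alpha_t=(1-t)\alpha_0+t\alpha_1,\qquad t\in[0,1].
\end{equation*}
Since $\alpha_1-\alpha_0$ vanishes identically on $TM|_H$, it is $O(s)$, so $\alpha_t=\alpha_0+O(s)$ is contact on a neighborhood $U'\subset U$ of $H$, uniformly in $t$.

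\emph{Step 4: Gray stability rel $H$.}
I look for a time-dependent vector field $X_t$ on $U'$, vanishing along $H$, whose flow $\psi_t$ satisfies $\psi_t^\ast\alpha_t=\mu_t\alpha_0$ with $\mu_0\equiv 1$. Differentiating yields
\begin{equation*}
L_{X_t}\alpha_t+(\alpha_1-\alpha_0)=h_t\,\alpha_t,
\end{equation*}
which is solved in the usual way by taking $X_t\in\xi_t:=\ker\alpha_t$, setting $h_t:=(\alpha_1-\alpha_0)(R_t)$ with $R_t$ the Reeb field of $\alpha_t$, and using the nondegeneracy of $d\alpha_t|_{\xi_t}$ to invert $X_t\intp d\alpha_t=h_t\alpha_t-(\alpha_1-\alpha_0)$ along $\xi_t$. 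Because $(\alpha_1-\alpha_0)|_H=0$, the resulting $X_t$ vanishes along $H$, so $\psi_t$ is defined on all of $U'$ for $t\in[0,1]$ and restricts to the identity on $H$. The composition $\Phi\circ\psi_1^{-1}$ is then the desired contactomorphism between $(H\times(-\epsilon',\epsilon'),\ker(\alpha_N+s\,d\phi))$ and a neighborhood of $H$ in $(M,\xi)$.

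The main obstacle is Step 1--2: one must make the two forms $\alpha_0$ and $\alpha_1$ agree on all of $TM|_H$ (not merely on $TH$), since otherwise the Moser vector field will not vanish on $H$ and the resulting isotopy will move $H$. Choosing the transverse direction $\nu$ inside $\xi|_H$ --- which is precisely what the $\xi$-round hypothesis permits --- is what makes the $ds$-component of $\alpha_0$ vanish along $H$, matching the model $\alpha_N+s\,d\phi$.
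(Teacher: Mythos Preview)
Your proof is correct and follows the standard Moser/Gray-stability template for contact neighborhood theorems; the key observation --- that the $\xi$-round hypothesis lets you pick the collar direction $\nu$ inside $\xi|_H$, forcing $\alpha_0(\partial_s)|_{s=0}=0$ and hence $\alpha_0=\alpha_1$ on all of $TM|_H$ --- is exactly the right one. Note, however, that the paper does not supply its own proof of this proposition: it is stated and attributed to \cite{mnw}, so there is no in-paper argument to compare against. Your argument is essentially the one found in that reference.
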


\subsubsection{Pre-Lagrangian submanifold}\label{sec:preLag}
  Although we have already defined pre-Lagrangian 
torus in a contact $3$-manifold, 
pre-Lagrangian submanifold in a contact manifold is a more general notion. 
 Let $L$ be an $(n+1)$-dimensional submanifold 
in a $(2n+1)$-dimensional contact manifold $(M,\xi)$. 
 The submanifold is said to be \emph{pre-Lagrangian}\/ 
if it satisfies the following condition: 
\begin{itemize}
\item $L$ is transverse to the contact structure $\xi$, 
\item the distribution $\xi\cap TL$ induced from $\xi$ on $L$ is integral 
  and is defined by a closed $1$-form on $L$. 
\end{itemize}
 In other words, $L$ is foliated by Legendrian leaves. 
 The term ``pre-Lagrangian'' comes from the fact 
that, for a pre-Lagrangian submanifold in a contact manifold, 
there exists a Lagrangian submanifold in the symplectization 
of the contact manifold that projects to the pre-Lagrangian submanifold 
(see \cite{elihofsal}). 

  A pre-Lagrangian submanifold has a characteristic tubular neighborhood. 
 Let $L$ be a pre-Lagrangian submanifold 
in a contact $(2n+1)$-dimensional manifold $(M,\xi)$. 
 Then the Legendrian submanifold $N^n\subset L\subset(M,\xi)$ 
obtained as an integral manifold of $\xi\cap TL$ 
has the standard tubular neighborhood 
which is contactomorphic to the jet space $J^1(N,\R)\to N$ 
with the canonical distribution.
 In this tubular neighborhood, a pre-Lagrangian submanifold $L$ 
is identified with the so called ``$0$-wall'' $N\times\R\subset J^1(N,\R)$, 
that is the locus consists of $1$-jets of functions on $N$ 
with zero differential (see \cite{elihofsal}). 
 Therefore, if a pre-Lagrangian submanifold $L$ has 
a product foliation $N\times\R$ or $N\times S^1$ with Legendrian leaves $N$, 
then $L$ has a unique tubular neighborhood in $(M,\xi)$. 

%
%
\begin{prop}\label{prop:nbdpL}
  Let $L$ be a pre-Lagrangian submanifold 
in a $(2n+1)$-dimensional contact manifold $(M,\xi)$. 
 If $L$ has a product Legendrian foliation, 
there exists a unique, up to contactomorphism, 
tubular neighborhood in $(M,\xi)$.
\end{prop}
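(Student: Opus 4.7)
The plan is to reduce to the standard $1$-jet model via the Legendrian neighborhood theorem and then use a Moser-type straightening to identify $L$ with the $0$-wall.

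First, I would fix a Legendrian leaf $N_0 \cong N \times \{s_0\}$ of $L$. A Legendrian submanifold is the top-dimensional case of an isotropic submanifold, so its conformal symplectic normal bundle $\csn(N_0, M)$ has rank zero and is trivially determined. Proposition~\ref{prop:isotnbd} therefore yields a strict contactomorphism from a tubular neighborhood of $N_0$ in $(M, \xi)$ onto a neighborhood of the zero section in the $1$-jet space $(J^1(N_0, \R), \ker(dz - \lambda))$, where $\lambda$ is the pullback of the canonical Liouville form on $T^*N_0$. In this model the \emph{$0$-wall} $W_0 := N_0 \times \R_z$ is a pre-Lagrangian submanifold with product Legendrian foliation $\{N_0 \times \{z_0\}\}$, and the claim reduces to showing that any pre-Lagrangian with product Legendrian foliation inside $J^1(N_0, \R)$ coincides with $W_0$ up to a germ contactomorphism.

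Next, I would exploit the fact that a Legendrian submanifold of $J^1(N_0, \R)$ transverse to the cotangent fibres is precisely the $1$-jet graph of a smooth function on $N_0$. Parametrising the leaves of $L$ near $N_0$ by $s$ therefore gives a smooth family $\{f_s\}_{s \in S^1}$ with $f_{s_0} \equiv 0$, globally well-defined thanks to the product structure and the closedness of the $1$-form defining the Legendrian foliation on $L$. Interpolating linearly through the pre-Lagrangians $L_t$ whose leaves are the $1$-jet graphs of $tf_s$, $t \in [0, 1]$, a standard contact Moser-trick argument produces a time-dependent contact vector field whose time-$1$ flow carries $L = L_1$ onto $W_0 = L_0$. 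Composition with the Legendrian-neighborhood contactomorphism gives the desired identification between tubular neighborhoods of $L$ in $(M, \xi)$ and of $W_0$ in the model; since the latter is independent of the initial choice of $N_0$, the claimed uniqueness follows (with the evident $\Z$-quotient in $z$ producing the closed model $T^*N \times S^1$ when $L \cong N \times S^1$).

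The main obstacle is the global consistency of the straightening across the entire leaf parameter: the pointwise jet-graph data must assemble into a smoothly varying family $\{f_s\}$, and the Moser vector field must be defined on a uniform tubular neighborhood of all of $L$. Both requirements are satisfied precisely because the Legendrian foliation is a product, which provides a globally defined leaf parameter $s \in S^1$ and a trivialisation propagating the local jet-space model consistently along the entire $S^1$-factor.
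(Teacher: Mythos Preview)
Your proposal is correct and follows essentially the same route as the paper. The paper does not give a formal proof block; its justification is the paragraph immediately preceding the proposition, which (i) applies the Legendrian neighborhood theorem to a single leaf $N$ to obtain the jet-space model $J^1(N,\R)$, (ii) invokes \cite{elihofsal} for the identification of the pre-Lagrangian with the $0$-wall $N\times\R$, and (iii) concludes uniqueness from the product hypothesis. Your argument matches this step for step, the only difference being that in place of the citation you sketch an explicit Moser-type straightening via the interpolating family $L_t$ of $1$-jet graphs; this is a standard way to realise the identification the paper quotes.
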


\subsubsection{Convex hypersurface}\label{sec:cvhsrf}
 Theory of convex hypersurface is introduced by Giroux~\cite{giroux99} 
and has been a valuable tool in the $3$-dimensional contact topology. 
 It is also valid in higher dimensions. 
 We review some basic things and neighborhood theorems 
that are needed for contact round surgery 
in Section~\ref{sec:ctrdsurg} and~\ref{sec:gentw}. 

  First, we review the notion of characteristic foliation 
before introducing convexity. 
 It has already been defined for a surface in a contact $3$-manifold 
in Subsection~\ref{sec:ovtw}. 
 For general dimensions, it is defined as follows. 
 Let $\Sigma$ be a hypersurface 
in a $(2n+1)$-dimensional contact manifold $(M,\xi)$. 
 There exists a conformal symplectic structure on $\xi$ 
since $\xi$ is completely non-integrable. 
 Taking the skew-orthogonal of the intersection 
$\xi_x\cap T_x\Sigma\subset\xi_x$ 
at each point $x\in\Sigma$, 
we have a $1$-dimensional distribution with singularities on $\Sigma$. 
 Further, by integrating the distribution, 
we have a singular $1$-dimensional foliation $\Sigma_\xi$ on $\Sigma$. 
 Singular points are points where $\xi_x=T_x\Sigma$. 
 We call this singular foliation $\Sigma_\xi$ 
the \emph{characteristic foliation}\/ of $\Sigma$ with respect to $\xi$. 
 Locally the characteristic foliation $\Sigma_\xi$ is defined 
by the vector field $X$ that satisfies 
$X\lrcorner\mu_\Sigma=\alpha\wedge(d\alpha)^{n-1}|_{T\Sigma}$, 
where $\mu_{\Sigma}$ is a volume form on $\Sigma$, 
and $\alpha$ is a contact form for $\xi$. 
 One of the important properties is that a characteristic foliation determines 
a germ of contact structures along a hypersurface (see~\cite{giroux91}). 
%
%
\begin{prop}[Giroux]\label{prop:charfol}
  Let $\Sigma$ be a closed hypersurface in a contact manifold $(M,\xi)$. 
 Then two germs of contact structures 
which have the same characteristic foliation are isomorphic. 
\end{prop}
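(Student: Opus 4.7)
The plan is to adapt Moser's stability argument to a tubular neighborhood of $\Sigma$. Taking defining forms $\alpha_0,\alpha_1$ for $\xi_0,\xi_1$ on a common neighborhood of $\Sigma$ with coinciding characteristic foliations $\Sigma_{\xi_0}=\Sigma_{\xi_1}$, I aim to produce a diffeomorphism $\varphi$ of neighborhoods of $\Sigma$, fixing $\Sigma$ pointwise, such that $\varphi^*\xi_1=\xi_0$; this realizes the isomorphism of germs.

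My first step is to normalize the two contact forms so that they agree on $T\Sigma$. The characteristic foliation is singular exactly where $\alpha_i|_{T\Sigma}$ degenerates, and elsewhere is encoded by the conformal class of $\alpha_i|_{T\Sigma}$; since the singular foliations coincide, there is a unique smooth positive function $f\colon\Sigma\to\R_{>0}$ with $f\,\alpha_0|_{T\Sigma}=\alpha_1|_{T\Sigma}$ (extended through singular points by continuity). Extending $f$ to a positive function on a neighborhood and replacing $\alpha_0$ by $f\alpha_0$, which still defines $\xi_0$, yields $\alpha_0|_{T\Sigma}=\alpha_1|_{T\Sigma}$, so $\alpha_1-\alpha_0$ lies in the annihilator of $T\Sigma$ along $\Sigma$. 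I then promote this to equality on the full restriction $TM|_\Sigma$: I choose a vector field $\partial_u$ transverse to $\Sigma$ and avoiding $\xi_0\cup\xi_1$ along $\Sigma$ (possible because this union is a proper subset of $T_pM$ at every point, and at singular points both hyperplanes coincide with $T_p\Sigma$), giving a product tubular neighborhood $\Sigma\times(-\epsilon,\epsilon)$. A stretching diffeomorphism $\psi(x,u)=(x,(1+k(x))u+O(u^2))$ fixes $\Sigma$ pointwise and pulls $du$ back to $(1+k)du$ along $\Sigma$; choosing $k$ pointwise to match the $du$-components of $\psi^*\alpha_1$ and $\alpha_0$ (both nonzero and of the same sign, by the common co-orientation) achieves $\psi^*\alpha_1=\alpha_0$ on $TM|_\Sigma$.

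With this enhanced agreement in place I apply Moser's trick to the linear path $\alpha_t=(1-t)\alpha_0+t\alpha_1$. Since $\alpha_1-\alpha_0$ vanishes along $\Sigma$, I can write it as $u\,\eta$ near $\Sigma$, and expanding $\alpha_t\wedge(d\alpha_t)^n$ at points of $\Sigma$ shows it is linear in $t$ with nonzero endpoint values of the same sign, so $\alpha_t$ is a contact form on a uniform neighborhood of $\Sigma$ for all $t\in[0,1]$. The Moser equation
\begin{equation*}
  X_t\intp d\alpha_t=\mu_t\alpha_t-\dot\alpha_t,\qquad \alpha_t(X_t)=0,\qquad \mu_t=\dot\alpha_t(R_t),
\end{equation*}
where $R_t$ is the Reeb field of $\alpha_t$, uniquely determines a time-dependent vector field $X_t\in\xi_t$ via the non-degeneracy of $d\alpha_t$ on $\xi_t$. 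Because $\dot\alpha_t=\alpha_1-\alpha_0$ vanishes identically on $TM|_\Sigma$, the right-hand side vanishes along $\Sigma$, forcing $X_t|_\Sigma\equiv 0$; integrating $X_t$ then produces a flow $\varphi_t$ defined on a neighborhood of $\Sigma$, fixing $\Sigma$ pointwise, whose time-$1$ map satisfies $\varphi_1^*\alpha_1=\lambda\alpha_0$ for a positive function $\lambda$, hence $\varphi_1^*\xi_1=\xi_0$.

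The main obstacle I expect is the second half of the normalization: arranging the transverse vector field $\partial_u$ smoothly and globally so that it avoids both contact hyperplanes along $\Sigma$, and checking that the stretching function $k$ extends smoothly through the singular points of the characteristic foliation, where the pointwise geometry of $\xi_0$ and $\xi_1$ degenerates to $T_p\Sigma$. Once the equality $\alpha_0=\alpha_1$ on $TM|_\Sigma$ is secured, the contact property of the interpolation and the vanishing of the Moser field on $\Sigma$ both follow cleanly.
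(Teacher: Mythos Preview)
The paper does not supply a proof of this proposition; it is quoted as a preliminary result due to Giroux~\cite{giroux91}. Your Moser-type argument is precisely the standard approach in dimension~$3$, and for contact $3$-manifolds your outline is essentially correct.

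The gap appears in higher dimensions, in your first normalization step. You claim that because the singular foliations $\Sigma_{\xi_0}$ and $\Sigma_{\xi_1}$ coincide, the restrictions $\alpha_0|_{T\Sigma}$ and $\alpha_1|_{T\Sigma}$ must be proportional. In dimension~$3$ this is automatic: the characteristic foliation \emph{is} the line field $\xi\cap T\Sigma=\ker(\alpha|_{T\Sigma})$. When $\dim M=2n+1>3$, however, the characteristic foliation is only the one-dimensional kernel of $d\alpha$ on the $(2n{-}1)$-dimensional distribution $\xi\cap T\Sigma$, and it does not recover that distribution. Concretely, on $\Sigma=\{z=0\}\subset\R^5$ the contact forms $\alpha_0=dz+x_1\,dy_1+x_2\,dy_2$ and $\alpha_1=dz+x_1\,dy_1+2x_2\,dy_2$ both induce the characteristic foliation generated by $x_1\,\rd_{x_1}+x_2\,\rd_{x_2}$, yet $\alpha_0|_{T\Sigma}$ and $\alpha_1|_{T\Sigma}$ are not conformal, and indeed $\xi_0\cap T\Sigma\neq\xi_1\cap T\Sigma$ at generic points. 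Consequently no diffeomorphism fixing $\Sigma$ pointwise can carry $\xi_0$ to $\xi_1$ (such a map would have $d\varphi|_{T\Sigma}=\mathrm{id}$ and hence preserve $\xi\cap T\Sigma$), so your Moser flow---which by design vanishes along $\Sigma$---cannot produce the required isomorphism. In higher dimensions the isomorphism must be allowed to move points within $\Sigma$, and your first reduction has to be replaced by a genuinely different argument.
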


  The convexity of a hypersurface in a contact $(2n+1)$-dimensional manifold 
is defined as follows. 
 It is defined by using a certain vector field. 
 A vector field $X$ on a $(2n+1)$-dimensional contact manifold $(M,\xi)$ 
is said to be \emph{contact\/} 
if its flow $\phi_t=\operatorname{Exp}(tX)$ 
preserves the contact structure $\xi$: $\lft({\phi_t}\rgt)_\ast\xi=\xi$. 
 A hypersurface $\Sigma$ 
embedded in a $(2n+1)$-dimensional contact manifold $(M,\xi)$ 
is said to be \emph{convex\/} 
if there exists a contact vector field on a neighborhood of $\Sigma\subset M$ 
which is transverse to $\Sigma$. 
 A convex hypersurface has a vertically invariant tubular neighborhood. 

  One of the most important properties is the flexibility 
of characteristic foliations. 
 In order to describe the property, we need the following notions. 
 Let $\Sigma$ be a convex hypersurface 
in a $(2n+1)$-dimensional contact manifold $(M,\xi)$, 
and $X$ a contact vector field defined near $\Sigma$ 
which is transverse to $\Sigma$. 
 The \emph{dividing set}\/ $\Gamma_\Sigma$ of $\Sigma$ is defined as 
\begin{equation*}
  \Gamma_\Sigma:=\{x\in\Sigma\mid\xi_x\ni X_x\}. 
\end{equation*}
 In other words, it is a set of points where the contact hyperplane 
gets ``vertical'' to $\Sigma$. 
 A dividing set $\Gamma_\Sigma$ is a hypersurface in $\Sigma$ that is 
transverse to leaves of a characteristic foliation $\Sigma_\xi$. 
 On the other hand, 
a $1$-dimensional foliation $\mathcal{F}$ on $\Sigma$ with singularity
is said to be \emph{divided\/} by $\Gamma_\Sigma$ if
\begin{itemize} \setlength{\itemsep}{0cm}\setlength{\parskip}{0cm} 
  \item $\Sigma\setminus\Gamma_\Sigma$ is divided into two kinds of regions: 
  $\Sigma\setminus\Gamma_\Sigma=U_+\sqcup U_-$, 
\item any leaf of $\mathcal{F}$ is transverse to $\Gamma_\Sigma$, 
\item there exists a vector field $v$ which is tangent to $\mathcal{F}$ 
  and a volume form $\omega$ on $\Sigma$ which satisfy: 
  \begin{enumerate} \setlength{\itemsep}{0cm}\setlength{\parskip}{0cm} 
    \renewcommand{\labelenumi}{\textup{(\theenumi)}}
  \item $\operatorname{div}_\omega v>0$ on $U_+$, 
    $\operatorname{div}_\omega v<0$ on $U_-$, 
  \item the vector field $v$ looks outward of $U_+$ at $\Gamma_\Sigma$. 
  \end{enumerate}
\end{itemize}
 A characteristic foliation is clearly divided by the dividing set. 
 It is also proved that if a characteristic foliation is divided by some set 
then the closed hypersurface is convex (see~\cite{giroux99}). 
 The following theorem implies that dividing set has certain essential parts
of characteristic foliation. 

%
%
\begin{thrm}[Giroux, \cite{giroux99}]\label{thm:convFlex}
  Assume that $\Sigma$ is a convex hypersurface 
in a $(2n+1)$-dimensional contact manifold $(M,\xi)$ 
with a contact vector field $X$ transverse to $\Sigma$, 
and that $\Gamma_\Sigma\subset\Sigma$ is a dividing set. 
 Let $\mathcal{F}$ be a $1$-dimensional foliation with singularity on $\Sigma$ 
divided by $\Gamma_\Sigma$. 
 Then there exists a family $\phi_t\colon\Sigma\to M$, $t\in[0,1]$, 
of embeddings which satisfies\textup{:} 
\begin{itemize} \setlength{\itemsep}{0cm}\setlength{\parskip}{0cm} 
\item $\phi_0=\textup{id}_\Sigma$, $\phi_t|_{\Gamma_\Sigma}
  =\textup{id}_{\Gamma_\Sigma}$, 
  for any $t\in[0,1]$,
\item $\phi_t(\Sigma)$ is transverse to $X$ for any $t\in[0,1]$,
\item $(\phi_1(\Sigma))_\xi=(\phi_1)_\ast\mathcal{F}$. 
\end{itemize}
\end{thrm}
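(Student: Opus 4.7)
The plan is to work in a vertically invariant tubular neighborhood provided by convexity. Since $\Sigma$ is convex with a transverse contact vector field $X$, a standard argument (using $\mathcal{L}_X\alpha=\lambda\alpha$ for some function $\lambda$ and then rescaling $\alpha$) identifies a neighborhood of $\Sigma$ with $\Sigma\times(-\epsilon,\epsilon)$ so that $X=\partial_t$ and the contact form takes the $t$-invariant shape $\alpha=u\,dt+\beta$ with $u\in C^\infty(\Sigma)$ and $\beta\in\Omega^1(\Sigma)$. Under this identification the dividing set becomes $\Gamma_\Sigma=u^{-1}(0)$, and the partition $\Sigma\setminus\Gamma_\Sigma=U_+\sqcup U_-$ corresponds to $\{\pm u>0\}$.

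Next I would parameterize nearby hypersurfaces transverse to $X$ by graphs $\Sigma_f:=\{(x,f(x)):x\in\Sigma\}$ for smooth $f\colon\Sigma\to(-\epsilon,\epsilon)$. Pulling $\alpha$ back along $x\mapsto(x,f(x))$ gives $u\,df+\beta$ on $\Sigma$, so, after identifying $\Sigma_f$ with $\Sigma$ via the obvious projection, the characteristic foliation of $\Sigma_f$ is the singular line field directed by the vector field dual (via a volume form on $\Sigma$) to $(u\,df+\beta)\wedge\bigl(d(u\,df+\beta)\bigr)^{n-1}$. Requiring $f\equiv 0$ on $\Gamma_\Sigma$ forces the zero set of $u\,df+\beta|_{\Gamma_\Sigma}$ (which controls where the new hypersurface is tangent to $\xi$) to coincide with $\Gamma_\Sigma$, so the dividing set of $\Sigma_f$ for the contact vector field $X$ is again $\Gamma_\Sigma$.

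The main analytic step is to show that, given any foliation $\mathcal{F}$ divided by $\Gamma_\Sigma$ with its auxiliary data $(v,\omega)$, one can choose $f$ vanishing on $\Gamma_\Sigma$ so that the characteristic foliation of $\Sigma_f$ coincides with $\mathcal{F}$. I would carry this out by a Moser-type deformation in the space of divided foliations. Since both the original characteristic foliation $\Sigma_\xi$ and the target $\mathcal{F}$ share the same dividing set $\Gamma_\Sigma$ and the same sign pattern of divergence on $U_\pm$, one can interpolate them through a smooth family $\mathcal{F}_t$ of foliations divided by $\Gamma_\Sigma$. Differentiating the desired identity "characteristic foliation of $\Sigma_{f_t}$ equals $\mathcal{F}_t$" yields, on each component of $\Sigma\setminus\Gamma_\Sigma$, a first-order linear PDE for $\dot f_t$ whose principal symbol is controlled by $u$ (hence nonvanishing away from $\Gamma_\Sigma$), and the divided-foliation conditions (the divergence and transversality requirements) are exactly what is needed to guarantee solvability with $\dot f_t=0$ on $\Gamma_\Sigma$.

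The hard part will be precisely this last step, namely the compatibility along $\Gamma_\Sigma$: one must verify that the local model of $\mathcal{F}$ across $\Gamma_\Sigma$ (where leaves cross transversely from $U_+$ to $U_-$) matches the only local model attainable by graphs $\Sigma_f$ with $f|_{\Gamma_\Sigma}=0$, and that the solution $\dot f_t$ extends smoothly across $\Gamma_\Sigma$. Once $f_t$ is constructed, setting $\phi_t(x):=(x,f_t(x))$ in the trivialized collar produces the required isotopy: $\phi_0=\mathrm{id}_\Sigma$ and $\phi_t|_{\Gamma_\Sigma}=\mathrm{id}_{\Gamma_\Sigma}$ by the boundary condition, $\phi_t(\Sigma)$ is a graph hence transverse to $X=\partial_t$, and $(\phi_1(\Sigma))_\xi=(\phi_1)_\ast\mathcal{F}$ by the construction of $f_1$.
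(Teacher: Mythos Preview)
The paper does not prove this theorem; it is stated as a result of Giroux with a citation to \cite{giroux99} and used as a black box, so there is no proof in the paper to compare your proposal against.

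That said, your outline follows the standard strategy behind Giroux's original argument: pass to a vertically invariant collar $\Sigma\times(-\epsilon,\epsilon)$ with $\alpha=u\,dt+\beta$, realize nearby hypersurfaces as graphs, and deform through graphs to realize the prescribed divided foliation. Two points deserve care. First, your sentence about ``forcing the zero set of $u\,df+\beta|_{\Gamma_\Sigma}$'' conflates two things: the dividing set of any graph $\Sigma_f$ with respect to $X=\partial_t$ is automatically $\{u=0\}=\Gamma_\Sigma$ regardless of $f$, simply because $\alpha(X)=u$ is $t$-invariant; no condition on $f|_{\Gamma_\Sigma}$ is needed for that. The condition $f|_{\Gamma_\Sigma}=0$ is instead what guarantees $\phi_t|_{\Gamma_\Sigma}=\mathrm{id}$. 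Second, the ``Moser-type'' step you describe is genuinely the heart of the matter, and in Giroux's proof it is not solved as a PDE for $f$ but rather by writing, on each of $U_\pm$, the $1$-form $\beta/u$ and showing that any two divided foliations with the same $\Gamma_\Sigma$ can be joined by a path realized by graphs; the divergence condition is what makes the resulting equation solvable up to the boundary. Your sketch is in the right direction but leaves exactly the substantive analysis undone.
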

\noindent
 This theorem implies that, in order to glue two contact manifolds, 
it is sufficient 
to check dividing sets on the boundaries and their orientations. 
 In fact, a characteristic foliation on a surface 
embedded in a contact manifold determines a germ of contact structure 
(see Proposition~\ref{prop:charfol}). 
 Then Theorem~\ref{thm:convFlex} implies a dividing set on the surface 
determines a germ, or the invariant tubular neighborhood, 
of contact structure.

\subsection{Confoliation}\label{sec:confoli}
  We review confoliations in this subsection. 
 Confoliation is a certain generalization of contact structure. 
 The notion includes both contact structure and foliation. 
 In the $3$-dimensional case, it is studied in \cite{elith}. 
 In this paper, we need results in higher dimensional case, as well. 
 We refer \cite{altwu} for that. 

  First of all, confoliation is defined as follows. 
 Let $M$ be an oriented $(2n+1)$-dimensional manifold with a Riemannian metric.
 A hyperplane field $\xi$ is determined, at least locally, 
as the kernel $\xi=\ker\alpha$ of a $1$-form $\alpha$ vanishing nowhere. 
%
%
\begin{dfn}
  A hyperplane field $\xi=\ker\alpha$ is called a (positive)
  \emph{confoliation} if it satisfies the following inequality:
  \begin{equation}\label{eq:confoli}
    \ast\{\alpha\wedge(d\alpha)^n\}\ge0, 
  \end{equation}
where $\ast$ is the Hodge star operator. 
\end{dfn} 
\noindent
 If $\xi$ is a contact structure, then $\ast\{\alpha\wedge(d\alpha)^n\}>0$.  

  One of the important properties is 
that, under some condition, a confoliation is deformed to a contact structure. 
 This property is studied in \cite{alt}, \cite{elith} for dimension $3$, 
and in \cite{altwu} for general dimensions. 

  In order to describe the condition, 
we need the following notion of the ``conductivity'' of contactness. 
 Let $M$ be an oriented $(2n+1)$-dimensional manifold with a Riemannian metric, 
and $\xi=\ker\alpha$ a positive confoliation on $M$. 
 Since $\xi=\ker\alpha$ is a confoliation, 
\begin{equation*}
  \tau(\alpha):=\ast\{\alpha\wedge(d\alpha)^{n-1}\}
\end{equation*}
is a differential $2$-form, by the defining inequality~\eqref{eq:confoli}. 
 Set $\nul{\tau(\alpha)}:=\{X\in TM\mid X\intp\tau(\alpha)=0\}$. 
 Moreover, let $H(\xi)\subset M$ denote the \emph{hot zone}, that is, 
the subset where $\xi$ is a contact structure: 
\begin{equation*}
  H(\xi):=\{x\in M\mid \ast\{\alpha\wedge(d\alpha)^n\}>0\}
  =\{x\in M\mid \alpha\wedge(d\alpha)^n>0\}. 
\end{equation*}
%
%
\begin{dfn}
  A confoliation $\xi$ is said to be \emph{conductive}\/ 
if any point in $M$ is connected to the contact part $H(\xi)\subset M$ 
by a path everywhere tangent to $\nul{\tau(\alpha)}^\perp$. 
\end{dfn} 

  Then the following result on deforming a confoliation to a contact structure 
was proved by Altschuler and Wu~\cite{altwu}. 
%
%
\begin{thrm}[Altschuler-Wu]\label{thm:approx}
  Any conductive confoliation on a compact orientable manifold 
is $C^\infty$-close to a contact structure. 
\end{thrm}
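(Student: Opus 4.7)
The plan is to deform the given confoliation $\xi=\ker\alpha$ by a parabolic-type flow on $1$-forms, and to use the conductivity hypothesis together with a subelliptic strong maximum principle to propagate the contact condition from the hot zone $H(\xi)$ to all of $M$ in arbitrarily short time while remaining $C^\infty$-close to $\alpha$.

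First I would fix a Riemannian metric on $M$ and consider a geometric heat flow of the form $\partial_t\alpha_t = -\Delta\alpha_t$ (or a suitably modified version adapted so as to preserve the underlying hyperplane field's coorientation and so that the equation is parabolic along the natural ``good'' directions). Standard short-time existence for parabolic equations on compact manifolds gives a smooth family $\alpha_t$ for $t\in[0,\epsilon)$ with $\alpha_0=\alpha$, and parabolic regularity implies $\alpha_t\to\alpha$ in $C^\infty$ as $t\to 0^+$. Thus $C^\infty$-closeness to the original confoliation is automatic; the real content is to show that $\alpha_t\wedge(d\alpha_t)^n>0$ everywhere on $M$ for all small $t>0$.

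Next I would derive the evolution equation for the scalar function $F_t:=\ast\{\alpha_t\wedge(d\alpha_t)^n\}$. A direct computation gives $\partial_t F_t = L F_t + (\text{lower order})$, where the principal part $L$ is a second-order operator whose characteristic directions are precisely $\operatorname{Null}(\tau(\alpha_t))^\perp$. At $t=0$ we have $F_0\ge 0$ with $F_0>0$ exactly on the hot zone $H(\xi)$. The key step is then to invoke a strong maximum principle of Bony--H\"ormander type for degenerate parabolic operators: whenever the drift and diffusion vector fields of $L$ together span the tangent space along a chain of integral curves connecting two points, positivity at one point propagates instantly to the other. Conductivity is precisely the hypothesis that every point of $M$ is reachable from $H(\xi)$ by paths tangent to $\operatorname{Null}(\tau(\alpha))^\perp$, which is the span of the principal directions of $L$ at $t=0$. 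Hence $F_t>0$ on $M$ for every $t>0$, i.e.\ $\alpha_t$ is a contact form.

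The hard part will be step three: verifying that the principal symbol of the evolution operator $L$ genuinely degenerates only along $\operatorname{Null}(\tau(\alpha))$ and arranging the flow so the H\"ormander bracket-generating condition encoded in ``conductivity'' translates into the hypothesis of the subelliptic strong maximum principle. Some care is needed because the null distribution depends on $\alpha_t$ itself, so one must combine the maximum principle with a short-time continuity argument to conclude that the null directions at time $t$ remain close enough to those at $t=0$ for the propagation of positivity to persist. Once this is in place, letting $t\to 0^+$ and choosing $t$ small yields a contact form arbitrarily $C^\infty$-close to $\alpha$, proving the theorem.
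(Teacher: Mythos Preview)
The paper does not prove this theorem at all: it is quoted as a result of Altschuler and Wu~\cite{altwu} and used as a black box (see Subsection~\ref{sec:confoli}, where it is stated immediately after the definition of conductivity with the attribution ``the following result on deforming a confoliation to a contact structure was proved by Altschuler and Wu''). So there is nothing in the paper to compare your argument against.

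That said, your sketch is essentially the strategy of the original Altschuler--Wu paper (and of Altschuler's earlier $3$-dimensional work~\cite{alt}): run a parabolic heat-type flow on the defining $1$-form, get $C^\infty$-closeness from short-time existence and continuity at $t=0$, derive an evolution equation for the contact volume $\ast\{\alpha_t\wedge(d\alpha_t)^n\}$, and use a Bony-type strong maximum principle for degenerate parabolic operators to propagate positivity from $H(\xi)$ along the directions of $\nul{\tau(\alpha)}^\perp$. You have correctly identified that conductivity is exactly the hypothesis needed for the subelliptic propagation, and that the delicate point is the time-dependence of the degeneracy directions. If you intend to flesh this out, the actual flow used is not the bare Hodge Laplacian but a modified flow tailored so that the evolution of the contact volume has the right degenerate-elliptic principal part; getting this right, together with the short-time continuity of the null distribution, is the technical heart of~\cite{altwu}.
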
 

\section{Generalization of Lutz twist}
\label{sec:genlztw}
  A generalization of the Lutz twist for contact structures 
on higher dimensional manifolds is introduced in this section. 
 This operation is defined in Subsection~\ref{sec:defGenLztw} 
as a modification of a contact structure along a certain embedded circle. 
 Then, in Subsection~\ref{sec:handftwists}, we discuss important properties 
of the half and full generalized Lutz twists. 
 These arguments amount to the proof of Theorem~\ref{thm_main} 
except overtwistedness. 
 The overtwistedness is discussed in Section~\ref{sec:newot}. 
 The key of this new notion is the generalized Lutz tube, 
which is introduced in Subsection~\ref{sec:gLtube}. 
 Further, in Subsection~\ref{sec:non-cpt}, we define a generalized Lutz twist 
along a line instead of circle. 

  We should remark that the definitions of these modifications are deduced 
from the symplectic round handle (see Section~\ref{sec:gentw}), 
although the definitions can be done without using it. 

\subsection{Generalization of the Lutz tube}\label{sec:gLtube}
  First of all, we introduce a new notion of generalized Lutz tube 
for higher-dimensional contact manifolds. 
 Recall that the original Lutz twist for $3$-dimensional contact manifolds 
is defined as an operation 
replacing the standard tubular neighborhood of a transverse knot 
with the so-called Lutz tube. 
 We will define a higher-dimensional generalization of the twist 
as a modification along a transverse circle in a contact manifold 
in a similar way. 
 Then, for the definition, 
we need a certain contact structure on the tubular neighborhood 
of a transverse circle 
as a generalization of the Lutz tube. 

  The generalized Lutz tube should be defined as a tubular neighborhood 
$S^1\times\R^{2n}$ of a circle with a certain contact structure. 
 However, not constructing such a model directly, 
we construct a prototype first. 
 The underlying manifold is $S^1\times\R^{2n}$, 
that is an open tubular neighborhood of $S^1\times\{0\}$. 
 Let $(\phi,r_1,\theta_1,\dots,r_n,\theta_n)$ be coordinates 
of $S^1\times\R^{2n}$. 
 Setting 
\begin{align}\label{eq:genlutzf}
  \omtw:=&\lft(\prod_{i=1}^{n}\cos r_i^2\rgt)d\phi
  +\sum_{i=1}^{n}\sin r_i^2d\theta_i \notag \\
  =&\{(\cos r_1^2)\cdots(\cos r_n^2)\}d\phi
  +(\sin r_1^2)d\theta_1+\dots+(\sin r_n^2)d\theta_n, 
\end{align}
we have a hyperplane field $\zeta:=\ker\omega_{\mathtt{tw}}$ 
on $S^1\times\mathbb{R}^{2n}$. 
 This $\zeta$ is not a contact structure but a confoliation. 
 In fact, 
\begin{align*}
  &\omtw\wedge(d\omtw)^n \notag \\
  =&2^{n}n!\lft\{(\cos^2r_1^2)\cdots(\cos^2r_n^2)
  +\sum_{i=1}^n(\cos^2r_1^2)\cdots(\cos^2r_{i-1}^2)(\sin^2r_i^2)(\cos^2r_{i+1}^2)
  \cdots(\cos^2r_n^2)\rgt\} \notag \\
  &\hspace{5cm} \hfill r_1r_2\cdots r_{n}d\phi\wedge dr_1\wedge d\theta_1\wedge\cdots
  \wedge dr_n\wedge d\theta_n. 
\end{align*}
 In other words, 
\begin{align}  \label{eq:protoctf}
  &\ast \lft\{\omtw\wedge(d\omtw)^n\rgt\} \notag \\
  =&2^{n}n!\lft\{(\cos^2r_1^2)\cdots(\cos^2r_n^2)
  +\sum_{i=1}^n(\cos^2r_1^2)\cdots(\cos^2r_{i-1}^2)(\sin^2r_i^2)(\cos^2r_{i+1}^2)
  \cdots(\cos^2r_n^2)\rgt\}\ge0. 
\end{align}
 Then, by the defining inequality~\eqref{eq:confoli}, 
the hyperplane field $\zeta=\ker\omtw$ is a confoliation. 

%
%
\begin{rem}\label{rem:3dimomtw}
  In the case of dimension $3$, the hyperplane field $\zeta=\ker\omtw$ 
is a contact structure. 
 $(S^1\times\R^2,\zeta)$ is nothing but the original Lutz tube. 
 In fact, $\omtw=(\cos r_1^2)d\phi+(\sin r_1^2)d\theta_1$. 
\end{rem} 

  A generalization of the Lutz tube for higher dimensions is obtained
from the prototype $\lft(S^1\times\R^{2n},\zeta\rgt)$ above 
by deforming the confoliation $\zeta$ into a contact structure 
after some procedures (See in Subsection~\ref{sec:defGenLztw}). 
 In order to apply the approximation result, Theorem~\ref{thm:approx}, 
we need the following. 
%
%
\begin{prop}\label{prop:conductive}
  The confoliation $\zeta=\ker\omtw$ on $S^1\times\R^{2n}$ is conductive. 
\end{prop}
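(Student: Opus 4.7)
The plan is to verify conductivity directly from the definition. First I would identify the hot zone from formula~\eqref{eq:protoctf}: writing $c_i:=\cos r_i^2$ and $s_i:=\sin r_i^2$, the sum of squares appearing there vanishes precisely when at least two of the $c_i$ vanish simultaneously, so
\begin{equation*}
H(\zeta)=\{x\in S^1\times\R^{2n}\mid\text{at most one }c_i(x)=0\},
\end{equation*}
and the non-contact locus $N:=M\setminus H(\zeta)$ is the closed subset of points at which two or more of the $c_i$ vanish.

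The key step, and the main obstacle, is to show that at every $x\in H(\zeta)$ and every $i$ the radial vector $\partial/\partial r_i$ lies in $\nul{\tau(\omtw)}^{\perp}$. I would use the Hodge identity $\iota_X\ast\mu=\pm\ast(X^{\flat}\wedge\mu)$ to rewrite $X\intp\tau(\omtw)=0$ as the condition that $X^{\flat}$ annihilates $\omtw\wedge(d\omtw)^{n-1}$ in $T^{\ast}_xM$. Because $\omtw$ is a contact form throughout $H(\zeta)$, linear Darboux at the single point $x$ supplies a cobasis $(\omtw,\eta_1,\zeta_1,\ldots,\eta_n,\zeta_n)$ of $T^{\ast}_xM$ with $d\omtw=\sum_i\eta_i\wedge\zeta_i$, so that $(d\omtw)^{n-1}=(n-1)!\sum_i\prod_{j\neq i}(\eta_j\wedge\zeta_j)$. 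Expanding $\beta\wedge\omtw\wedge(d\omtw)^{n-1}$ for $\beta=c_0\omtw+\sum c_i\eta_i+\sum d_i\zeta_i$, the surviving terms are $\pm c_i$ times the basis $2n$-form missing $\zeta_i$ and $\pm d_i$ times the basis $2n$-form missing $\eta_i$; these are $2n$ distinct basis $2n$-forms, so the annihilator is exactly $\R\cdot\omtw$. Thus $\nul{\tau(\omtw)}=\R\cdot\omtw^{\sharp}$. Since $\omtw=(\prod_ic_i)d\phi+\sum_is_id\theta_i$ carries no $dr_i$ term, in the flat Euclidean metric $\omtw^{\sharp}$ is a combination of $\partial/\partial\phi$ and the $\partial/\partial\theta_j$, each of which is orthogonal to every $\partial/\partial r_i$. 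Hence $\partial/\partial r_i\in\nul{\tau(\omtw)}^{\perp}$ at every hot point.

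To connect an arbitrary $x_0\in M$ to $H(\zeta)$, set $I:=\{i:c_i(x_0)=0\}$; if $|I|\le1$ then $x_0\in H(\zeta)$ and the constant path suffices. Otherwise fix $i_0\in I$ and choose small $\epsilon_i>0$ so that for every $t\in(0,1]$ and $i\in I\setminus\{i_0\}$ the quantity $(r_i(x_0)+\epsilon_it^2)^2$ avoids the set $\{(2k+1)\pi/2:k\in\Z\}$, and set
\begin{equation*}
\gamma(t):=x_0+\sum_{i\in I\setminus\{i_0\}}\epsilon_it^2\,\frac{\partial}{\partial r_i},\qquad t\in[0,1].
\end{equation*}
Then $\gamma(0)=x_0$ and $\dot\gamma(0)=0$, while for $t>0$ only $c_{i_0}(\gamma(t))$ still vanishes, so $\gamma(t)\in H(\zeta)$ and its velocity $\dot\gamma(t)=\sum 2\epsilon_it\,\partial/\partial r_i$ lies in $\nul{\tau(\omtw)(\gamma(t))}^{\perp}$ by the previous paragraph. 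This yields the required admissible path and proves conductivity. The Darboux-basis annihilator computation is the only nontrivial step; it applies uniformly on $H(\zeta)$, including the ``transitional'' hot points at which exactly one $c_i$ vanishes, because $\omtw$ is contact throughout $H(\zeta)$ by construction.
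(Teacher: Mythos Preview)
Your proof is correct and follows the same overall strategy as the paper: identify the non-contact locus, show that the radial directions $\partial/\partial r_i$ lie in $\nul{\tau(\omtw)}^{\perp}$ throughout the hot zone, and then escape the non-contact locus along a path in radial directions that has vanishing derivative at the starting point (so that the tangency condition is trivially satisfied where $\tau$ vanishes).

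The one genuine methodological difference is in how you establish that the radial directions are admissible. The paper computes $\tau(\omtw)=\ast\{\omtw\wedge(d\omtw)^{n-1}\}$ explicitly and reads off from the formula that every term in $\tau(\omtw)$ contains some $dr_j$, so $\nul{\tau(\omtw)}\subset\spn{\partial/\partial\phi,\partial/\partial\theta_1,\ldots,\partial/\partial\theta_n}$, whence the radial directions lie in the orthogonal complement. You instead invoke the Hodge identity $\iota_X(\ast\mu)=\pm\ast(X^{\flat}\wedge\mu)$ together with a pointwise Darboux cobasis to show, more sharply, that $\nul{\tau(\omtw)}=\R\cdot\omtw^{\sharp}$ at every contact point, and then use that $\omtw$ has no $dr_i$ component. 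Your argument is shorter and gives a more precise description of $\nul{\tau(\omtw)}$, at the cost of relying on the pointwise linear-algebra fact that the annihilator of $\omtw\wedge(d\omtw)^{n-1}$ under wedging is one-dimensional; the paper's explicit computation, while longer, is entirely self-contained. For the connecting path the paper pushes along the full Euler field $\sum r_i\,\partial/\partial r_i$ (reparametrized so the initial velocity vanishes), whereas you move only those $r_i$ with $i\in I\setminus\{i_0\}$; both choices work for the same reason.
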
 

\begin{proof}
  According to the definition of conductivity 
(see Subsection~\ref{sec:confoli}), 
we show that any point in the non-contact locus 
$\Sigma(\zeta)=(S^1\times\R^{2n})\setminus H(\zeta)$ of the confoliation $\zeta$
is connected by a path tangent to $\nul{\tau(\omtw)}^\perp$ 
to a point in the hot zone $H(\zeta)$, where $\zeta$ is contact. 

  The non-contact locus $\Sigma(\zeta)$ of $\zeta=\ker\omtw$ is obtained 
from Inequality~\eqref{eq:protoctf} as follows: 
\begin{align}\label{eq:nonctloc}
  \Sigma(\zeta)
  &=\lft\{(\phi,r_1,\theta_1,\dots,r_n,\theta_n)\in S^1\times\R^{2n}\;\lft|\;
  \omtw\wedge(d\omtw)^n=0\rgt.\rgt\} \notag \\
  &=\bigcup_{\substack{i,j=1\\i\ne j}}^n\lft\{\cos r_i^2=0,\ \cos r_j^2=0\rgt\} 
   =\bigcup_{\substack{i,j=1\\i\ne j}}^n\lft\{(\cos r_i^2)^2+(\cos r_j^2)^2=0\rgt\} 
   \notag \\
  &=\bigcup_{\substack{i,j=1\\i\ne j}}^n
  \lft\{\lft.r_i=\sqrt{\lft(\frac{1}{2}+l\rgt)\pi},\ 
  r_j=\sqrt{\lft(\frac{1}{2}+m\rgt)\pi}\ \rgt|\ l,m\in\mathbb{N}\rgt\}. 
\end{align}
 Then the non-contact locus $\Sigma(\zeta)$ has a stratification 
each of whose strata is of the form $\{r_{i_1}=c_{i_1},r_{i_2}=c_{i_2},\dots\}$, 
where $i_1,i_2,\dots\in\{1,2,\dots,n\}$, and $c_{i_k}$ are constant. 
 The codimension of the highest dimensional stratum is two. 

  Next, we observe the constraint 
imposed on paths in $(S^1\times\R^{2n},\zeta)$. 
 The $2$-form $\tau$, 
in the definition of conductivity in Subsection~\ref{sec:confoli}, 
for $\omtw$ is calculated as follows: 
\begin{align}
  \omtw\wedge(d\omtw)^{n-1}
  =&2^{n-1}(n-1)!\lft\{\sum_{i\ne j}(\cos^1r_j^2)(\cos^2r_1^2)\cdots
  \widehat{(\cos^2r_i^2)}\cdots\widehat{(\cos^2r_j^2)}
  \cdots(\cos^2r_n^2)\rgt. \notag \\
  &\quad\qquad\qquad\lft.\lft.d\phi\wedge(r_1dr_1\wedge d\theta_1)\wedge
  \dots\wedge\widehat{(r_{j}dr_j\wedge d\theta_j)}\wedge
  \dots\wedge(r_{n}dr_n\wedge d\theta_n)\rgt.\rgt. \notag \\
  &\lft.\lft.+\sum_{j=1}^n(\sin r_j^2)(\cos r_1^2)\cdots\widehat{(\cos r_j^2)}
  \cdots(\cos r_n^2)\rgt.\rgt. \notag \\
  &\quad\qquad\qquad\lft.\lft.d\theta_j\wedge(r_1dr_1\wedge d\theta_1)
  \wedge\cdots\wedge\widehat{(r_{j}dr_j\wedge d\theta_j)}\wedge
  \cdots\wedge(r_{n}dr_n\wedge d\theta_n)\rgt.\rgt. \notag \\
  &\lft.\lft.+\sum_{i\ne j}(\sin r_i^2)(\sin r_j^2)(\cos r_j^2)
  (\cos^2 r_1^2)\cdots\widehat{(\cos^2 r_i^2)}\cdots\widehat{(\cos^2 r_j^2)}
  \cdots(\cos^2 r_n^2)\rgt.\rgt. \notag \\
  &\quad\qquad\qquad\lft.\lft.d\phi\wedge(r_{i}dr_i)\wedge d\theta_j
  \wedge(r_1dr_1\wedge d\theta_1)\wedge
  \dots\wedge\widehat{(r_{i}dr_i\wedge d\theta_i)}\wedge\cdots \rgt.\rgt. 
    \notag\\
  &\hspace{5cm}
  \wedge\widehat{(r_{j}dr_j\wedge d\theta_j)}\wedge
  \dots\wedge(r_{n}dr_n\wedge d\theta_n)\Biggr\}, 
\end{align}
then 
\begin{align}
  \tau(\omtw)=&\ast\lft\{\omtw\wedge(d\omtw)^{n-1}\rgt\} \notag \\
  =&2^{n-1}(n-1)!\lft\{\sum_{i\ne j}(\cos r_j^2)(\cos^2r_1^2)\cdots
  \widehat{(\cos^2r_i^2)}\cdots\widehat{(\cos^2r_j^2)}\cdots(\cos^2r_n^2)
  \ (r_{j}dr_j)\wedge d\theta_j\rgt. \notag \\
  &+\sum_{j=1}^n(\sin r_j^2)(\cos r_1^2)\cdots\widehat{(\cos r_j^2)} 
  \cdots(\cos r_n^2)\ d\phi\wedge(r_{j}dr_j) \notag \\
  &\lft.+\sum_{i\ne j}(\sin r_i^2)(\sin r_j^2)(\cos r_j^2)
  (\cos^2 r_1^2)\cdots\widehat{(\cos^2 r_i^2)}\cdots\widehat{(\cos^2 r_j^2)}
  \cdots(\cos^2 r_n^2)\ d\theta_i\wedge(r_{j}dr_j)\rgt\} \notag \\
  =&2^{n-1}(n-1)!\sum_{i\ne j}(\cos r_1^2)\cdots
  \widehat{(\cos r_i^2)}\cdots\widehat{(\cos r_j^2)}\cdots(\cos r_n^2) \notag \\
  &\hspace{2.5cm}\lft[(\cos r_1^2)\cdots\widehat{(\cos r_i^2)}\cdots(\cos r_n^2)
  \lft\{(\sin r_i^2)(\sin r_j^2)d\theta_i-d\theta_j\rgt\}\rgt. \notag \\
  &\hspace{7.5cm}\lft.+(\cos r_i^2)(\sin r_j^2)d\phi\rgt]
  \wedge(r_{j}dr_j), 
\end{align}
where the symbol ``$\widehat{\mbox{\qquad}}$'' implies 
to eliminate the indicated part. 
 On the non-contact locus $\Sigma(\zeta)$, 
the form $\omtw\wedge(d\omtw)^{n-1}$ vanishes from the formula above. 
 Then $\nul{\tau}^\perp=\{0\}$ there. 
 In the following, we observe $\nul\tau^\perp$ on the hot zone 
$H(\zeta)=(S^1\times\R^{2n})\setminus\Sigma(\zeta)$. 
 From the formula above, $\nul{\tau(\omtw)}$ is 
in $\spn{\uvv\phi,\uvv\theta_1,\dots,\uvv\theta_n}$ 
spanned by vector fields $\uvv\phi,\uvv\theta_1,\dots,\uvv\theta_n$. 
 It follows that vector fields $\uvv{r_1},\dots,\uvv{r_n}$ 
are in $\nul{\tau(\omtw)}$. 
 Therefore, a path always tangent to these directions 
satisfies the required condition. 

  Now, we confirm that any point in the non-contact locus $\Sigma(\zeta)$ 
is connected to a point in the hot zone 
$H(\zeta)=(S^1\times\R^{2n})\setminus\Sigma(\zeta)$. 
 Recall that $\Sigma(\zeta)$ has the stratification 
from Formula~\eqref{eq:nonctloc}. 
 Each stratum is of the form $\{r_{i_1}=c_{i_1},r_{i_2}=c_{i_2},\dots\}$. 
 Any such strata is not tangent to the vector field 
$X=r_1(\uvv{r_1})+r_2(\uvv{r_2})+\dots+r_n(\uvv{r_n})$. 
 The vector field $X$ is tangent to $\nul{\tau(\omtw)}^\perp$ 
at any point of $H(\zeta)$ from the argument in the paragraph above. 
 Then for any $x\in\Sigma(\zeta)$, we have a curve $\gamma_x(t)$, $t=[0,1]$, 
which satisfies $\dot\gamma_x(t)\in\nul{\tau(\omtw)}^\perp$, 
$\gamma_x(0)=x$, $\dot\gamma_x(0)=0$, $\gamma_x(1)\in H(\zeta)$. 

  Thus, we complete the proof. 
\end{proof}

  In order to regard the obtained manifold $\lft(S^1\times\R^{2n},\zeta\rgt)$ 
with the conductive confoliation $\zeta=\ker\omtw$ 
as a generalization of the Lutz tube, 
it should have some properties. 
 The reason why we take the Lutz tube in dimension~$3$ is 
that its contact structure is overtwisted (see Subsection~\ref{sec:lutztwist}).
 Then a generalization is supposed to be PS-overtwisted 
or overtwisted in the sense of~\cite{newOT}. 
 Actually, $\lft(S^1\times\R^{2n}, \zeta\rgt)$ has the following property. 
%
%
\begin{prop}\label{prop:exblob}
  The $(2n+1)$-dimensional manifold 
$\lft(S^1\times\R^{2n},\zeta\rgt)$ with a confoliation 
contains, in the contact part, a bordered Legendrian open book with 
an $(n-1)$-dimensional torus $T^{n-1}$ as the binding. 
\end{prop}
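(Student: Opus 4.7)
The plan is to exhibit an explicit bLob inside the model. Fix any point $\phi_0\in S^1$ and consider the $(n+1)$-dimensional submanifold
\[
  N:=\lft\{(\phi,r_1,\theta_1,\dots,r_n,\theta_n)\ \lft|\
  \phi=\phi_0,\ r_2=r_3=\dots=r_n=\sqrt{\pi},\ 0\le r_1\le\sqrt{\pi}\rgt.\rgt\}.
\]
Via the polar coordinates $(r_1,\theta_1)$ one identifies $N\cong D^2\times T^{n-1}$, with the $T^{n-1}$-factor parametrized by $(\theta_2,\dots,\theta_n)$. I declare the binding to be the core torus
\[
  B:=\{r_1=0\}\cap N\cong T^{n-1},
\]
which has trivial normal bundle (the $D^2$-factor), and define $\theta\colon N\setminus B\to S^1$ by $\theta(r_1,\theta_1,\theta_2,\dots,\theta_n):=\theta_1$.

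First I would verify that $N$ lies in the hot zone $H(\zeta)$. By Proposition~\ref{prop:conductive} (see also \eqref{eq:nonctloc}), the non-contact locus $\Sigma(\zeta)$ is the union $\bigcup_{i\ne j}\{\cos r_i^2=\cos r_j^2=0\}$, i.e.\ it requires two distinct indices on which $\cos$ vanishes. Along $N$ one has $\cos r_i^2=\cos\pi=-1\ne 0$ for every $i\ge 2$, so at most one index (namely $i=1$, at $r_1^2=\pi/2$) can satisfy $\cos r_i^2=0$; hence $N\subset H(\zeta)$.

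Next I would check the four defining conditions of a bLob by direct computation with $\omtw$. Along a fiber $\{\theta_1=s\}\subset N\setminus B$ one has $d\phi=d\theta_1=dr_2=\dots=dr_n=0$, so
\[
  \omtw|_{\text{fiber}}=\sum_{i=2}^{n}(\sin r_i^2)\,d\theta_i
  =\sum_{i=2}^{n}(\sin\pi)\,d\theta_i=0,
\]
which shows the fiber is Legendrian (and has the correct dimension $n$). The same calculation at $r_1=\sqrt{\pi}$ (where additionally $\sin r_1^2=\sin\pi=0$) yields $\omtw|_{T\rd N}=0$, so $\rd N$ is Legendrian. Transversality of the fibers to $\rd N$ is immediate, since fibers contain the $\rd/\rd r_1$-direction while $\rd N$ is the level set $\{r_1=\sqrt{\pi}\}$. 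Finally, in the tubular neighborhood $T^{n-1}\times D^2$ of $B$ given by $(\theta_2,\dots,\theta_n)\in T^{n-1}$ and $(r_1,\theta_1)\in D^2$, the map $\theta$ is by construction the polar-angle projection, so it has the required product form $(b,r_1,\theta_1)\mapsto\theta_1$.

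None of the steps is a serious obstacle; the only real choice is selecting the ``right'' values of the parameters $r_i$ for $i\ge 2$. Fixing them at $\sqrt{\pi}$ is what simultaneously (i)~kills the $d\theta_i$-terms of $\omtw$ on tangent directions to $N$ via $\sin r_i^2=0$, making the fibers and the boundary Legendrian for free, and (ii)~keeps $\cos r_i^2\ne 0$, so that only the $i=1$ strand of the non-contact locus is even potentially relevant, placing $N$ in the contact part $H(\zeta)$. Once this choice is made, all verifications reduce to the displayed one-line computations above, completing the proof.
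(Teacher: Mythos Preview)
Your proposal is correct and follows essentially the same approach as the paper: the same submanifold $N=\{\phi=\phi_0,\ r_2=\dots=r_n=\sqrt{\pi},\ 0\le r_1\le\sqrt{\pi}\}$, the same binding $B=\{r_1=0\}\cap N\cong T^{n-1}$, the same fibration by $\theta_1$, and the same verification that $N\subset H(\zeta)$ via the description \eqref{eq:nonctloc} of the non-contact locus. The only cosmetic difference is that the paper first introduces the full $S^1$-family $P$ over $\phi$ and treats the case $n=1$ separately, whereas your argument handles all $n$ uniformly.
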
 
\begin{proof}
  In the $3$-dimensional case (i.e.\ $n=1$), 
$\zeta=\ker\omtw$ is already a contact structure, 
and $(S^1\times\R^2,\zeta)$ is the Lutz tube (see Remark~\ref{rem:3dimomtw}). 
 In the Lutz tube, there exists a bordered Legendrian open book, 
an overtwisted disk in this case (see Subsection~\ref{sec:lutztwist}). 
 Then, in the rest of this proof, we deal with the case when $n>1$. 

  First, we find a bordered Legendrian open book 
in the prototype $\lft(S^1\times\R^{2n},\zeta=\ker\omtw\rgt)$, 
although it is not contact. 
 The object to be observed is the submanifold 
\begin{equation}\label{eq:blobfam}
  P:=\lft\{(\phi,r_1,\theta_1,\dots,r_n,\theta_n)\in S^1\times\R^{2n}\;\lft|\;
  r_1\le\sqrt{\pi},\ r_2=\dots=r_n=\sqrt{\pi}\rgt.\rgt\} 
\end{equation}
(see Figure~\ref{fig:tobeblob}). 
%
%
\begin{figure}[htb]
  \centering
  {\small \includegraphics[height=3.3cm]{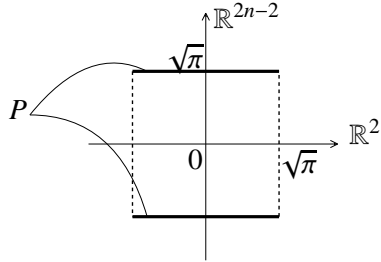}}
  \caption{Object to be a (family of) \textsf{bLob} $\pmod{\times S^1}$.}
  \label{fig:tobeblob}
\end{figure} 
 It is diffeomorphic to 
$S^1\times D^2\times S^1\times\dots\times S^1=S^1\times D^2\times T^{n-1}$. 
 We show that $P$ is an $S^1$-family of bordered Legendrian open books 
with the torus $T^{n-1}$ as bindings. 
 We should remark that the submanifold 
$P\subset\lft(S^1\times\R^{2n},\zeta\rgt)$ 
lies in the contact part $H(\zeta)\subset\lft(S^1\times\R^{2n},\zeta\rgt)$ 
of the confoliation $\zeta=\ker\omtw$. 
 In fact, recall that the con-contact locus is 
\begin{equation*}
  \Sigma(\zeta)
  =\bigcup_{\substack{i,j=1\\i\ne j}}^n\lft\{(\phi,r_1,\theta_1,\dots,r_n,\theta_n)
  \in S^1\times\R^{2n}\;
  \lft|\; r_i=\sqrt{\lft(\frac12+l\rgt)\pi},\ 
  r_j=\sqrt{\lft(\frac12+m\rgt)\pi},\ 
  l,m\in\mathbb{N}\rgt.\rgt\}
\end{equation*}
from Formula~\eqref{eq:nonctloc}. 
 By comparing this with Formula~\eqref{eq:blobfam}, 
there is no intersection between $P$ and $\Sigma(\zeta)$. 
 In other words, $P$ is in the contact part 
$H(\zeta)=(S^1\times\R^{2n})\setminus\Sigma(\zeta)$. 
 Since $P$ is in $H(\zeta)$, the notion ``Legendrian'' makes sense 
for submanifolds in $P$. 

  In order to find bordered Legendrian open books, 
we regard the submanifold $P$ as follows. 
 Set 
\begin{equation*}
  N:=\lft\{(\phi,r_1,\theta_1,\dots,r_n,\theta_n)
  \in S^1\times\R^{2n}\;
  \lft|\; \phi=\phi_0,r_1\le\sqrt{\pi},r_2=\dots=r_n=\sqrt{\pi}\rgt.\rgt\}, 
\end{equation*}
for some constant $\phi_0\in S^1$, which is diffeomorphic to $D^2\times T^{n-1}$.
 Then $P$ is regarded as an $S^1$-family of $N$. 
 Like the Lutz tube has an $S^1$-family of the overtwisted disks, 
we show that this $N$ is a bordered Legendrian open book. 
 The binding for open book structure is 
\begin{equation*}
  B:=\lft\{(\phi_0,r_1,\theta_1,\dots,r_n,\theta_n)\in S^1\times\R^{2n}\;
  \lft|\; r_1=0,r_2=\dots=r_n=\sqrt{\pi}\rgt.\rgt\}, 
\end{equation*}
which is a closed manifold, a torus $T^{n-1}$, in $\inter N$. 
 In fact, we have a fibration $p\colon N\setminus B\to S^1$ defined as 
$p\colon\lft(\phi_0,r_1,\theta_1,\sqrt{\pi},\theta_2,\dots,
\sqrt{\pi},\theta_n\rgt)\mapsto \theta_1$. 
 The fiber, or page, of $p\colon N\setminus B\to S^1$ on $\bar\theta\in S^1$ is 
\begin{align*}
  F_{\bar\theta}:=&\lft\{\lft.\lft(\phi_0,r_1,\bar{\theta},\sqrt{\pi},\theta_2,
  \dots,\sqrt{\pi},\theta_n\rgt)\in S^1\times\R^{2n}\;\rgt|\; 
  0<r_1\le\sqrt{\pi}, \theta_2\in S^1,\dots,\theta_n\in S^1\rgt\} \\
  \cong&\lft(\lft.0,\sqrt{\pi}\rgt.\rgt]\times T^{n-1}, 
\end{align*}
which is transverse in $N$ to the boundary 
\begin{equation*}
  \rd N=\lft\{\lft.\lft(\phi_0,\sqrt{\pi},\theta_1,\sqrt{\pi},\theta_2,\dots,
  \sqrt{\pi},\theta_n\rgt)\in S^1\times\R^{2n}\;\rgt|\;
  \theta_1\in S^1,\dots,\theta_n\in S^1\rgt\}\cong T^n. 
\end{equation*}
 It is because 
$TF_{\bar\theta}+T(\rd N)=\spn{\uvv{r_1},\uvv{\theta_2},\dots,\uvv{\theta_n}}
+\spn{\uvv{\theta_1},\uvv{\theta_2},\dots,\uvv{\theta_n}}=TN$. 

  Each fiber $F_{\bar{\theta}}$ and the boundary $\rd N$ are Legendrian. 
 It is explained by observing the pull-backs of $\omtw$ by the inclusions 
$i_{\bar\theta}\colon F_{\bar\theta}\hookrightarrow S^1\times\R^{2n}$ and 
$i_{\rd}\colon \rd N\hookrightarrow S^1\times\R^{2n}$. 
 We have 
\begin{equation*}
  i_{\bar\theta}^\ast\omtw
  =(\cos r_1^2)d(i_{\bar\theta}^{\ast}\phi)
  +(\sin r_1^2)d(i_{\bar\theta}^{\ast}\theta_1)=0, \qquad 
  i_{\rd}^\ast\omtw=d(i_{\rd}^{\ast}\phi)=0. 
\end{equation*}
 These equations imply that they both are Legendrian. 
 Thus, we have confirmed that $N$ is a bordered Legendrian open book. 
\end{proof}
\noindent
 We discuss the relation between $\lft(S^1\times\R^{2n},\zeta\rgt)$ 
and the higher-dimensional overtwisted disc in the sense of~\cite{newOT} 
in Section~\ref{sec:newot}. 

  Now, we have a reason why we regard what we introduced above 
as a generalization of the Lutz tube. 
%
%
\begin{dfn}
  We call the manifold $\lft(S^1\times\R^{2n},\zeta\rgt)$ 
with the confoliation $\zeta=\ker\omtw$ 
the \emph{generalized open Lutz tube}. 
\end{dfn} 

\subsection{Generalization of the Lutz twist}\label{sec:defGenLztw}
  In this subsection, we define a generalization of the Lutz twist 
along a transverse circle 
by using the generalized open Lutz tube defined in the previous subsection. 
 The definition is not a simple replacement 
with a part of the generalized open Lutz tube. 
 Before the definition, we observe the reason 
from the view point of contact round surgery. 
 First, we review the contact round surgery realization 
of the $3$-dimensional Lutz twist. 
 In order to apply the method to higher dimensions, 
we prepare certain contact manifolds to be replaced. 
 Then we define generalizations of the half and full Lutz twists. 

 \subsubsection{Review of a round surgery representation 
of the Lutz twist. }\label{sec:revrdsurgrep}
  First of all, we roughly review the outline of 
a round surgery realization of the $3$-dimensional Lutz twist 
(see \cite{art20}). 
 Although the definition of the contact round surgery will be given later 
in Section~\ref{sec:ctrdsurg}, 
the basic idea is important to define a generalization. 

  We try to execute the Lutz twist along a transverse knot $\Gamma$ 
in a contact $3$-manifold $(M,\xi)$. 
 First, we operate a contact round surgery of index~$1$ along $\Gamma$ 
and the core transverse circle $\Gamma_0=S^1\times\{0\}$ 
in the Lutz tube 
$\lft(S^1\times\R^2,\ker\{(\cos r^2)d\phi+(\sin r^2)d\theta\}\rgt)$. 
 That is a fiberwise connected sum of fibrations on $S^1$. 
 Then we operate a contact round surgery of index~$2$ 
along a certain $2$-dimensional torus 
so that the overtwisted disks are taken in. 
 The part taken in in this procedure is a solid torus, 
or a tubular neighborhood of a circle. 

  The important thing is that the core of this part is added 
by the second surgery. 
 And that the neighborhood of the core $\Gamma_0$ in the Lutz tube 
corresponds to the neighborhood of the boundary of the solid torus taken in 
(see Figure~\ref{fig:3dimsurg}, thick circles). 
%
%
\begin{figure}[htb]
  \centering
  {\small \includegraphics[width=13.8cm]{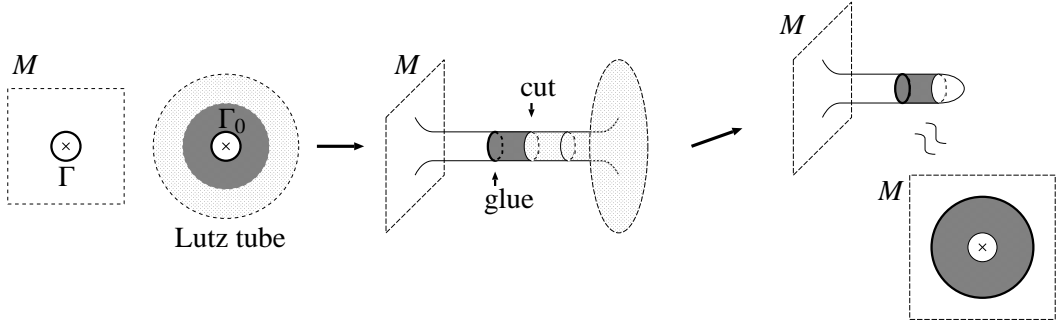}}
  \caption{Lutz twist by round surgeries $\pmod{\times S^1}$.}
  \label{fig:3dimsurg}
\end{figure} 
 From this point of view, the $3$-dimensional Lutz twist 
along a transverse knot 
is not a simple replacement. 

\subsubsection{Double of the generalized Lutz tube and Model Lutz tube. }
\label{sec:dbl-mlt}
  In order to realize the idea above in higher dimensions, 
we introduce a new notion, the double of the generalized Lutz tube. 
 This corresponds to the second round surgery of index $2n$, 
or index $2$ for the $3$-dimensional case. 
 Then we construct a tubular neighborhood of a circle with a contact structure 
to be replaced, 
by removing some tubular neighborhood of the transverse core $\Gamma_0$ 
in one of the generalized Lutz tube. 
 The obtained tubular neighborhood 
is not a part of the generalized open Lutz tube in higher dimensions. 
 For the construction bellow, we regard a disk $D^{2n}\subset\R^{2n}$ 
as a product $D^2\times\dots\times D^2\subset\C^n=\R^{2n}$. 
 Set 
\begin{equation*}
  U(r):=\lft\{\lft.(\phi,r_1,\theta_1,\dots,r_n,\theta_n)\in S^1\times\R^2
  \;\rgt|\; 0\le r_i\le r,\ i=1,2,\dots,n\rgt\}\cong S^1\times D^{2n}. 
\end{equation*}

  First, we make the double of the generalized Lutz tube. 
 Let $\lft(S^1\times\R^{2n},\zeta\rgt)$ be the generalized open Lutz tube. 
 We take the double of $\lft(U(\sqrt{\pi}),\zeta\rgt)
\subset\lft(S^1\times\R^{2n},\zeta\rgt)$. 
 In other words, we glue two $\lft(U(\sqrt{\pi}),\zeta\rgt)$ 
along their boundaries $\rd U(\sqrt{\pi})$. 
 The boundary $\rd U(\sqrt{\pi})$ is divided as follows: 
\begin{align*}
  \rd U(\sqrt{\pi})
  =&\bigcup_{i=1}^n\lft\{(\phi,r_1,\theta_1,\dots,r_n,\theta_n)\in U(\sqrt{\pi})
  \;\lft\vert\; r_i=\sqrt{\pi}\rgt.\rgt\} \\
  =&\lft\{S^1\times\lft(\rd D^2(\sqrt{\pi})\times D^2(\sqrt{\pi})\times\dots 
  \times D^2(\sqrt{\pi})\rgt)\rgt\}\cup\cdots \\
  &\qquad\qquad\cdots\cup\lft\{S^1\times\lft(D^2(\sqrt{\pi})\times\dots
  \times D^2(\sqrt{\pi})\times\rd D^2(\sqrt{\pi})\rgt)\rgt\}. 
\end{align*}
 Let $(\rd U)_i$, $i=1,2,\dots,n$, denote each part 
\begin{align*}
  &\lft\{(\phi,r_1,\theta_1,\dots,r_n,\theta_n)\in S^1\times\R^2\;\lft\vert\; 
  r_1\le\sqrt{\pi},\dots,r_i=\sqrt{\pi},\dots,r_n\le\sqrt{\pi}\rgt.\rgt\} \\
  \cong
  &S^1\times D^2\times\dots\times\overset{(i)}{\rd D^2}\times\dots\times D^2. 
\end{align*}
 At each part $(\rd U)_i$ of the boundary $\rd U(\sqrt{\pi})$, 
the confoliation $1$-form is 
\begin{equation*}
  \omtw|_{(\rd U)_i}=-\lft(\prod_{\substack{j=1\\ j\ne i}}^n(\cos r_j^2)\rgt)d\phi
  +\sum_{\substack{j=1\\ j\ne i}}^{n}(\sin r_j^2)d\theta_j. 
\end{equation*}
 Note that it is independent of $r_i$ and $dr_i$. 
 Therefore, confoliation hyperplane fields 
on two $\lft(U(\sqrt{\pi}),\zeta\rgt)$ 
agree at each $(\rd U)_i$ as boundaries of opposite sides. 
 Near the part $(\rd U)_i$ of the boundary $\rd U(\sqrt{\pi})$, 
there exists the standard tubular (collar) neighborhood 
$(\rd U)_i\times(-\epsilon,\epsilon)\cong
(S^1\times D^2\times\dots\times\rd D^2\times\dots\times D^2)
\times(-\epsilon,\epsilon)$ with the confoliation $1$-form 
\begin{equation*}
  \bar\omega_{\mathtt{tw}}
  =(\cos t)\lft(\prod_{\substack{j=1\\ j\ne i}}^n\cos r_j^2\rgt)d\phi 
  +(\sin t)d\theta_i+\sum_{\substack{j=1\\ j\ne i}}^n(\sin r_j^2)d\theta_j. 
\end{equation*}
 Then, by gluing two $\lft(U(\sqrt{\pi}),\zeta\rgt)$ 
to the tubular neighborhood from the both sides, 
the confoliation hyperplane fields are glued 
along the boundary $\rd U(\sqrt{\pi})=\bigcup_{i=1}^n(\rd U)_i$ 
(see Figure~\ref{fig:cnstgltube}). 
 Let $\lft(\du,\zeta'\rgt)$ denote the double. 
 The underlying manifold is diffeomorphic to $S^1\times S^{2n}$ 
after smoothing. 
 Note that there exists an $S^1$-family of the bordered Legendrian open books 
in $\lft(\du,\zeta'\rgt)$ from Proposition~\ref{prop:exblob} and its proof. 
 In fact, $\lft(U(\sqrt{\pi}),\zeta\rgt)$ contains it 
in the boundary $\rd U(\sqrt{\pi})$. 

%
%
\begin{figure}[htb]
  \centering
  {\small \includegraphics[height=3.63cm]{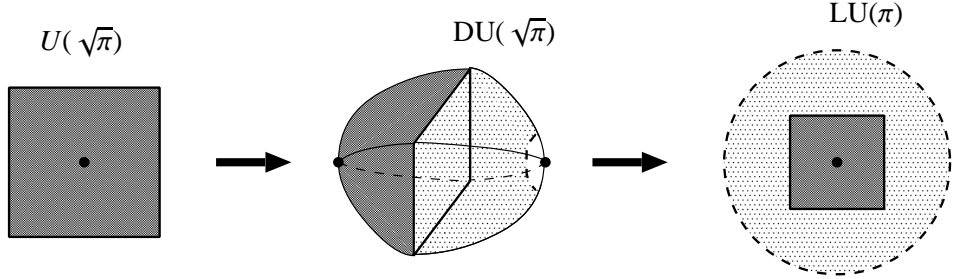}}
  \caption{Construction of the model Lutz tube $\pmod{\times S^1}$.}
  \label{fig:cnstgltube}
\end{figure} 

  Next, we perturb the confoliation $\zeta'$ on $\du\cong S^1\times S^{2n}$ 
to a contact structure 
so that it still has an $S^1$-family of bordered Legendrian open books. 
 In order to do that, 
we first carefully observe what the double $\lft(\du,\zeta'\rgt)$ is like. 
 Recall that $\lft(\du,\zeta'\rgt)$ is constructed 
by gluing two 
$\lft(U(\sqrt{\pi}),\zeta\rgt)\subset\lft(S^1\times\R^{2n},\zeta\rgt)$ 
in the generalized open Lutz tube. 
 The hyperplane field $\zeta=\ker\omtw$ is a conductive confoliation 
whose non-contact locus, in $U(\sqrt{\pi})$, is 
\begin{equation*}
  \Sigma(\zeta)
  =\bigcup_{\substack{i,j=1\\i\ne j}}^n
  \lft\{(\phi,r_1,\theta_1,\dots,r_n,\theta_n)\in S^1\times\R^{2n}\;\lft|\;
  r_i=\sqrt{\frac{\pi}{2}},\ r_j=\sqrt{\frac{\pi}{2}}\rgt.\rgt\} 
\end{equation*}
(see Equation~\eqref{eq:nonctloc}). 
 In dimension higher than $5$ (i.e.\ $n>2$), 
the non-contact locus $\Sigma(\zeta)$ intersects 
with the boundary $\rd U(\sqrt{\pi})$. 
 However, since the vector field 
$X=r_1(\uvv{r_1})+r_2(\uvv{r_2})+\dots+r_n(\uvv{r_n})$ 
is tangent to $\nul{\tau(\omtw)}^\perp$ 
(see the proof of Proposition~\ref{prop:conductive}), 
the confoliation $\zeta$ is conductive even on $U(\sqrt{\pi})$. 
 In the contact part $U(\sqrt{\pi})\setminus\Sigma(\zeta)$, 
there exists an $S^1$-family of bordered Legendrian open books 
\begin{equation*}
  P:=\lft\{(\phi,r_1,\theta_1,\dots,r_n,\theta_n)\in S^1\times\R^{2n}\;\lft|\;
  r_1\le\sqrt{\pi},\ r_2=\dots=r_n=\sqrt{\pi}\rgt.\rgt\} 
  \cong S^1\times(D^2\times T^{n-1})
\end{equation*}
whose binding is $T^{n-1}$ (see Equation~\eqref{eq:blobfam}). 
 This family $P$ lies in the boundary $\rd U(\sqrt{\pi})$. 
 We should remark that the family $P$ of bordered Legendrian open books 
never intersects with the non-contact locus $\Sigma(\zeta)$. 
 Then, by gluing two $\lft(U(\sqrt{\pi}),\zeta\rgt)$ as above, 
we obtain a conductive confoliation $\zeta'$ 
on a compact manifold $\du\cong S^1\times S^{2n}$. 
 There exists an $S^1$-family $P\cong S^1\times(D^2\times T^{n-1})$ 
of bordered Legendrian open books in $\lft(\du,\zeta'\rgt)$ 
which never intersects the non-contact locus $\Sigma(\zeta')$. 

  Now we perturb the conductive confoliation $\zeta'$ to a contact structure. 
 Since $\zeta'$ is a conductive confoliation on a compact orientable manifold 
$\du\cong S^1\times S^{2n}$, 
we can apply Theorem~\ref{thm:approx}. 
 Let $\omega$ be a confoliation $1$-form on $\du$ 
defining the confoliation $\zeta'$. 
 Then we have a contact form $\alpha$ on $\du$ 
which is $C^\infty$-close to $\omega$. 
 Although $\alpha$ is contact, it is not clear if it still has 
an $S^1$-family of bordered Legendrian open books. 
 In order to make it sure, we adjust $\alpha$ as follows. 
 Let $U\subset\du$ be a small open tubular neighborhood 
of $P\sqcup\Gamma_0\sqcup\Gamma_1$, 
that is, an $S^1$-family $P$ of bordered Legendrian open books 
and transverse curves $\Gamma_i\subset\lft(\du,\zeta'\rgt)$ 
corresponding to the core transverse curve 
of $\lft(U(\sqrt{\pi}),\zeta\rgt)\subset\lft(S^1\times\R^{2n},\zeta\rgt)$. 
 Let $V\subset\du$ be a small open tubular neighborhood 
of the non-contact locus $\Sigma(\zeta')\subset\lft(\du,\zeta'\rgt)$. 
 We may assume $\bar U\cap\bar V=\emptyset$. 
 There exists a function $f\colon \du\to[0,1]$ 
that satisfies the following conditions: 
(i) $f|_{\bar U}\equiv 1$, 
(ii) $\operatorname{supp}(f)\subset\du\setminus\bar V$. 
 With this function $f$, set
\begin{equation*}
  \tilde\alpha:=(1-f)\alpha+f\omega=\alpha+f(\omega-\alpha). 
\end{equation*}
 Then it is a contact form on $\du$ 
since the contact form $\alpha$ is $C^\infty$-close 
to the confoliation $1$-form $\omega$. 
In fact, 
\begin{equation*}
  d\tilde\alpha=d\alpha+df\wedge(\omega-\alpha)+f(d\omega-d\alpha)
  =df\wedge(\omega-\alpha)+\{d\alpha+f(d\omega-d\alpha)\}, 
\end{equation*}
then 
\begin{align*}
  \tilde\alpha\wedge(d\tilde\alpha)^n=&\alpha\wedge(d\alpha)^n
  +(d\omega-d\alpha)\wedge
  \sum_{i=1}^n\lft\{{}_{n}C_{i}f^i(d\omega-d\alpha)^{i-1}\wedge(d\alpha)^{n-i}\rgt\}
  \\
  &+(\omega-\alpha)\wedge\lft\{d\alpha+f(d\omega-d\alpha)\rgt\}^{n-1}\wedge
  \lft\{n\alpha\wedge df+f(d\alpha+f(d\omega-d\alpha))\rgt\}. 
\end{align*}
 Taking the contact form $\alpha$ 
sufficiently $C^\infty$-close to the confoliation $1$-form $\omega$, we have 
$\tilde\alpha\wedge(d\tilde\alpha)^n>0$. 
 Let $\tilde\zeta$ denote the contact structure $\ker\tilde\alpha$ on $\du$. 
 Further, we have $\tilde\alpha|_{\bar U}=\omega$, 
and $\tilde\alpha|_{\bar V}=\alpha$. 
 In other words, $P$ is an $S^1$-family of bordered Legendrian open books, 
and $\Gamma_i$ are transverse circles for the contact structure $\tilde\zeta$. 
 Then this $\tilde\zeta=\ker\tilde\alpha$ is the required contact structure 
on $\du$. 

  In order to change the double $\du\cong S^1\times S^{2n}$ 
to a tubular neighborhood of a circle, namely $S^1\times D^{2n}$, 
we remove a tubular neighborhood of a certain circle. 
 The embedded circles $\Gamma_i\subset\lft(\du,\tilde\zeta\rgt)$, $i=1,2$, 
corresponding to 
$\Gamma_0=S^1\times\{0\}\subset\lft(U(\sqrt{\pi}),\zeta\rgt)$ 
are transverse to the contact structure $\tilde\zeta$. 
 We remove the interior of the standard tubular neighborhood 
$U_1\subset\du$ of $\Gamma_1$, 
which is contactomorphic to 
$\lft(U_\delta, \ker\{d\phi+\sum_{i=1}^{n}r_i^2d\theta_i\}\rgt)$ 
for some radius $\delta>0$. 
 The obtained tubular neighborhood 
$\lft(\mlt,\tilde\zeta\rgt):=\lft(\du\setminus \inter U_1,\tilde\zeta\rgt)$ 
of the transverse circle $\Gamma_2$ 
is the required model to be replaced with the standard tubular neighborhood. 
 We call it the \emph{model $\pi$-Lutz tube}. 
 Note that $\lft(\mlt,\tilde\zeta\rgt)$ has an $S^1$-family 
of bordered Legendrian open books 
because the family $P\subset\lft(\du,\tilde\zeta\rgt)$ lies far 
from the core transverse curves $\Gamma_i\subset\lft(\du,\tilde\zeta\rgt)$ 
from the construction. 

\subsubsection{The generalized half Lutz twist}\label{sec:ghltw}
  Now, we define a generalization of the Lutz twist. 
 It is defined as a modification of a contact structure 
on a $(2n+1)$-dimensional manifold along an embedded transverse circle. 
 As a result, the operation makes an $S^1$-family 
of bordered Legendrian open books with a torus $T^{n-1}$ as binding. 
 In this subsubsection, 
we define a generalization of the so-called half Lutz twist. 

 This operation can also be defined by using contact round surgeries 
(see Subsection~\ref{sec:ltwtrvknt}, and Figure~\ref{fig:l1tw}). 

  The situation is as follows.  
 Let $(M,\xi)$ be a $(2n+1)$-dimensional contact manifold, 
and $\Gamma\subset(M,\xi)$ an embedded transverse circle. 
 From Corollary~\ref{cor:s1dar}, there exists the standard tubular neighborhood
$U\subset(M,\xi)$ of $\Gamma$, 
which is contactomorphic to 
$\lft(U_\epsilon,\ker\{d\phi+\sum_{i=1}^{n}r_i^2d\theta_i\}\rgt)$, 
where 
$U_\epsilon=\lft\{\sum_{i=1}^{n}r_i^2\le\epsilon^2\rgt\}\subset S^1\times\R^2$,
$(\phi,r_i,\theta_i)$ are the cylindrical coordinates of $S^1\times\R^{2n}$. 
 The generalized Lutz twist is to be defined 
by replacing the standard tubular neighborhood of $\Gamma$ 
with a certain tubular neighborhood.

  The model $\pi$-Lutz tube $\lft(\mlt,\tilde\zeta\rgt)$ obtained above 
is replaced with the standard tubular neighborhood as follows. 
 The theory of the $\xi$-round hypersurfaces 
(see Subsubsection~\ref{sec:xi-rdhyps}) is used. 
 Recall that we can take the standard tubular neighborhood 
$\lft(U_\epsilon,\xi_0=\ker\{d\phi+\sum_{i=1}^{n}r_i^2d\theta_i\}\rgt)$ 
for the transverse circle $\Gamma$ 
so that the boundary $\rd U_\epsilon$ is a $\xi_0$-round hypersurface 
modeled on the standard contact sphere $(S^{2n-1},\eta_0)$ 
(see Example~\ref{expl:xirdmostctsph}). 
 In the same way, we may assume that the boundary $\rd(\mlt)$ 
of the model $\pi$-Lutz tube $\lft(\mlt,\tilde\zeta\rgt)$ 
is a $\tilde\zeta$-round hyper surface. 
 From the construction, it is regarded as a $\xi_0$-round hypersurface 
modeled on the standard contact sphere $(S^{2n-1},\eta_0)$. 
 By Proposition~\ref{prop:xi-rdhyps}, $(M\setminus \inter U_\epsilon,\xi)$ 
and $\lft(\mlt,\tilde\zeta\rgt)$ are glued 
so that their longitudes and meridian spheres agree. 
 As a result, we obtain a manifold which is diffeomorphic to the given $M$ 
with a contact structure which is modified from the given $\xi$ 
along the given transverse circle $\Gamma$. 

  We call the modification that we have defined above 
the \emph{generalized Lutz twist}\/ along a transverse circle $\Gamma$ 
in a contact manifold $(M,\xi)$. 
 In dimension $3$, it is the so called $\pi$-Lutz twist. 
 Then, if necessary, we call the generalization 
the \emph{generalized half (or $\pi$-) Lutz twist}. 

%
%
\begin{rem}
  In dimension $3$, the operation above 
is the original $3$-dimensional half (or $\pi$-) Lutz twist. 
 Some rough reason is in Subsubsection~\ref{sec:revrdsurgrep}. 
 More precisely, it is explained as follows. 
 In dimension $3$, the generalized Lutz tube 
is the ordinary $3$-dimensional Lutz tube $(S^1\times\R^2,\zeta)$, 
where $\zeta=\ker\{\cos r^2\;d\phi+\sin r^2\;d\theta\}$ 
(see Remark~\ref{rem:3dimomtw}). 
 And $U(\sqrt{\pi})\subset(S^1\times\R^2,\zeta)$ consists of 
a family of overtwisted disks. 
 The important thing is that $(U(\sqrt{\pi})\setminus U_\epsilon,\zeta)$ 
is contact-embedded into 
$\lft(\{(\phi,r,\theta)\in S^1\times\R^2\mid \pi<r^2<2\pi\},\zeta\rgt)$ 
so that $\rd U(\sqrt{\pi})$ agrees 
with $\{r^2=\pi\}\subset(S^1\times\R^2,\zeta)$. 
 This implies the model $\pi$-Lutz tube $(\mlt,\zeta')$ 
is embedded into $\{r^2<2\pi\}\subset(S^1\times\R^2,\zeta)$. 
 Therefore, it is the original $3$-dimensional $\pi$-Lutz twist. 
\end{rem} 

\subsubsection{The generalized full Lutz twist}\label{sec:gfltw}
  We can construct a generalization of the full Lutz twist, 
using $\lft(U(\sqrt{3\pi/2}),\zeta\rgt)
\subset\lft(S^1\times\R^{2n},\zeta\rgt)$ 
instead of $\lft(U(\sqrt{\pi}),\zeta\rgt)$ in the previous procedure. 

  It is constructed as a modification of a contact structure $\xi$ 
on a $(2n+1)$-dimensional manifold $M$ 
along an embedded transverse circle $\Gamma\subset(M,\xi)$. 
 Removing the standard tubular neighborhood of $\Gamma$ and 
gluing back a tubular neighborhood with a certain contact structure, 
we obtain a new contact structure on $M$. 
 A new contact structure, or the contact structure on a tubular neighborhood 
to be replaced, is supposed to have a bordered Legendrian open book. 
 Especially, it is also supposed to be homotopic to the original one 
as almost contact structures 
(see Subsubsection~\ref{sec:homotalmct}). 

  It is left to construct a contact structure 
on a tubular neighborhood of a circle to be replaced. 
 Take the double 
of $\lft(U(\sqrt{3\pi/2}),\zeta\rgt)\subset\lft(S^1\times\R^{2n},\zeta\rgt)$ 
and make it a contact manifold $\lft(\du[\sqrt{3\pi/2}],\tilde\zeta_2\rgt)$ 
by Theorem~\ref{thm:approx} and the discussion in the previous subsubsection. 
 Then remove the standard tubular neighborhood 
of the transverse core of one of two $\lft(U(\sqrt{3\pi/2}),\zeta\rgt)$. 
 The obtained contact manifold $\lft(\mlt[2\pi],\tilde\zeta_2\rgt)$, 
which is diffeomorphic to $S^1\times D^{2n}$, 
is the required one. 
 We call it the \emph{model $2\pi$-Lutz tube}. 
 We can apply the same argument as in Subsubsection~\ref{sec:ghltw} 
to this $\lft(\mlt[2\pi],\tilde\zeta_2\rgt)$ 
to obtain the modified contact structure. 
 We call this procedure 
the \emph{generalized full \textup{(}or $2\pi$-\textup{)} Lutz twist}. 
 Note that $\lft(\mlt[2\pi],\tilde\zeta_2\rgt)$ has an $S^1$-family 
of the bordered Legendrian open books because 
$\lft(U(\sqrt{3\pi/2}),\zeta\rgt)
\supset\lft(U(\sqrt{\pi}),\zeta\rgt)$ has it. 

  We can check easily that the contact hyperplane $\tilde\zeta_2$, 
or the contact form as a covector, rotates more than $2\pi$ 
along the each radius $r_i$, $i=1,2,\dots,n$,  
moving from the center to the boundary of $\mlt[2\pi]$. 
 The important property to call it a ``full'' twist is studied 
in Subsubsection~\ref{sec:homotalmct}. 

\subsubsection{The $k$-fold generalized Lutz twist}\label{sec:k-fold}
  In a similar way, we can define the $k$-fold generalized Lutz twist 
for any positive integer $k\in\mathbb{N}$. 
 In this case, we use $\lft(U(\sqrt{(k+1)\pi/2}),\zeta\rgt)
\subset\lft(S^1\times\R^{2n},\zeta\rgt)$ 
instead of $\lft(U(\sqrt{\pi}),\zeta\rgt)$. 
 Then we obtain the \emph{model $k\pi$-Lutz tube}\/ 
$\lft(\mlt[k\pi],\tilde\zeta_k\rgt)$ 
from the double 
\begin{equation*}
  \lft(\du[\sqrt{(k+1)\pi/2}],\zeta'_k\rgt)
  =\lft(U(\sqrt{(k+1)\pi/2}),\zeta\rgt)
  \cup\lft(U(\sqrt{(k+1)\pi/2}),\zeta\rgt).
\end{equation*}
 The \emph{$k$-fold generalized Lutz twist}\/ is defined 
as the replacement of the standard tubular neighborhood of a transverse circle 
with the model $k\pi$-Lutz tube $\lft(\mlt[k\pi],\tilde\zeta_k\rgt)$. 
 Since $\lft(\mlt[k\pi],\tilde\zeta_k\rgt)$ includes an $S^1$-family 
of bordered Legendrian open books, 
the resulting contact structure is PS-overtwisted. 
 In terms of $k$-fold twists, the generalized half and full Lutz twists 
correspond to $k=1,2$ respectively. 

\subsection{Important properties of Half and full Lutz twists}
\label{sec:handftwists}
  In this section, we discuss important properties 
of the half and full generalized Lutz twists. 
 First, we discuss the contribution of the generalized ``half'' Lutz twist 
to the Euler class of a contact structure 
in Subsubsection~\ref{sec:eulerclass}. 
 Then, in Subsubsection~\ref{sec:homotalmct}, 
we show that the generalized ``full'' Lutz twist does not change 
the homotopy class as almost contact structures of a contact structure. 
 At the end of this subsection, in Subsubsection~\ref{sec:findtrvs1}, 
we show that the generalized Lutz twist can be operated anywhere. 

\subsubsection{Contribution of the generalized half Lutz twist 
  to the Euler class}\label{sec:eulerclass}
  We discuss the important property of the generalized half Lutz twists. 
 They may change the Euler class of a contact structure. 
 It is well known that, in dimension~$3$, the claim is true 
(see \cite{geitext}). 
 The generalization of the half Lutz twist 
obtained in Subsection~\ref{sec:defGenLztw} also has the same property. 
 It contributes to the Euler class of a contact structure. 
 This is an answer to a question in \cite{eptw} 
asking the existence of such a modification. 
 The claim is as follows. 

%
%
\begin{prop}\label{prop:cntrbtoE}
  Let $(M,\xi)$ be a contact manifold of dimension $(2n+1)$, 
and $\Gamma\subset(M,\xi)$ an embedded positive transverse circle. 
 The generalized half Lutz twist along $\Gamma$ contributes 
to the Euler class of the contact structure $\xi$ as follows\textup{:} 
\begin{equation*}
  e(\xi^\Gamma)-e(\xi)=-2\operatorname{PD}([\Gamma])\in H^{2n}(M;\Z), 
\end{equation*}
where $\xi^\Gamma$ is the contact structure on $M$ 
obtained from $\xi$ by the generalized half Lutz twist along $\Gamma$, 
and $\operatorname{PD}([\Gamma])$ is the Poincar\'e dual 
to the homology class $[\Gamma]\in H_1(M;\Z)$. 
\end{prop}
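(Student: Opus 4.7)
The strategy is to localize the Euler-class difference to a tubular neighborhood of $\Gamma$, identify it as a universal integer $k$ times $\operatorname{PD}([\Gamma])$, and then compute $k=-2$ via a clutching-function analysis of the contact bundle.

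Since $\xi^\Gamma$ and $\xi$ agree outside a tubular neighborhood $U\cong S^1\times D^{2n}$ of $\Gamma$, the difference $e(\xi^\Gamma)-e(\xi)\in H^{2n}(M;\Z)$ lies in the image of the excision map $H^{2n}(U,\rd U;\Z)\to H^{2n}(M;\Z)$. By the Thom isomorphism for the trivial normal bundle of $\Gamma$, this image is $\Z\cdot\operatorname{PD}([\Gamma])$, so $e(\xi^\Gamma)-e(\xi)=k\cdot\operatorname{PD}([\Gamma])$ for a universal integer $k$ depending only on the local model, and the task reduces to showing $k=-2$.

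To identify $k$, I would glue $(U_\epsilon,\xi_0)$ and $(\mlt,\tilde\zeta)$ along their common $\xi$-round boundary via Proposition~\ref{prop:xi-rdhyps}, obtaining a closed contact manifold $X\cong S^1\times S^{2n}$ whose contact bundle equals $\xi_0$ on one hemisphere and $\tilde\zeta$ on the other. Pulling back $e(\xi^\Gamma)-e(\xi)$ to $X$ and evaluating on a meridional $2n$-sphere transverse to the $S^1$-core expresses $k$ as the top Chern number $\langle c_n(E),[S^{2n}]\rangle$ of the complex rank-$n$ bundle $E\to S^{2n}$ obtained by clutching the standard complex framing of $\xi_0$ with the framing of $\tilde\zeta$ along the equatorial $(2n-1)$-sphere. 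The clutching function $g\colon S^{2n-1}\to U(n)$ records how the contact frame rotates as one traverses the radial slab of $\mlt$, and by the doubling construction of $\mlt$ from $U(\sqrt{\pi})\subset\lft(S^1\times\R^{2n},\zeta\rgt)$, the normal covector to $\tilde\zeta$ undergoes a net $\pi$-rotation in the $(d\phi,d\theta_i)$-plane along every radial ray.

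In the $3$-dimensional case ($n=1$), the map $g\colon S^1\to U(1)$ has degree $-2$, giving $c_1(E)=-2$ and reproducing the classical computation. In higher dimensions, one must determine the class of $g$ in $\pi_{2n-1}(U(n))\cong\Z$; from the explicit shape of $\omtw=\prod(\cos r_i^2)d\phi+\sum(\sin r_i^2)d\theta_i$, the rotations in the various $(d\phi,d\theta_i)$-planes are coupled through the product factor $\prod(\cos r_i^2)$, and this coupling is what is needed to produce the required clutching with $\langle c_n(E),[S^{2n}]\rangle=-2$. The main obstacle is making this final step rigorous, since a naive decomposition of $g$ into block-diagonal form $(g_1,\mathrm{id}_{n-1})$ would give a trivial line-plus-trivial splitting with $c_n(E)=0$ and miss the coupling entirely. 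I would handle this by adapting the cellular decomposition of $S^{2n-1}=\rd(D^2\times\cdots\times D^2)$ given by the faces $\{r_i=\sqrt{\pi}\}$ of the polydisk, using the explicit formula for $\omtw$ together with the $C^\infty$-approximation from Subsubsection~\ref{sec:dbl-mlt} (which can be chosen to preserve $\omtw$ on each such face, since the non-contact locus $\Sigma(\zeta)$ given by Equation~\eqref{eq:nonctloc} lies in codimension two and can be avoided) to track the framing rotation cell-by-cell, assembling the individual $\pi$-rotations into a class equal to $-2\in\pi_{2n-1}(U(n))$.
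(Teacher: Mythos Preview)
Your localization step (first paragraph) is correct and matches the paper's reduction. The gap is in the computation of $k$: you set up a clutching argument in $\pi_{2n-1}(U(n))$ but do not carry it out. You yourself flag that a naive block-diagonal reading of the rotation gives $c_n(E)=0$, and your remedy---``track the framing rotation cell-by-cell'' along the polydisk faces---is only a plan, not a computation. Extracting the integer $-2$ from the coupled rotations of $\omtw$ in $\pi_{2n-1}(U(n))$ is exactly the hard part, and nothing in the proposal shows how the product factor $\prod\cos r_i^2$ assembles into that class. As written, the proof stops precisely where the content begins.

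The paper avoids this homotopy-theoretic computation entirely by a direct section argument. Using Lemma~\ref{prop:echar} (Euler class equals Poincar\'e dual of the zero locus of a generic section), it writes down an explicit section of $\xi_0$ on the standard neighborhood---the radial field $\sigma=\sum_i r_i\,\uvv{r_i}$, vanishing to first order along $+\Gamma$---and an explicit section $\sigma'$ of $\tilde\zeta$ on $\mlt$ built by gluing two pieces $\sigma_1,\sigma_2$ on the two copies of $U(\sqrt{\pi})$, vanishing to first order along $-\Gamma$ (the core of the doubled piece, whose orientation as a positive transverse circle is reversed). Since both sections equal the radial field near the boundary, they patch into global sections of $\xi$ and $\xi^\Gamma$ that agree outside $U$, and the identity $e(\xi)-\operatorname{PD}([\Gamma])=e(\xi^\Gamma)-\operatorname{PD}([-\Gamma])$ gives $k=-2$ in one line. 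This is both simpler and more robust than the clutching route: the explicit form of $\omtw$ is used to verify $\omtw(\sigma_1)=0$ directly, rather than to analyze a map into $U(n)$.
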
 

  We compare the Euler classes by using the following fact 
(see \cite{bredon} for example): 
%
%
\begin{lemma}\label{prop:echar}
  Let $\pi\colon E\to M$ be an oriented vector bundle of rank $r$ 
over an oriented manifold $M$ of dimension $n$. 
 Let $\sigma\colon M\to E$ be a section of $\pi$ transverse to the zero section,
and $Z\subset M$ the zero locus of $\sigma$. 
 Then the Euler class $e(E)\in H^r(M;\Z)$ of the bundle $\pi\colon E\to M$ 
is the Poincar\'e dual to $[Z]\in H_{n-r}(M;\Z)$. 
\end{lemma}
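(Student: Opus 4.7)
The plan is to identify $e(E)$ with the pullback of the Thom class of $E$ under $\sigma$, then realize that pullback as a cocycle supported in a tubular neighborhood of $Z$ which represents $\operatorname{PD}([Z])$.

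First I would recall that, since $E$ is an oriented rank-$r$ bundle, it carries a Thom class $\tau_E \in H^r(E, E\setminus M;\Z)$ (where $M$ is identified with the zero section), and the Euler class is by definition $e(E) = s_0^{\ast}\tau_E \in H^r(M;\Z)$, where $s_0\colon M\to E$ is the zero section. Because the space of sections of $\pi\colon E\to M$ is affine, any two sections are homotopic through sections; in particular $\sigma \simeq s_0$, and so $\sigma^{\ast}\tau_E = s_0^{\ast}\tau_E = e(E)$ in $H^r(M;\Z)$. The whole problem therefore reduces to showing that $\sigma^{\ast}\tau_E$ is Poincar\'e dual to $[Z]$.

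Next I would use the transversality hypothesis to put $\sigma$ into a standard form near $Z$. Transversality of $\sigma$ with the zero section implies that $Z\subset M$ is a submanifold of dimension $n-r$, and that the vertical part of $d\sigma$ along $Z$ provides a canonical isomorphism of the normal bundle $\nu(Z,M)$ with $E|_Z$. Choosing a tubular neighborhood $U$ of $Z$ in $M$ together with a bundle identification $U \cong \nu(Z,M) \cong E|_Z$, I can arrange that on $U$ the section $\sigma$ agrees with the tautological fiber coordinate (that is, with the inclusion of $\nu(Z,M)$ into its own total space, followed by the identification with $E|_Z$). Outside a smaller neighborhood of $Z$, $\sigma$ takes values in $E\setminus M$, so $\sigma^{\ast}\tau_E$ is represented by a cocycle supported inside $U$.

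Finally I would compute the class represented by this cocycle. Under the identification $U\cong \nu(Z,M)$, the restriction of $\sigma^{\ast}\tau_E$ to $U$ becomes the pullback of $\tau_{E|_Z}$ by the identity of $\nu(Z,M)$, which is precisely the Thom class $\tau_{\nu(Z,M)}$ of the normal bundle. The Thom class of the normal bundle of an oriented submanifold, extended by zero to all of $M$, represents the Poincar\'e dual of that submanifold; this is the classical Thom--Poincar\'e duality identification (see e.g.\ Bott--Tu). Thus $\sigma^{\ast}\tau_E = \operatorname{PD}([Z])$ in $H^r(M;\Z)$, and combining with $\sigma^{\ast}\tau_E = e(E)$ gives the claim.

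The main obstacle is the last step, namely verifying compatibly that the pullback of the ambient Thom class under $\sigma$ literally equals the Thom class of the normal bundle (not just up to an orientation or a nonzero multiple). This requires checking that the bundle isomorphism $\nu(Z,M)\cong E|_Z$ induced by $d\sigma$ respects orientations (using the given orientations of $M$, $E$, and the induced orientation on $Z$) and that the tubular-neighborhood identification is chosen so that $\sigma$ really is the tautological section on fibers; everything else is either standard homotopy-of-sections or the usual Thom-class description of Poincar\'e duality.
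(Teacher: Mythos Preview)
Your argument is correct and is essentially the standard Thom-class proof of this classical fact. However, the paper does not actually provide its own proof of this lemma: it simply states the result and refers the reader to Bredon's textbook \cite{bredon}. So there is nothing in the paper to compare against beyond noting that your approach is one of the standard textbook arguments (and indeed is close to what one finds in Bott--Tu or in Bredon).
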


\begin{proof}[Proof of Proposition~\ref{prop:cntrbtoE}]
  Recall that the generalized Lutz twist is operated 
on a tubular neighborhood of a transverse circle. 
 Then we discuss by using local models. 

  The standard model of a tubular neighborhood of a transverse circle 
in a contact manifold of dimension $2n+1$ 
is given as $S^1\times D^{2n}\subset S^1\times\R^{2n}$ 
with the standard contact structure: 
\begin{equation*}
  \xi_0=\ker\lft\{d\phi+r_1^2d\theta_1+\dots+r_n^2d\theta_n\rgt\}, 
\end{equation*}
where $(\phi,r_1,\theta_1,\dots,r_n,\theta_n)$ are the cylindrical coordinates 
(see Corollary~\ref{cor:s1dar}). 
 The radial vector field $\sigma:=r_1(\uvv{r_1})+\dots+r_n(\uvv{r_n})$
is a generic section of $\xi_0$, 
which vanishes to the first order along the core positive transverse circle 
$\Gamma=S^1\times\{0\}$. 

  The local model for the generalized half Lutz twist 
is the model $\pi$-Lutz tube $\lft(\mlt,\tilde\zeta\rgt)$ 
defined in Subsection~\ref{sec:defGenLztw}. 
 Note that the core positive transverse circle with orientation is $-\Gamma$. 
 Recall that the  model $\pi$-Lutz tube is constructed 
using the generalized open Lutz tube $\lft(S^1\times\R^{2n},\zeta\rgt)$. 
 Furthermore, the contact structure $\tilde\zeta$ is obtained 
from the confoliation $\zeta=\ker\omtw$ on $S^1\times\R^{2n}$ 
with the defining $1$-form 
\begin{align*}
  \omtw&=\lft((\cos r_1^2)(\cos r_2^2)\cdots(\cos r_n^2)\rgt)d\phi
  +(\sin r_1^2)d\theta_1+(\sin r_2^2)d\theta_2+\dots+(\sin r_n^2)d\theta_n \\
  &=\lft(\prod_{i=1}^n\cos r_i^2\rgt)d\phi+\sum_{i=1}^n(\sin r_i^2)d\theta_i
\end{align*}
by some slight perturbation as hyperplane fields 
(see Subsection~\ref{sec:defGenLztw}). 
 Then, in order to discuss the Euler class 
of the contact structure $\tilde\zeta$ on the model $\pi$-Lutz tube, 
we use the $1$-form $\omtw$. 
 In other words, we deal with 
$U(\sqrt{\pi})=\{r_i\le\sqrt{\pi},i=1,2,\dots,n\}
\subset\lft(S^1\times\R^{2n},\zeta=\ker\omtw\rgt)$ 
and $U(\sqrt{\pi})\setminus\inter U(S^1\times\{0\})
\subset\lft(S^1\times\R^{2n},\zeta=\ker\omtw\rgt)$, 
where $U(S^1\times\{0\})$ is the standard tubular neighborhood 
of the transverse circle $S^1\times\{0\}\subset\lft(S^1\times\R^{2n},\zeta\rgt)$.

  A generic section for the model $\pi$-Lutz tube is constructed as follows. 
 As we observed above, we construct generic sections 
$\sigma_1$ of $\zeta$ on $U(\sqrt{\pi})$ 
and $\sigma_2$ of $\zeta$ on $U(\sqrt{\pi})\setminus\inter U(S^1\times\{0\})$ 
so that they can be glued together along $\rd U(\sqrt{\pi})$. 
 Let $g\colon[0,+\infty)\to\R$ be a non-decreasing function 
which is constantly $0$ on $[0,\epsilon]$ 
and $1$ on $[\sqrt{\pi}-\epsilon,+\infty)$ 
for some sufficiently small $\epsilon>0$. 
 Then the vector field 
\begin{equation*}
  \sigma_1:=g(r)\sum_{i=1}^{n}r_i\uv{r_i}
  +\lft(1-g(r)\rgt)r\lft\{\lft(\sum_{i=1}^n\sin r_i^2\rgt)\uv\phi
  -\lft(\prod_{i=1}^n\cos r_i^2\rgt)\sum_{i=1}^n\uv{\theta_i}\rgt\}, 
\end{equation*}
where $r=\sqrt{r_1^2+\dots+r_n^2}$, 
is a section of $\zeta=\ker\omtw$. 
 In fact, 
\begin{equation*}
  \omtw(\sigma_1)=g(r)r\lft\{
  \lft(\prod_{i=1}^n\cos r_i^2\rgt)\lft(\sum_{i=1}^n\sin r_i^2\rgt)
  -\lft(\prod_{i=1}^n\cos r_i^2\rgt)\lft(\sum_{i=1}^n\sin r_i^2\rgt)\rgt\}=0. 
\end{equation*}
 Similarly, the vector field $\sigma_2:=-\sum_{i=1}^n r_i(\uvv{r_i})$ 
is a section of $\zeta$ which is nonzero 
on $U(\sqrt{\pi})\setminus\inter U(S^1\times\{0\})$. 
 From $\sigma_1$ and $\sigma_2$, a section $\sigma'$ of the contact structure 
for the model $\pi$-Lutz tube $\lft(\mlt,\tilde\zeta\rgt)$ is constructed 
by gluing them together. 
 In fact, at $\rd U(\sqrt{\pi})$, 
$\sigma_1=-\sigma_2=\sum_{i=1}^{n}r_i(\uvv{r_i})$. 

  We need the zero locus of this section $\sigma'$. 
 Since $\sigma_2$ is non-zero, we have only to observe $\sigma_1$. 
 The vector field $\sigma_1$ vanishes to the first order 
along the core positive transverse circle $-\Gamma$. 

  From the observations above on the zero loci of sections $\sigma$ of $\xi_0$ 
and $\sigma'$ of $\zeta$, 
and Lemma~\ref{prop:echar}, we have 
\begin{equation*}
  e(\xi)-\operatorname{PD}([\Gamma])=e(\xi^\Gamma)-\operatorname{PD}(-[\Gamma]). 
\end{equation*}
 Thus we obtain the conclusion. 
\end{proof}

\subsubsection{Homotopy class as almost contact structures}
\label{sec:homotalmct}
  We discuss the important property of the generalized full Lutz twist. 
 We show the following proposition. 
%
%
\begin{prop}
  The generalized full Lutz twist does not change 
 the homotopy class of a contact structure as almost contact structures. 
\end{prop}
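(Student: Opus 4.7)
Since the generalized full Lutz twist modifies $\xi$ only inside the standard tubular neighborhood $U_\epsilon\cong S^1\times D^{2n}$ of the transverse circle $\Gamma$---replacing it with the model $2\pi$-Lutz tube $(\mlt[2\pi],\tilde\zeta_2)$---the proof reduces to exhibiting a homotopy of almost contact structures on $S^1\times D^{2n}$ from the standard $\xi_0$ to $\tilde\zeta_2$, relative to a collar neighborhood of the boundary. My plan is to construct such a homotopy explicitly, using the fact that the ``full twist'' is geometrically a $2\pi$ loop of the contact covector that becomes null-homotopic in the ambient sphere of directions.

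First, I would express both structures via nowhere-vanishing $1$-forms on $S^1\times D^{2n}$. The standard contact structure is $\xi_0=\ker\alpha_0$ with $\alpha_0=d\phi+\sum_{i=1}^n r_i^2 d\theta_i$, while the Lutz-twisted structure is $\tilde\zeta_2=\ker\alpha_1$, where $\alpha_1$ is the contact form derived from the confoliation $\omtw$ on the doubled tube $\du[\sqrt{3\pi/2}]$ as in Subsubsection~\ref{sec:dbl-mlt}. After the boundary identification via the $\tilde\zeta_2$-round hypersurface matching used in Subsubsection~\ref{sec:ghltw}, the covector defined by $\alpha_1$ differs from that of $\alpha_0$ by a full $2\pi$ winding in each $2$-plane $\spn{d\phi,d\theta_i}$, traced as one moves radially outward from the transverse core toward the boundary.

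Next, I would build a continuous family $\alpha_t$, $t\in[0,1]$, of nowhere-vanishing $1$-forms on $S^1\times D^{2n}$ with the given endpoints $\alpha_0$, $\alpha_1$ and with $\alpha_t\equiv\alpha_0$ near the boundary for every $t$, thereby unwinding the extra $2\pi$ loop. The crucial observation is that, although a full loop in $\R^2\setminus\{0\}$ is nontrivial in $\pi_1(\R^2\setminus\{0\})=\Z$, the same loop in the ambient covector sphere $\lft(T^\ast_xM\setminus\{0\}\rgt)/\R_{>0}\simeq S^{2n}$ is null-homotopic, since $\pi_1(S^{2n})=0$ whenever $n\ge 1$. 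Concretely, the unwinding homotopy can be prescribed by gradually tilting the covector out of the $\spn{d\phi,d\theta_i}$-plane using a $dr_j$-direction and then contracting, which also directly generalizes the classical argument for the $3$-dimensional $2\pi$-Lutz twist.

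Finally, I would promote the family of cooriented hyperplane fields $\xi_t=\ker\alpha_t$ to a homotopy of almost contact structures by continuously choosing a compatible complex structure $J_t$ on $\xi_t$. Since the space of complex structures on an oriented $2n$-plane, $SO(2n)/U(n)$, is path-connected, and both $J_0$ (induced by $d\alpha_0$) and $J_1$ (induced by $d\alpha_1$) lie in this common component, such an interpolation exists and can be arranged to agree with $J_0$ near the boundary. The main obstacle will be the explicit and rigorous construction of $\alpha_t$ that stays nowhere-vanishing throughout while respecting the boundary conditions and the $S^1$-invariance of the model; this is precisely where the dimensional fact $\dim T^\ast M=2n+1\ge 3$, providing extra room to contract the covector loop, enters essentially.
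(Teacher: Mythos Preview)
Your overall strategy matches the paper's: reduce to the local model $S^1\times D^{2n}$, unwind the $2\pi$ loop of the covector by tilting into the $dr_j$-directions, and supply the almost contact data along the way. Indeed the paper makes your ``tilting'' idea explicit, writing the homotopy of $1$-forms as $\omega_t=\hat\omega_t+t(1-t)\sum_i r_i(1-r_i)\,dr_i$, where $\hat\omega_t$ interpolates the curves $(g_i^t,f_i^t)$ in the $(g_i,f_i)$-planes exactly as in your description.

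The genuine gap is in your last step. Path-connectedness of the \emph{fiber} $SO(2n)/U(n)$ does not imply that any two \emph{sections} of the associated bundle over $S^1\times D^{2n}$, relative to the boundary, are homotopic. Homotopy lifting lets you carry $J_0$ along $\xi_t$ to some $J'_1$ on $\xi_1$, but you then need $J'_1\simeq J_1$ rel boundary; the obstruction to this lies in $\pi_0$ of the space of sections rel boundary, which here is $\pi_{2n}(SO(2n)/U(n))\times\pi_{2n+1}(SO(2n)/U(n))$. For $n\ge 2$ these groups are typically nonzero (already $SO(4)/U(2)\cong S^2$ has $\pi_4=\pi_5=\Z/2$), so your argument as written does not close. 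The paper avoids this issue entirely by never separating the two steps: alongside $\omega_t$ it builds an explicit family of $2$-forms
\[
\gamma_t=d\hat\omega_t+\sum_{i=1}^n\Bigl(\prod_{j\ne i}\tfrac{dg_j^t}{dr_j}(r_j)\Bigr)A_i\,d\theta_i\wedge d\phi
\]
and checks directly that $\omega_t\wedge\gamma_t^n>0$ for large $A_i$, so $(\ker\omega_t,\gamma_t)$ is an almost contact structure for every $t$. To repair your argument you must either produce such a $2$-form explicitly, or give an honest obstruction-theoretic computation showing the relevant classes vanish in this particular situation; invoking connectedness of $SO(2n)/U(n)$ alone is not enough.
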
 
\begin{proof}
  For this proof, we construct a one-parameter family of hyperplane field 
between the given contact structure 
and the contact structure obtained by the generalized full Lutz twist. 
 Furthermore, we construct a corresponding family of non-degenerate $2$-forms 
on the hyperplane fields. 
 The generalized Lutz twist is a modification 
along an embedded circle which is transverse to the given contact structure. 
 Therefore, in order to construct the above-mentioned things, 
we may discuss by using the local model along a transverse circle. 

  The local model for the generalized full Lutz twist 
is the model $2\pi$-Lutz tube $\lft(\mlt[2\pi],\tilde\zeta_2\rgt)$ 
defined in Subsubsection~\ref{sec:gfltw}. 
 Like the proof of Proposition~\ref{prop:cntrbtoE}, 
we use the generalized open Lutz tube $\lft(S^1\times\R^{2n},\zeta\rgt)$ 
and the confoliation $\zeta=\ker\omtw$, where 
$\omtw=\lft(\prod_{i=1}^n\cos r_i^2\rgt)d\phi
+\sum_{i=1}^n\lft(\sin r_i^2\rgt)d\theta_i$. 
 In other words, we deal with 
$U(\sqrt{3\pi/2})=\lft\{r_i\le\sqrt{3\pi/2},i=1,2,\dots,n\rgt\}
\subset\lft(S^1\times\R^{2n},\zeta=\ker\omtw\rgt)$ 
and $U(\sqrt{3\pi/2})\setminus U(S^1\times\{0\})
\subset\lft(S^1\times\R^{2n},\zeta\rgt)$. 

  Although the local model is explicitly given by the $1$-form $\omtw$,  
we slightly generalize the description as in the definition 
of the $3$-dimensional Lutz twist in Subsection~\ref{sec:lutztwist}. 
 Setting
\begin{equation*}
  \hat\omega
  :=\lft(\prod_{i=1}^{n}f_i(r_i)\rgt)d\phi+\sum_{i=1}^{n}g_i(r_i)d\theta_i
  =\{f_1(r_1)\cdots f_n(r_n)\}d\phi+g_1(r_1)d\theta_1+\dots+g_n(r_n)d\theta_n
\end{equation*}
we have a $1$-form on $S^1\times\R^{2n}$. 
 If $f_i(r_i)=\cos r_i^2$, $g_i(r_i)=\sin r_i^2$, then $\hat\omega=\omtw$. 
 In the $3$-dimensional case ($n=1$), $\hat\omega$ is contact 
if a point $\lft(g_1(r_1),f_1(r_1)\rgt)$ in the $(g_1,f_1)$-plane 
rotates around the origin with respect to $r_1$ 
(see Subsection~\ref{sec:lutztwist}). 
 In the higher-dimensional case ($n>1$), situation is more complicated. 
 From homotopies of curves on the $(g_i,f_i)$-planes, $i=1,2,\dots,n$, 
we obtain a homotopy of hyperplane field 
if the curves do not pass through the origins. 
 Then the contact structure 
on $U(\sqrt{3\pi/2})\subset\lft(S^1\times\R^{2n},\zeta\rgt)$ 
is represented as the curve in Figure~\ref{fig:hmtp2pi}~(I). 
 All functions $f_i(r_i)$, $g_i(r_i)$, $i=1,2,\dots,n$, are given by the curve.
 In addition, the model $2\pi$-Lutz tube $\lft(\mlt[2\pi],\tilde\zeta_2\rgt)$ 
is represented as the thin curve consists of the real and dotted curves 
in Figure~\ref{fig:hmtp2pi}~(II) 
(Compare this with Figure~\ref{fig:LzTw}~(I)). 
 The thick dotted curve in Figure~\ref{fig:hmtp2pi}~(II) represents 
the contact structure 
on the standard tubular neighborhood of a transverse curve. 
%
%
\begin{figure}[htb]
  \centering
  {\small \includegraphics[height=4.53cm]{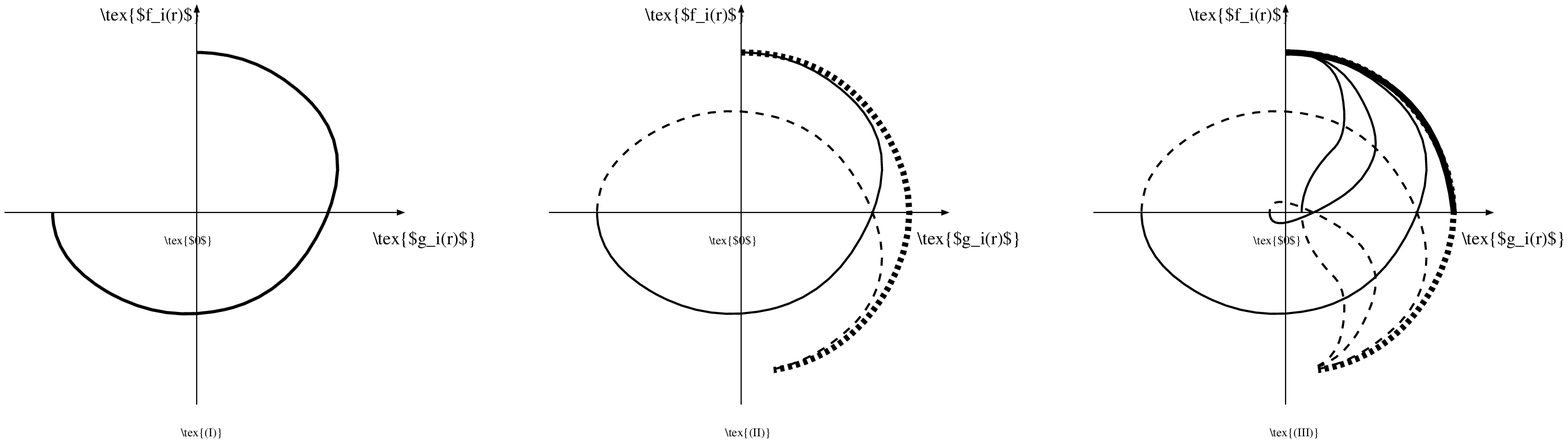}}
  \caption{Homotopy to $2\pi$-Lutz tube}
  \label{fig:hmtp2pi}
\end{figure} 
 Then, in order to construct a homotopy between the contact structure 
on the model $2\pi$-Lutz tube $\lft(\mlt[2\pi],\tilde\zeta_2\rgt)$ 
and the standard one, 
it is sufficient to observe the contact structure on $U(\sqrt{3\pi/2})$, 
that is, a half of the double $\du[\sqrt{3\pi/2}]\supset\mlt[2\pi]$. 

  A required path of hyperplane fields, 
or a path of nowhere-vanishing $1$-forms, is constructed as follows. 
 Let $\lft(g^t_i(r_i),f^t_i(r_i)\rgt)$ be a family, 
with respect to the parameter $t\in[0,1]$, 
of pairs of functions that represents a family of curves 
in Figure~\ref{fig:hmtp2pi}~(III) for each $i=1,2,\dots,n$. 
 Those are taken so that they satisfy the following: 
\begin{itemize}
\item $\displaystyle \lft(g^0_i(r_i),f^0_i(r_i)\rgt)
  =\lft(\sin\lft(\frac{\pi}{2}r_i^2\rgt),
  \cos\lft(\frac{\pi}{2}r_i^2\rgt)\rgt)$, 
\item $\displaystyle \lft(g^1_i(r_i),f^1_i(r_i)\rgt)
  =\lft(\sin\lft(\frac{3\pi}{2}r_i^2\rgt),
  \cos\lft(\frac{3\pi}{2}r_i^2\rgt)\rgt)$. 
\end{itemize}
 For such a family of pairs of functions, set 
\begin{equation*}
  \hat\omega_t
  :=\lft(\prod_{i=1}^{n}f_i^t(r_i)\rgt)d\phi+\sum_{i=1}^{n}g_i^t(r_i)\,d\theta_i
  =\lft\{f_1^t(r_1)\cdots f_n^t(r_n)\rgt\}d\phi
  +g_1^t(r_1)\,d\theta_1+\dots+g_n^t(r_n)\,d\theta_n. 
\end{equation*}
 Note that $\zeta_0:=\ker\hat\omega_0$ corresponds to the standard structure, 
and that $\zeta_1:=\ker\hat\omega_1$ corresponds to the $2\pi$-Lutz tube. 
 Then the family 
\begin{equation*}
  \omega_t:=\hat\omega_t+t(1-t)\sum_{i=1}^{n}r_i(1-r_i)dr_i
  =\lft(\prod_{i=1}^{n}f_i^t(r_i)\rgt)d\phi+\sum_{i=1}^{n}g_i^t(r_i)d\theta_i
  +t(1-t)\sum_{i=1}^{n}r_i(1-r_i)dr_i
\end{equation*}
of nowhere-vanishing $1$-forms, $t\in[0,1]$,  
determines a path of hyperplane fields from $\zeta_0$ to $\zeta_1$. 
 Let $\zeta_t$ denote the path $\ker\omega_t$, $t\in[0,1]$. 

  We verify that the family $\zeta_t$ is that of almost contact structures. 
 We give a family of $2$-forms each of which is nondegenerate 
on the hyperplane field $\zeta_t$ for each $t\in[0,1]$. 
 The following family of $2$-forms is one of the required one 
for some $A_i>0$, $i=1,2,\dots,n$, sufficiently large: 
\begin{align*}
  \gamma_t&:=d\hat\omega_t
  +\sum_{i=1}^n\lft(\prod_{j\ne i}\frac{dg_j^t}{dr_j}(r_j)\rgt)A_i\,
  d\theta_i\wedge d\phi \\
  &=\sum_{i=1}^n\lft(\prod_{j\ne i}f_j^t(r_j)\rgt)\frac{df_i^t}{dr_i}(r_i)\,
  dr_i\wedge d\phi
  +\sum_{i=1}^n\frac{dg_i^t}{dr_i}(r_i)\,dr_i\wedge d\theta_i
  +\sum_{i=1}^n\lft(\prod_{j\ne i}\frac{dg_j^t}{dr_j}(r_j)\rgt)A_i\,d\theta_i
  \wedge d\phi. 
\end{align*}
 In fact, we have
\begin{align*}
  \omega_t\wedge\gamma_t^n
  =&\left\{\lft(\prod_{i=1}^{n}f_i^t(r_i)\frac{dg_i^t}{dr_i}(r_i)\rgt)
    -\sum_{i=1}^n\lft(\prod_{j\ne i}f_j^t(r_j)\rgt)\frac{df_i^t}{dr_i}(r_i)
    \lft(\prod_{j\ne i}\frac{dg_j^t}{dr_j}(r_j)\rgt)g_i^t(r_i)\rgt. \\
    &\lft.+t(1-t)\sum_{i=1}^{n}A_i\lft(\prod_{j\ne i}\frac{dg_j^t}{dr_j}(r_j)\rgt)^2
    r_i(1-r_i)\rgt\}
    d\phi\wedge dr_1\wedge d\theta_1\wedge\dots\wedge dr_n\wedge d\theta_n \\
    =&\hat\omega_t\wedge(d\hat\omega_t)^n \\
    &+\lft\{t(1-t)\sum_{i=1}^{n}A_i
      \lft(\prod_{j\ne i}\frac{dg_j^t}{dr_j}(r_j)\rgt)^2
    r_i(1-r_i)\rgt\}
    d\phi\wedge dr_1\wedge d\theta_1\wedge\dots\wedge dr_n\wedge d\theta_n. 
\end{align*}
 The $(2n+1)$-form $\hat\omega_t\wedge(d\hat\omega_t)^n$ is non-negative 
if $t=0,1$. 
 Moreover, we may assume that $\lft(dg_i^t/dr_i\rgt)(r_i)$, $i=1,2,\dots,n$, 
vanish at mutually different points. 
 Then, for sufficiently large $A_i>0$, 
the $(2n+1)$-form $\omega_t\wedge \gamma_t^n$ is positive. 
 Then $\zeta_t=\ker\omega_t$ is a family of almost contact structures. 
\end{proof} 

\subsubsection{Finding a transverse circle}\label{sec:findtrvs1}
  We show that any embedded circle can be approximated by a transverse circle. 
 This implies that the generalized Lutz twist can be operated any where. 
  As a result, we obtain Corollary~\ref{cor:anywhere}. 

 First of all, recall that, by the well-known Chow lemma, 
any circle embedded in a contact manifold 
can be approximated to an isotropic circle. 

 We show that, for any isotropic circle, 
there exists a transverse circle close to it. 
 Let $L$ be an isotropic circle 
in a $(2n+1)$-dimensional contact manifold $(M,\xi)$. 
 From Proposition~\ref{prop:isotnbd}, 
there exists a tubular neighborhood $U\subset(M,\xi)$ of $L$ 
which is contactomorphic to some tubular neighborhood 
of the isotropic circle $S^1\times\{0\}$ in $S^1\times\R^{2n}$ 
with the contact structure 
\begin{equation*}
  \eta=\ker\lft\{(\cos\phi)dx_1-(\sin\phi)dy_1
  +\frac{1}{2}\sum_{i=2}^n(x_idy_i-y_idx_i)\rgt\},
\end{equation*}
where $(\phi,x_1,y_1,\dots,x_n,y_n)$ are coordinates of $S^1\times\R^{2n}$. 
 By the projection $\pi\colon S^1\times\R^{2n}\to S^1\times\R^2$, 
$(\phi,x_1,y_1,\dots,x_n,y_n)\mapsto(\phi,x_1,y_1)$, 
we have a Legendrian curve $\pi(L)\subset(S^1\times\R^2,\pi_\ast\eta)$ 
in the $3$-dimensional standard contact tubular neighborhood. 
 Then we can take a transverse push-off $T_+(L)$ of $L$. 
 In other words, we take one of the two parallel dividing curves 
on the boundary of the standard tubular neighborhood 
of $\pi(L)\subset(S^1\times\R^2,\pi_\ast\eta)$. 
 As a circle in $S^1\times\R^{2n}\supset S^1\times\R^2$, 
the obtained curve $T_+(L)$ 
is transverse, and is homotopic to the original curve $L$. 
 We call the obtained curve $T_+(L)\subset(M,\xi)$ 
a \emph{transverse push-off}\/ of $L$. 

\subsection{Non-compact case}\label{sec:non-cpt}
  In this subsection, we deal with generalized Lutz twists 
along non-compact transverse curves. 
 Corollary~\ref{thm:exoticR2n1} is observed here. 

  Along an embedded line, $\R$, in a contact $(2n+1)$-dimensional manifold, 
a generalization of the Lutz twist is defined as follows. 
 The generalized Lutz twist is defined in Subsection~\ref{sec:defGenLztw} 
as a modification along an embedded circle transverse to a contact structure. 
 Instead of the model Lutz tube 
$\lft(\mlt,\tilde\zeta\rgt)$ diffeomorphic to $S^1\times D^{2n}$, 
we need $\R\times D^{2n}$ with a certain contact structure. 
 Recall that the contact structure $\tilde\zeta$ is obtained 
from the confoliation $\zeta=\ker\omtw$ with 
$\omtw=\prod_{i=1}^n(\cos r_i^2)d\phi+\sum_{i=1}^n(\sin r_i^2)d\theta_i$. 
 Then the required contact structure $\bar\zeta$ on $\R\times D^{2n}$ 
is obtained from the confoliation $\ker\bar\omega_{\mathtt{tw}}$ 
with the $1$-form 
\begin{equation*} 
  \bar\omega_{\mathtt{tw}}
  :=\prod_{i=1}^n(\cos r_i^2)dz+\sum_{i=1}^n(\sin r_i^2)d\theta_i
  =\lft\{(\cos r_1^2)\cdots(\cos r_n^2)\rgt\}dz
  +(\sin r_1^2)d\theta_1+\dots+(\sin r_n^2)d\theta_n, 
\end{equation*}
where $(z,r_1,\theta_1,\dots,r_n,\theta_n)$ are the cylindrical coordinates 
of $\R^{2n+1}$. 
 In order to define the generalized Lutz twist along a transverse line, 
we can apply the same arguments as in Subsection~\ref{sec:defGenLztw} 
using the generalized Lutz tube $\lft(\R^{2n+1},\bar\zeta\rgt)$. 


\section{Generalized Lutz twist and Giroux domain}\label{sec:gtor}
  We discuss generalizations of the Lutz twist along certain submanifolds 
in this section. 
 In dimension $3$, the Lutz twist is defined along a pre-Lagrangian torus 
as well as along a transverse circle (see Subsection~\ref{sec:lutztwist}). 
 Recall that the important property of 
the $\pi$-Lutz twist along a pre-Lagrangian torus is 
to create the Giroux $\pi$-torsion domain. 
 As a natural generalization, we define a generalization of the Lutz twist 
along a $\xi$-round hypersurface $S^{2n-1}\times S^1$ in dimension $2n+1$ 
(see Subsection~\ref{sec:circsph}). 
 However, this modification creates not only a Giroux domain 
but also a bordered Legendrian open book. 
 This implies that this operation does not make a gap 
between weak and strong symplectic fillabilities. 
 Then we introduce another modification of a contact structure 
that creates a Giroux domain directly (see Subsection~\ref{sec:Gdom}). 
 It is operated along a pre-Lagrangian torus even in higher dimensions. 
 These claims amount to the proofs 
of Theorem~\ref{thm:gdom-xird} and Theorem~\ref{thm:gdom-pLag}. 

  Both the $\xi$-round hypersurface and the pre-Lagrangian torus 
are generalizations of a $2$-dimensional pre-Lagrangian torus 
in a contact $3$-manifold. 
 These generalizations depend on how we regard the $2$-dimensional torus. 
 It is foliated by Legendrian circles, as well as by transverse circles. 
\subsection{Generalization of the Lutz twist 
along a $\xi$-round hypersurface} 
\label{sec:circsph}
  We define the generalized Lutz twist along a $\xi$-round hypersurface 
$S^{2n-1}\times S^1$ 
modeled on the standard contact sphere of dimension $2n-1$ 
in a $(2n+1)$-dimensional contact manifold $(M,\xi)$. 
 It is a natural generalization of the $3$-dimensional Lutz twist 
along a pre-Lagrangian torus. 
 Because a pre-Lagrangian torus can be regarded 
as the boundary of the standard tubular neighborhood 
of a transverse knot in a contact $3$-manifold. 
 The generalized Lutz twist defined in Section~\ref{sec:genlztw} 
is also operated along a transverse circle 
in a $(2n+1)$-dimensional contact manifold $(M,\xi)$. 
 Then it is natural to consider some modification along
the boundary of the standard tubular neighborhood of a transverse circle. 
 Recall that, as a boundary of the standard tubular neighborhood 
of a transverse circle, 
we have a $\xi$-round hypersurface $S^{2n-1}\times S^1$ 
modeled on the standard contact sphere $(S^{2n-1},\eta_0)$ 
(See Example~\ref{expl:xirdmostctsph}). 

  This operation can also be defined by using contact round surgery 
(See Subsection~\ref{sec:ggtorbyrsrg}, and Figure~\ref{fig:l2ntw}). 

  In order to define a generalization of the Lutz twist along hypersurface, 
we use the generalized open Lutz tube $\lft(S^1\times\R^{2n},\zeta\rgt)$ 
defined in Subsection~\ref{sec:gLtube}. 
 Recall that $\zeta$ is the confoliation defined by the $1$-form 
$\omtw=\lft(\prod_{i=1}^n\cos r_i^2\rgt)d\phi
+\sum_{i=1}^n\sin r_i^2d\theta_i$ as $\zeta=\ker\omtw$. 
 An important notion is the double 
$\lft(\du,\zeta'\rgt)
=\lft(U(\sqrt{\pi}),\zeta\rgt)\cup\lft(U(\sqrt{\pi}),\zeta\rgt)$ of 
$U(\sqrt{\pi}):=\lft\{0\le r_i\le\sqrt{\pi}\rgt\}
\subset\lft(S^1\times\R^{2n},\zeta\rgt)$ 
and the contact structure $\tilde\zeta$ on $\du$ obtained from $\zeta'$ 
by Theorem~\ref{thm:approx}. 
 In order to define the generalized Lutz twist along a transverse circle, 
in Subsubsection~\ref{sec:ghltw}, 
we define the model $\pi$-Lutz tube $\mlt$ from the double $\du$ 
by removing the standard tubular neighborhood 
of one of the core transverse circle 
$S^1\times\{0\}\subset\lft(U(\sqrt{\pi}),\zeta\rgt)
\subset\lft(S^1\times\R^{2n},\zeta\rgt)$. 
 On the other hand, now we remove the standard tubular neighborhoods of 
the both of the core transverse circles from $\du$ (see Figure~\ref{fig:wgd}). 
%
%
\begin{figure}[htb]
  \centering
  {\small \includegraphics[height=3cm]{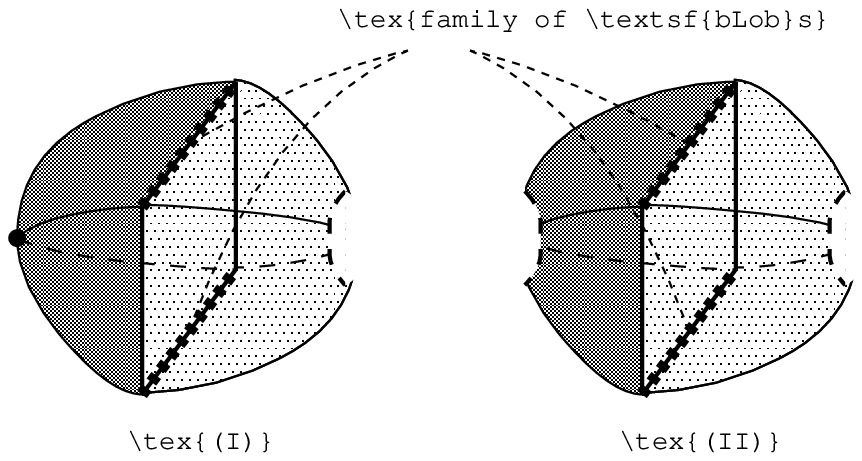}}
  \caption{(I) model Lutz tube $\mlt$, (II) wide Giroux domain $\wgd$. 
    $\pmod{\times S^1}$.}
  \label{fig:wgd}
\end{figure} 
 Then we have a contact manifold 
$\lft(S^1\times S^{2n-1}\times[0,1],\tilde\zeta\rgt)$ 
with a bordered Legendrian open book 
both of whose boundary components are $\tilde\zeta$-round hypersurfaces 
modeled on the standard contact sphere $(S^{2n-1},\eta_0)$. 
 Let $\lft(\wgd,\tilde\zeta\rgt)$ denote this contact manifold. 
 We call it the \emph{wide Giroux domain}. 
 We should remark that this $\lft(\wgd,\tilde\zeta\rgt)$ 
has an $S^1$-family of bordered Legendrian open books 
since the family in $\lft(\du,\tilde\zeta\rgt)$ is far 
from the both core transverse circles (see Figure~\ref{fig:wgd}). 
 Note that the meridional contact spheres $\{s\}\times S^{2n-1}\times\{i\}$, 
$s\in S^1$, $i=0,1$, of $\wgd\cong S^1\times S^{2n-1}\times[0,1]$ 
are homotopic in $\wgd$. 

  Now, we define a generalization of the Lutz twist along a hypersurface. 
 Let $H=S^{2n-1}\times S^1$ be a $\xi$-round hypersurface 
in a $(2n+1)$-dimensional contact manifold $(M,\xi)$ 
modeled on the standard contact sphere $(S^{2n-1},\eta_0)$. 
 Note that it has a meridional contact sphere $(S^{2n-1},\eta_0)$ 
and the longitudinal $S^1$-direction. 
 From Proposition~\ref{prop:xi-rdhyps}, 
there exists the standard tubular neighborhood along $H\subset(M,\xi)$. 
 Cut the contact manifold $(M,\xi)$ open along $H$. 
 Insert the wide Giroux domain $\wgd$ there. 
 Both components of $\rd\lft(\wgd\rgt)$ have the same tubular neighborhoods 
as $H$ from Proposition~\ref{prop:xi-rdhyps}, 
we can glue them so that the meridional contact spheres agree. 
 Then, without changing the manifold, we obtain a new contact structure 
which is the same as the given one except on the neighborhood of $H$. 
 The obtained contact structure has an $S^1$-family 
of bordered Legendrian open books since the inserted $\wgd$ has it. 
 We call this operation the \emph{generalized Lutz twist 
along a $\xi$-round hypersurface modeled on the standard contact sphere}. 

  Last of all, we mention the $k$-fold generalized Lutz twist 
along a hypersurface. 
 In a similar way, we can define the $k$-fold generalized Lutz twist 
for any positive integer $k\in\mathbb{N}$. 
 In this case, we use $\lft(U(\sqrt{(k+1)\pi/2}),\zeta\rgt)
\subset\lft(S^1\times\R^{2n},\zeta\rgt)$ 
instead of $\lft(U(\sqrt{\pi}),\zeta\rgt)$. 
 Then we obtain the \emph{wide Giroux $k\pi$-torsion domain}\/ 
$\lft(\wgd[k\pi],\tilde\zeta_k\rgt)$ 
from the double 
\begin{equation*}
  \lft(\du[\sqrt{(k+1)\pi/2}],\zeta'_k\rgt)
  =\lft(U(\sqrt{(k+1)\pi/2}),\zeta\rgt)
  \cup\lft(U(\sqrt{(k+1)\pi/2}),\zeta\rgt). 
\end{equation*}
 The \emph{$k$-fold generalized Lutz twist along a $\xi$-round hypersurface}\/ 
is defined 
as the operation inserting the wide Giroux $k\pi$-torsion domain 
$\lft(\wgd[k\pi],\tilde\zeta_k\rgt)$ 
after cutting a contact manifold $(M,\xi)$ open 
along a $\xi$-round hyper surface $H=S^{2n-1}\times S^1$ 
modeled on the standard contact sphere $(S^{2n-1},\eta_0)$. 
 Since $\lft(\wgd[k\pi],\tilde\zeta_k\rgt)$ includes an $S^1$-family 
of bordered Legendrian open books, 
the resulting contact structure is PS-overtwisted. 

%
%
\begin{rem}\label{rem:gltxird-diff}
  In dimension $3$, the operation above 
is the original $3$-dimensional Lutz twist along a pre-Lagrangian torus. 
 Although the $3$-dimensional one does not make overtwisted disks directly, 
the higher-dimensional generalized Lutz twist makes 
bordered Legendrian open books. 
 The difference is the positions of the bordered Legendrian open books 
in the generalized Lutz tube. 
 In dimension $3$, the core transverse circle which is to be removed 
intersects all overtwisted disks in the $S^1$-family of them. 
 However, in higher-dimensions, 
the family of bordered Legendrian open books in $U(\sqrt{\pi})$ 
does not intersect the core transverse circle (see Figure~\ref{fig:lth3diff}). 
%
%
\begin{figure}[htb]
  \centering
  {\small \includegraphics[height=3.9cm]{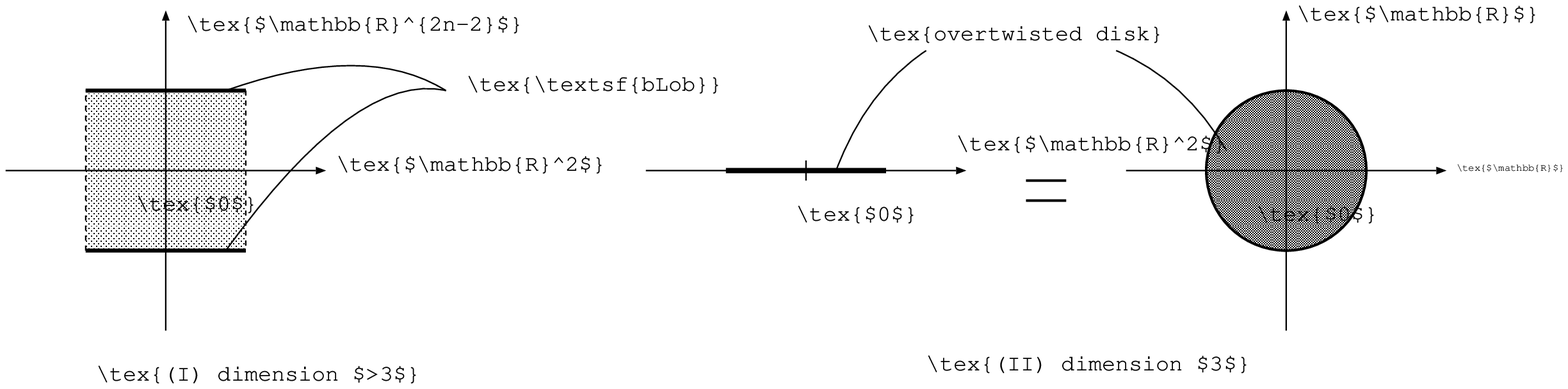}}
  \caption{\textsf{bLob} in Lutz tube $\pmod{\times S^1}$.}
  \label{fig:lth3diff}
\end{figure} 
 Concerning the higher-dimensional overtwisted disc 
in the sense of~\cite{newOT}, 
this operation does not make overtwisted discs directly 
(see Section~\ref{sec:newot}). 
\end{rem} 

\subsection{Giroux domain and generalization of Lutz twist 
along pre-Lagrangian torus} 
\label{sec:Gdom}
  Another generalization of the $3$-dimensional Lutz twist 
along a pre-Lagrangian torus is introduced in this subsection. 
 A generalization introduced in the preceding subsection 
creates a bordered Legendrian open book. 
 On the other hand, the operation introduced in this subsection 
creates a Giroux domain. 
 First, we find a natural Giroux domain appeared in the generalized Lutz tube 
(see Subsubsection~\ref{sec:gdilt}). 
 Then we define an operation that creates such Giroux domain 
(see Subsubsection~\ref{sec:alngtrs}). 

\subsubsection{Giroux domain in generalized Lutz tube}
\label{sec:gdilt}
 We find a Giroux domain in the generalized open Lutz tube 
$\lft(S^1\times\R^{2n},\zeta\rgt)$. 
 Then we define a certain domain as a subdomain 
in $\lft(S^1\times\R^{2n},\zeta\rgt)$ that contains the Giroux domain. 
 The domain can be considered as a generalization of 
the $3$-dimensional Giroux torsion domain. 
 It is needed to define a generalization of the Lutz twist 
in the following subsubsection. 

  First, we construct a simple model of the Giroux domain, 
which is to be found in the generalized open Lutz tube. 
 It is constructed as a generalization of the annulus-type example 
in Example~\ref{expl:annumodel}. 
 Set 
\begin{align*}
  &\Sigma^{2n}
  :=\overbrace{\lft(\lft[-\frac{\pi}{2},\frac{\pi}{2}\rgt]\times S^1\rgt)
  \times\dots\times\lft(\lft[-\frac{\pi}{2},\frac{\pi}{2}\rgt]\times S^1\rgt)}^n
  \cong T^n\times I^n, \\
  &\beta:=\frac{1}{\lft(\displaystyle \prod_{i=1}^n\cos s_i\rgt)}
  \sum_{j=1}^n\sin s_j\;d\theta_j
  =\frac{1}{\cos s_1\cdots\cos s_n}
  (\sin s_1\;d\theta_1+\dots+\sin s_n\;d\theta_n), 
\end{align*}
where $(s_1,\theta_1,s_2,\theta_2,\dots,s_n,\theta_n)\in\Sigma^{2n}$ 
are coordinates. 
 Then 
\begin{align*}
  \omega:=d\beta
  =\frac{1}{\displaystyle \prod_{i=1}^n\cos s_i}
  \lft\{\sum_{j=1}^n\frac{1}{\cos s_j}ds_j\wedge d\theta_j
  +\sum_{\substack{k,l=1\\k\ne l}}^n\frac{\sin s_k\sin s_l}{\cos s_l}\;ds_l\wedge d\theta_k\rgt\}
\end{align*}
is a symplectic structure on the interior $\inter\Sigma^{2n}$. 
 In fact, we have 
\begin{align*}
  \omega^n=\frac{n!}{\lft(\displaystyle \prod_{i=1}^n\cos s_i\rgt)^{n+1}}
  \begin{vmatrix}
    1&\sin s_1\sin s_2&\sin s_1\sin s_3&\dots&\sin s_1\sin s_n\\
    \sin s_2\sin s_1&1&\sin s_2\sin s_3&\dots&\sin s_2\sin s_n\\
    \sin s_3\sin s_1&\sin s_3\sin s_2&1&\dots&\sin s_2\sin s_n\\
    \vdots&\vdots& &\ddots&\vdots \\
    \sin s_n\sin s_1&\sin s_n\sin s_2&\sin s_n\sin s_3&\dots&1
  \end{vmatrix} \\ \hfill 
  ds_1\wedge ds_2\wedge d\theta_2\wedge\dots\wedge ds_n\wedge d\theta_n>0. 
\end{align*}
 We take a contact structure $\xi$ on $\rd\Sigma^{2n}$ given as follows. 
 The boundary $\rd\Sigma^{2n}$ is divided as 
$\rd\Sigma^{2n}=\cup U^i_\pm$, where
$U^i_\pm:=\lft\{(s_1,\theta_1,\dots,s_n,\theta_n)\in\Sigma^{2n}\mid 
s_i=\pm(\pi/2)\rgt\}$, $i=1,2,\dots,n$. 
 On each $U^i_\pm$, a contact structure $\xi$ is defined as 
\begin{equation*}
  \xi|_{U^i_\pm}
  :=\ker\lft(\pm d\theta_i+\sum_{\substack{j=1\\j\ne i}}^n\sin s_j\;d\theta_j\rgt). 
\end{equation*}
 Precisely, it is not contact at the end $\{s_j=\pm(\pi/2)\}\subset U^i_\pm$, 
or the corner of $\rd\Sigma^{2n}$. 
 However, if we smooth $\rd\Sigma^{2n}$ to the inside 
so that it is transverse to $\sum\tan s_i\;(\uvv{s_i})$, 
we may extend the contact structure on $\cup\inter U^i_\pm$ 
to the smoothed $\rd\Sigma^{2n}$ as the kernel 
of the restriction of $(\sum\sin s_i\;d\theta_i)$.
 We use the same notation for the smoothed things. 
 Then the triple $(\Sigma^{2n},\omega,\xi)$ is an ideal Liouville domain. 
 For a function $f\colon\Sigma^{2n}\to[0,\infty)$, 
$(s_1,\theta_1,\dots,s_n,\theta_n)\mapsto \prod_{i=1}^n\cos s_i$, 
which has $\rd\Sigma^{2n}$ as the regular level set $f^{-1}(0)$, 
the $1$-form $f\beta=\sum_{i=1}^n\sin s_i\;d\theta_i$ on $\Sigma^{2n}$ 
induces the contact structure $\xi$ on $\rd\Sigma^{2n}$. 
 Then the Giroux domain associated to this ideal Liouville domain 
$(\Sigma^{2n},\omega,\xi)$ is constructed as a contactization as follows: 
$\lft(\Sigma^{2n}\times S^1, \ker(fd\phi+f\beta)\rgt)$, 
where $\phi$ is the coordinate of the producted $S^1$. 
 The underlying manifold is 
$\Sigma^{2n}\times S^1\cong(T^n\times I^n)\times S^1\cong T^{n+1}\times I^n$. 
 And the induced contact structure is 
\begin{align*}
  \ker(fd\phi+f\beta)
  &=\ker\lft\{\lft(\prod_{i=1}^n\cos s_i\rgt)d\phi
  +\sum_{i=1}^n\sin s_i\;d\theta_i\rgt\} \\
  &=\ker\lft\{(\cos s_1\cdots\cos s_n)\;d\phi
  +\sin s_1\;d\theta_1+\dots+\sin s_n\;d\theta_n\rgt\}.
\end{align*}
 As you see, this is a contact structure 
that has already appeared in this paper.

%
%
\begin{rem}
  In dimension $3$, the Giroux domain constructed here is the same 
as that in Example~\ref{expl:annumodel}, that is, 
the $3$-dimensional Giroux $\pi$-torsion domain: 
$(T^2\times I,\ker(\cos s\;d\phi+\sin s\;d\theta))$. 
 In this sense, the Giroux domain constructed above is considered 
one of the simplest generalization 
of the $3$-dimensional Giroux $\pi$-torsion domain. 
\end{rem} 

  The Giroux domain $\lft(\Sigma^{2n}\times S^1,\ker(fd\phi+fd\beta)\rgt)$ 
that is constructed above 
exists in the generalized Lutz tube $\lft(S^1\times\R^{2n},\zeta\rgt)$. 
 Recall that $\zeta$ is the confoliation 
$\zeta=\ker\lft\{\lft(\prod_{i=1}^n\cos r_i^2\rgt)d\phi\rgt.
+\lft.\sum_{i=1}^n\sin r_i^2\;d\theta_i\rgt\}$ with the non-contact locus 
$\Sigma(\zeta)=\cup_{i\ne j}\lft\{\cos r_i^2=0,\ \cos r_j^2=0\rgt\}$ 
(see Subsection~\ref{sec:gLtube}). 
 Then the Giroux domain $\lft(\Sigma^{2n}\times S^1,\ker(f\phi+f\beta)\rgt)$ 
is contactomorphic to 
\begin{equation*}
  \lft\{(\phi,r_1,\theta_1,\dots,s_n,\theta_n)\in S^1\times\R^{2n}\ \lft|\ 
  \frac{\pi}{2}\le r_i^2\le\frac{3}{2}\pi,\ i=1,2,\dots,n\rgt.\rgt\} 
  \subset\lft(S^1\times\R^{2n},\zeta\rgt)
\end{equation*}
(see Figure~\ref{fig:GDinLT}). 
%
%
\begin{figure}[htb]
  \centering
  {\small \includegraphics[height=4.32cm]{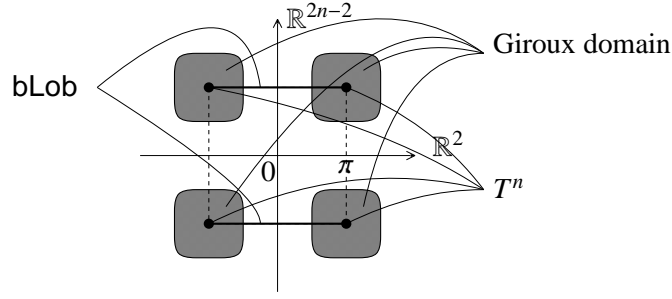}}
  \caption{Giroux domain in the generalized Lutz tube}
  \label{fig:GDinLT}
\end{figure} 
 Note that the Giroux domain does not contain the bordered Legendrian open books
$P_\phi=\lft\{0\le r_1^2\le\pi,\ r_2^2=\pi,\dots,\ r_n^2=\pi,\ \phi=\phi\rgt\}$ 
(compare Figures~\ref{fig:GDinLT} and~\ref{fig:lth3diff}). 
 We should remark that it is a tubular neighborhood 
of the $(n+1)$-dimensional torus 
$T^{n+1}:=\lft\{r_i^2=\pi,\ i=1,2,\dots,n\rgt\}
\subset\lft(S^1\times\R^{2n},\zeta\rgt)$. 
 The torus $T^{n+1}$ is pre-Lagrangian. 
 In fact, the level torus 
$T^n_\phi:=\lft\{\phi=\phi,\ r_i^2=\pi,\ i=1,2,\dots,n\rgt\}\subset T^{n+1}
\subset\lft(S^1\times\R^{2n},\zeta\rgt)$ is Legendrian 
for the contact structure part of 
$\zeta=\ker\lft\{\lft(\prod_{i=1}^n\cos r_i^2\rgt)d\phi
+\sum_{i=1}^n\sin r_i^2\;d\theta_i\rgt\}$. 
 Therefore, $T^{n+1}\subset\lft(S^1\times\R^{2n},\zeta\rgt)$ 
is a pre-Lagrangian torus with a product foliation with Legendrian leaves. 

\subsubsection{Generalization of the Lutz twist 
  along a pre-Lagrangian torus $T^{n+1}$}\label{sec:alngtrs}
  Now, we define a generalization of the Lutz twist 
along a pre-Lagrangian torus in a contact manifold. 
 Rough story is as follows. 
 First, we blow the contact manifold up along the pre-Lagrangian torus. 
 Then glue a tubular neighborhood of the pre-Lagrangian torus 
that contains the Giroux domain $\lft(\Sigma^{2n}\times S^1,\zeta\rgt)$ 
constructed above. 

  First, we discuss a blowing up procedure. 
 Let $T^{n+1}=T^n\times S^1$ be a pre-Lagrangian torus 
with Legendrian leaves $T^n\times\{\phi\}\subset T^{n+1}=T^n\times S^1$
in a contact $(2n+1)$-dimensional manifold $(M,\xi)$. 
 On account of Proposition~\ref{prop:nbdpL}, 
there exists the standard tubular neighborhood $U\subset(M,\xi)$ 
of $T^{n+1}\subset(M,\xi)$ which is contactomorphic to 
$\lft(S^1\times T^n\times D^n,\eta_0\rgt)$, 
where
\begin{equation*}
  \eta_0=\ker\lft\{d\phi+s_1d\theta_1+\dots+s_nd\theta_n\rgt\},
\end{equation*}
and $(\phi,\theta_1,\dots,\theta_n,s_1,\dots,s_n)\in S^1\times T^n\times D^n$ 
are coordinates. 
 By the spherical coordinate $(\rho,\psi_1,\dots,\psi_{n-1})$ 
of $D^n\subset\R^n$, that is, 
\begin{align*}
  &s_1=\rho\cos\psi_1,\ s_2=\rho\sin\psi_1\cos\psi_2,\dots \\
  &\quad\dots,\ s_{n-1}=\rho\sin\psi_1\cdots\sin\psi_{n-2}\cos\psi_{n-1},\ 
  s_n=\rho\sin\psi_1\cdots\sin\psi_{n-2}\sin\psi_{n-1}, 
\end{align*}
the contact structure $\eta_0$ is written as 
\begin{equation*}
  \eta_0=\ker\lft\{d\phi+(\rho\cos\psi_1)d\theta_1+\dots
  +(\rho\sin\psi_1\cdots\sin\psi_{n-1})d\theta_n\rgt\}. 
\end{equation*}
 On the other hand, we have a manifold $T^{n+1}\times\rd D^n\times[0,1]$ 
with a contact structure 
\begin{equation*}
  \eta_0'=\ker\lft\{d\phi'+(\rho'\cos\psi'_1)d\theta'_1\rgt.+\dots
  +\lft.(\rho'\sin\psi'_1\cdots\sin\psi'_{n-1})d\theta'_n\rgt\},
\end{equation*}
where $(\phi',\theta'_1,\dots,\theta'_n,\psi'_1,\dots,\psi'_{n-1},\rho')
\in T^{n-1}\times\rd D^n\times[0,1]$ are coordinates. 
 There 
 exists a contactomorphism 
\begin{align*}
  \varphi\colon&(T^{n+1}\times({D^n\setminus\{0\}}),\eta_0)
  \to(T^{n+1}\times(\rd D^n\times (0,1]),\eta_0'), \\
  &(\phi,\theta_1,\dots,\theta_n,\rho,\psi_1,\dots,\psi_{n-1})
  \mapsto(\phi,\theta_1,\dots,\theta_n,\psi_1,\dots,\psi_{n-1},\rho). 
\end{align*}
 Therefore, we can replace the tubular neighborhood $U$ 
of the pre-Lagrangian torus $T^{n+1}\subset(M,\xi)$ with 
$\lft(T^{n+1}\times\rd D^n\times(0,1],\eta_0'\rgt)$. 
 Let 
\begin{equation*}
  M_{\textup{bu}}(T^{n+1}):=\lft(M\setminus\inter(T^{n+1}\times D^n)\rgt)
  \cup\lft(T^{n+1}\times\rd D^n\times[0,1]\rgt)
\end{equation*}
denote the obtained manifold, 
and $\rd'M_{\textup{bu}}(T^{n+1})$ be the end 
that corresponds to $T^{n+1}\times S^1\times\{0\}$. 
 Thus, we obtain a contact structure $\eta$ on $M_{\textup{bu}}(T^{n+1})$ 
that satisfies: 
\begin{itemize}
\item $\xi\vert_{M\setminus T^{n+1}}$ is isomorphic to 
  $\eta\vert_{M_{\textup{bu}}(T^{n+1})\setminus\rd'M_{\textup{bu}}(T^{n+1})}$, 
\item $\eta$ near the end $\rd'M_{\textup{bu}}(T^{n+1})$ 
is isomorphic to $\eta_0'$ near the end $(T^{n+1}\times\rd D^n\times\{0\})$. 
\end{itemize}

  Next, we construct a tubular neighborhood 
of the pre-Lagrangian torus $T^{n+1}$ to be glued in. 
 Let $\lft(\Sigma^{2n}\times S^1,\zeta\rgt)$ be the Giroux domain 
constructed in Subsubsection~\ref{sec:gdilt}. 
 Recall that $\Sigma^{2n}\times S^1$ is diffeomorphic to $T^{n+1}\times D^n$ 
after some smoothing. 
 In addition, the core torus 
$T^{n+1}\subset\lft(\Sigma^{2n}\times S^1,\zeta\rgt)$ is pre-Lagrangian. 
 Then $\lft(\Sigma^{2n}\times S^1,\zeta\rgt)$ 
is regarded as a tubular neighborhood of the pre-Lagrangian torus $T^{n+1}$. 
 However, it can not be glued directly to $M_{\textup{bu}}(T^{n+1})$ 
along $\rd'M_{\textup{bu}}(T^{n+1})$. 
 We use a similar trick as the preceding generalized Lutz twists. 
 In other words, we take the double of the Giroux domain 
$\lft(\Sigma^{2n}\times S^1,\zeta\rgt)$. 
 Actually, the boundary of $\lft(\Sigma^{2n}\times S^1,\zeta\rgt)$ 
is a $\zeta$-round hypersurface. 
 Since the both boundaries of two $\lft(\Sigma^{2n}\times S^1,\zeta\rgt)$ 
are $\zeta$-round hypersurfaces modeled on the same contact manifold, 
we can glue two $\lft(\Sigma^{2n}\times S^1,\zeta\rgt)$ along the boundaries 
by Proposition~\ref{prop:xi-rdhyps}. 
 Let $(\Sigma_2,\zeta_2)$ denote the obtained double. 
 The manifold $\Sigma_2=(\Sigma^{2n}\times S^1)\cup_{\rd\Sigma^{2n}\times S^1}
(\Sigma^{2n}\times S^1)$ is diffeomorphic to $T^{n+1}\times S^n$. 
 We apply the blowing up procedure along the pre-Lagrangian torus 
$T^{n+1}\subset(\Sigma_2,\zeta_2)$ 
that corresponds to one of the core pre-Lagrangian torus 
of the Giroux domain $\lft(\Sigma^{2n}\times S^1,\zeta\rgt)$. 
 Thus, we obtain the contact manifold $(\Sigma_2',\zeta_2)$ 
whose underlying manifold $\Sigma_2'$ is diffeomorphic to $T^{n+1}\times D^n$. 
 We should remark here that, in $(\Sigma_2',\zeta_2)$, 
there remains one Giroux domain $\lft(\Sigma^{2n}\times S^1,\zeta\rgt)$. 
 In addition, $(\Sigma_2',\zeta_2)$ can be regarded 
as a tubular neighborhood of the core pre-Lagrangian torus $T^{n+1}$ 
in the Giroux domain 
$\lft(\Sigma^{2n}\times S^1,\zeta\rgt)\subset(\Sigma_2',\zeta_2)$. 

  Now, we glue the tubular neighborhood $(\Sigma_2',\zeta_2)$ of $T^{n+1}$ 
to $\lft(M_{\textup{bu}}(T^{n+1}),\eta\rgt)$ obtained by blowing up along $T^{n+1}$.
 The both ends are obtained by blowing up 
along the pre-Lagrangian torus $T^{n+1}=S^1\times T^n$ 
with Legendrian leaves $\{\phi\}\times T^n$, 
their neighborhoods are contactomorphic to the neighborhood of 
$T^{n+1}\times\rd D^n\times\{0\}\subset(T^{n+1}\times\rd D^n\times[0,1],\zeta)$. 
 There, the both contact hyperplanes are $\ker d\phi$. 
 Therefore, these contact manifolds are glued along the ends, 
so that their ``meridians'' agree. 
 As a result, we obtain the same manifold as the given $M$ 
with a new contact structure $\tilde\xi$. 
 This contact structure $\tilde\xi$ is the same as the original $\xi$ 
outside the modified tubular neighborhood of the pre-Lagrangian torus $T^{n+1}$,
since $\eta|_{M_{\textup{bu}}(T^{n+1})\setminus\rd'M_{\textup{bu}}(T^{n+1})}$ 
is isomorphic to $\xi|_{M\setminus T^{n+1}}$. 
 Moreover, the contact manifold $\lft(M,\tilde\xi\rgt)$ has one Giroux domain 
$\lft(\Sigma^{2n}\times S^1,\zeta\rgt)$ since it is included in the attached 
$(\Sigma_2',\zeta_2)$. 
 Then the claims in Theorem~\ref{thm:gdom-pLag} have been confirmed. 

  The procedures above amount to the definition of 
the \emph{generalized Lutz twist along a pre-Lagrangian torus}. 
 From the construction, there exists two Giroux domains glued together 
in the obtained contact manifold. 
 Then, from Theorem~\ref{thm:mnw_gdom}, the obtained contact manifold 
has no semi-positive strong symplectic filling. 
 The remark after Theorem~\ref{thm:gdom-pLag} is confirmed. 

%
%
\begin{rem}
  In dimension $3$, 
the generalized Lutz twist along a pre-Lagrangian torus $T^2$ defined above 
is the original $3$-dimensional $2\pi$-Lutz twist along a pre-Lagrangian torus 
with slope $1/n$, $n\in\Z$. 
 Such a pre-Lagrangian torus is foliated by Legendrian $S^1$ leaves. 
 The Giroux domain $\lft(\Sigma^2\times S^1,\zeta\rgt)$ 
is the Giroux $\pi$-torsion domain (see Example~\ref{expl:annumodel}). 
 Then the tubular neighborhood $(\Sigma'_2,\zeta_2)$ obtained from the double 
is the Giroux $2\pi$-torsion domain. 
 Therefore the operation above amounts to the $2\pi$-Lutz twist. 
\end{rem}

\section{Symplectic round handle and Contact round surgery}
\label{sec:ctrdsurg}
  Contact round surgery by using symplectic round handle 
is introduced in this section. 
 They were introduced first in \cite{art19}. 
 We review the definitions of the symplectic round handle 
in the following subsections. 
 Then we review the contact round surgery in Subsection~\ref{sec:hdimsurg}. 
 In this paper, we need contact round surgeries of index~$1$ and~$2n$ 
of $(2n+1)$-dimensional contact manifold. 
 Therefore, we observe the symplectic round handle of index~$1$ 
of any even dimension in Subsection~\ref{sec:ind12nsurg}. 
 Although the contact round surgery of index~$2n$ 
of a $(2n+1)$-dimensional contact manifold was not defined in \cite{art19}, 
it can be defined by using the convex hypersurface theory 
(see Subsubsection~\ref{sec:ind2nsurg}). 
 See also \cite{art20} for the contact round surgery of index~$2$ 
of a contact $3$-manifold. 

  We should remark that round handles with other kind of symplectic structures 
appear in \cite{gay}. 

\subsection{Symplectic round handle}\label{sec:symprdhdl}
  First, we define symplectic round handle. 

 An $n$-dimensional \emph{round handle}\/ of index $k$ 
attached to the boundary of an $n$-dimensional manifold $M$
is a pair
\begin{equation*}
  R_k=\lft(D^k\times D^{n-k-1}\times S^1, f\rgt)
\end{equation*}
consists of a product of an $(n-1)$-dimensional disk $D^k\times D^{n-k-1}$ 
with corner and a circle, 
and an attaching embedding 
$f\colon\rd_-\lft(D^k\times D^{n-k-1}\times S^1\rgt)\to\rd M$, 
where 
\begin{equation*}
  \rd_-\lft(D^k\times D^{n-k-1}\times S^1\rgt):=\rd D^k\times D^{n-k-1}\times S^1 
  \subset\rd\lft(D^k\times D^{n-k-1}\times S^1\rgt)
\end{equation*}
is the attaching region. 
 It was introduced by Asimov~\cite{asimov} 
to study non-singular Morse-Smale flows. 

  A \emph{symplectic round handle}\/ is defined as 
the model symplectic round handle 
and its attachment to the boundary of a symplectic manifold. 
 First, we define the model symplectic round handles, 
or symplectic structures on round handles. 
 The $2n$-dimensional model symplectic round handle of index $k$, 
($k=0,1,2,\dots,n-1$), is defined as a subset of $\R^{2n-1}\times S^1$ 
with the symplectic structure 
\begin{equation*}
  \omega_0:=\sum_{i=1}^{n-1}\lft(dp_i\wedge dq_i\rgt)+dz\wedge d\phi, 
\end{equation*}
where $(p_1,\dots,p_{n-1},q_1,\dots,q_{n-1},z,\phi)$ are coordinates 
of $\R^{2n-1}\times S^1=\R^{2(n-1)}\times\R\times S^1$. 
 Set a vector field $X_k$ on $\ostr$ as 
\begin{equation}\label{eq:liouville}
  X_k:=\sum_{i=1}^k\lft(-q_i\uv{q_i}+2p_i\uv{p_i}\rgt)
  +\frac12\sum_{i=k+1}^{n-1}\lft(q_i\uv{q_i}+p_i\uv{p_i}\rgt)+z\uv z, 
\end{equation}
for each $k=1,2,\dots, n-1$ 
(see the dotted curves and arrows in Figure~\ref{fig:symprdhdl}). 
 It is the so called Liouville vector field 
on $\lft(\R^{2n-1}\times S^1, \omega_0\rgt)$. 
 The \emph{Liouville vector field}\/ is defined 
as a vector filed $X$ on a symplectic manifold $(W,\omega)$ 
which satisfies $L_X\omega=\omega$. 
 It is well known that on a hypersurface $M\subset(W,\omega)$ 
transverse to the Liouville vector field $X$, 
a contact form is induced as $(X\intp\omega)\vert_{TM}$ (see \cite{weinstein}). 
 Then we take a round handle as a subset 
of $\lft(\R^{2n-1}\times S^1,\omega_0\rgt)$ 
so that its boundary is transverse to $X_k$. 
 Let $f_k$, $g_k$ be functions defined as follows: 
\begin{align}\label{eq:bdrhdl}
  f_k(p_i,q_i,z,\phi)&:=\sum_{i=1}^k\lft(-\frac12q_i^2+p_i^2\rgt)
  +\frac14\sum_{i=k+1}^{n-1}\lft(q_i^2+p_i^2\rgt)+\frac12z^2, \notag \\
  g_k(p_i,q_i,z,\phi)&:=-A\sum_{i=1}^kq_i^2
  +B\lft(\sum_{i=1}^kp_i^2+\sum_{i=k+1}^{n-1}\lft(q_i^2+p_i^2\rgt)+z^2\rgt). 
\end{align}
 By taking the positive constants $A$, $B$ suitably, 
we can cut off the subset $R^{(0)}_k\subset(\R^{2n-1}\times S^1,\omega_0)$ 
which is diffeomorphic to $D^k\times D^{2n-k-1}\times S^1$ as 
\begin{equation*}
  R^{(0)}_k
  :=\lft\{x=(p_i,q_i,z,\phi)\in\ostr\mid f_k(x)\ge-1,\ g_k(x)\le c\rgt\}, 
\end{equation*}
for some constant $c>0$ (see Figure~\ref{fig:symprdhdl}). 
%
%
\begin{figure}[htb]
  \centering
  {\footnotesize \includegraphics[height=5cm]{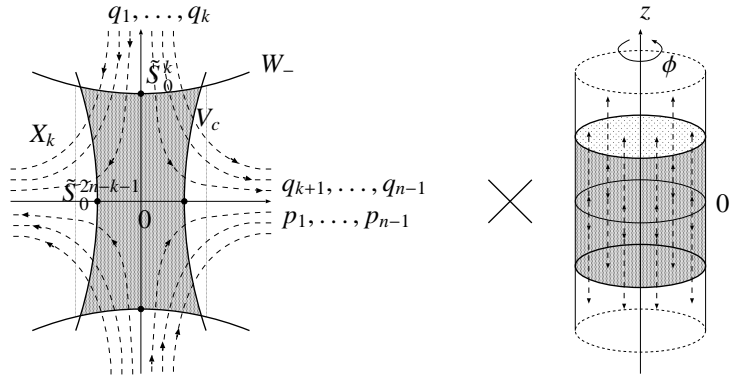}}
  \caption{Symplectic round handle}
  \label{fig:symprdhdl}
\end{figure} 
 We call $\lft(R^{(0)}_k,\omega_0\rgt)$ 
the $2n$-dimensional \emph{model symplectic round handle}\/ of index $k$, 
($k=1,2,\dots,n-1$). 

  The model symplectic round handle is attached 
to the boundary of a symplectic manifold as follows. 
 Let $(W,\omega)$ be a $2n$-dimensional symplectic manifold with boundary, 
and $M\subset\rd W$ a convex contact type subset of the boundary. 
 In other words, there exists a Liouville vector field $X$ 
defined near $M\subset (W,\omega)$ which is transverse to $M$ and 
looks outward at $M$. 
 Note that a contact form is induced on $M$ from $X$ and $\omega$. 
 Let $\tilde S^k=S^{k-1}\times S^1\subset M$, $k=1,2,\dots,n-1$, 
be an isotropic product of a sphere and a circle 
with a trivialization of the conformal symplectic normal bundle 
$\csn(\tilde S^k, M)$ 
with respect to the contact structure induced on $M$. 
 Then the following property is proved in \cite{art19}: 
%
%
\begin{prop}\label{prop:rhatt}
 The $2n$-dimensional model symplectic round handle 
$\lft(R^{(0)}_k,\omega_0\rgt)$ of index $k$ 
can be attached to $(W,\omega)$ along $\tilde S^k\subset M$. 
 In other words,  
the Liouville vector field $X$ and the symplectic structure $\omega$ 
extend to the manifold 
obtained from $W$ by attaching a round handle of index $k$ 
along $\tilde S^k$ 
so that the modified boundary is still convex. 
\end{prop}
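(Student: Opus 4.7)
The plan is to adapt the classical Weinstein handle attachment argument to the round setting. The core of the model handle is the isotropic submanifold
\begin{equation*}
  \tilde S^k_0 := \lft\{(p,q,z,\phi) \in R^{(0)}_k \;\lft|\; p = 0,\ z = 0,\ q_{k+1}=\dots=q_{n-1}=0,\ q_1^2+\dots+q_k^2 = 2\rgt.\rgt\},
\end{equation*}
which is diffeomorphic to $S^{k-1}\times S^1$ and lies in the attaching region $\rd_- R^{(0)}_k = \{f_k=-1\}\cap R^{(0)}_k$. A direct computation using \eqref{eq:liouville} gives
\begin{equation*}
  X_k \intp \omega_0 = \sum_{i=1}^{k}\lft(q_i\,dp_i + 2p_i\,dq_i\rgt) + \frac{1}{2}\sum_{i=k+1}^{n-1}\lft(-q_i\,dp_i + p_i\,dq_i\rgt) + z\,d\phi,
\end{equation*}
and the pullback of this $1$-form to $\tilde S^k_0$ vanishes, so $\tilde S^k_0$ is isotropic for the contact form $\alpha_k := (X_k\intp\omega_0)\vert_{T\rd_-R^{(0)}_k}$. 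The Darboux coordinates $(p_{k+1},q_{k+1},\dots,p_{n-1},q_{n-1})$ give a canonical trivialization of $\csn(\tilde S^k_0, \rd_- R^{(0)}_k)$.

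The next step is to match neighborhoods. Since $\tilde S^k\subset M$ is isotropic with a trivialization of $\csn(\tilde S^k, M)$, Proposition~\ref{prop:isotnbd} supplies a strict contactomorphism $\varphi$ from a neighborhood of $\tilde S^k_0$ in $(\rd_- R^{(0)}_k, \alpha_k)$ onto a neighborhood of $\tilde S^k$ in $(M, \alpha := (X\intp\omega)\vert_{TM})$ matching the framings. Extend $\varphi$ to a symplectomorphism $\Phi$ of neighborhoods inside $(R^{(0)}_k,\omega_0)$ and $(W,\omega)$ by sending the image of $x \in \rd_- R^{(0)}_k$ under the time-$t$ flow of $-X_k$ to the image of $\varphi(x)$ under the time-$t$ flow of $-X$. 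Because both vector fields are Liouville and $\varphi$ is a strict contactomorphism, $\Phi$ is symplectic on a full two-sided collar and carries $X_k$ to $X$ on the overlap.

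Glue $(R^{(0)}_k,\omega_0)$ to $(W,\omega)$ via $\Phi$. On the overlap, $\omega_0$ and $\omega$ agree and $X_k$ agrees with $X$, so $\omega$ extends across the attached handle and $X$ extends as $X_k$ inside. To see convexity of the new outgoing boundary, note that by \eqref{eq:bdrhdl},
\begin{equation*}
  X_k(g_k) = 2A\sum_{i=1}^{k}q_i^2 + 4B\sum_{i=1}^{k}p_i^2 + B\sum_{i=k+1}^{n-1}\lft(q_i^2+p_i^2\rgt) + 2Bz^2,
\end{equation*}
which is strictly positive on $\{g_k = c\}$ for any $c>0$, so $X_k$ is transverse to $\rd_+ R^{(0)}_k = \{g_k=c\}\cap R^{(0)}_k$ and points outward. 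Away from the handle the boundary remains transverse to $X$ by hypothesis.

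The main technical obstacle is the corner where $\{f_k=-1\}$ meets $\{g_k=c\}$: the naive glued manifold is not smooth. As in the standard Weinstein argument, the constants $A$, $B$, $c$ must be chosen so that the Liouville flow of $X_k$ sweeps out a smooth collar through the corner region, and the corner is then rounded along an integral surface of $X_k$. Since $X_k$ is simultaneously transverse to both faces with the correct signs, this smoothing can be performed while preserving transversality, yielding a convex boundary everywhere and completing the attachment.
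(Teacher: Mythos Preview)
The paper does not actually prove this proposition; it states the result and cites \cite{art19} for the proof, so there is no in-text argument to compare against.

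Your proof is the natural adaptation of Weinstein's handle attachment to the round setting and is essentially correct. The computation of $X_k\intp\omega_0$, the identification of the isotropic attaching core $\tilde S^k_0\cong S^{k-1}\times S^1$ in $\{f_k=-1\}$, the trivialization of $\csn(\tilde S^k_0,\rd_-R^{(0)}_k)$ by the coordinates $(p_{k+1},q_{k+1},\dots,p_{n-1},q_{n-1})$, and the appeal to Proposition~\ref{prop:isotnbd} to produce a strict contactomorphism of neighborhoods are all sound. Extending this to a symplectomorphism by flowing along the Liouville fields works precisely because the contactomorphism is strict and $d(X_k\intp\omega_0)=\omega_0$, $d(X\intp\omega)=\omega$; you are correct that the model extends past $\{f_k=-1\}$ in the ambient $(\R^{2n-1}\times S^1,\omega_0)$, which is what makes the two-sided collar available. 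The convexity computation $X_k(g_k)>0$ on $\{g_k=c\}$ is right.

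The only place you are a bit terse is the corner smoothing. To make it self-contained you could note that $X_k(f_k)=\sum_{i\le k}(q_i^2+4p_i^2)+\tfrac{1}{4}\sum_{i>k}(q_i^2+p_i^2)+z^2$ is also strictly positive on $\{f_k=-1\}$, so near the corner $X_k$ is transverse to \emph{both} faces with outward sign on $\{g_k=c\}$ and inward sign on $\{f_k=-1\}$; any monotone interpolation of the two level functions then produces a smooth hypersurface transverse to $X_k$. With that remark added, the argument is complete and is presumably very close to what appears in \cite{art19}.
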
 

\subsection{Contact round surgeries}\label{sec:hdimsurg}
  We review the definition of contact round surgery 
by using the symplectic round handles. 
 Recall that a $(2n+2)$-dimensional round handle of index~$k$ is attached 
to the boundary of a $(2n+2)$-dimensional manifold 
along a product $S^{k-1}\times S^1$ of a $(k-1)$-sphere and a circle. 
 Let $(M,\xi=\ker\alpha)$ be a $(2n+1)$-dimensional contact manifold, 
and $L\cong S^{k-1}\times S^1$ an isotropic submanifold in $(M,\xi)$ 
with a trivialization of the conformal symplectic normal bundle $\csn(L,M)$. 
 We will define a contact round surgery of $(M,\xi)$ along $L$. 
 Topologically, round surgery is defined by attaching a round handle 
to the trivial cobordism $M\times[0,1]$. 
 We first construct a symplectic structure on $M\times[0,1]$. 
 Then we attach a symplectic round handle to it. 

  The trivial symplectic cobordism is taken as follows. 
 Take 
$\lft(M\times[0,1], d(e^t\alpha)|_{M\times[0,1]}\rgt)
\subset\lft(M\times\R,d(e^t\alpha)\rgt)$ 
as the trivial symplectic cobordism, in the symplectization of $(M,\xi)$. 
 Set $\omega:=d(e^t\alpha)|_{M\times[0.1]}$. 
 Both the boundary components $M\times\{0\},\ M\times\{1\}\subset M\times[0,1]$ 
are of concave and convex contact type respectively. 
 In fact, $X=\uvv t$ is the Liouville vector field. 
 The induced contact forms are $\alpha$ on $M\times\{0\}$ 
and $e\alpha$ on $M\times\{1\}$. 
 We identify both $(M\times\{0\},\ker\alpha)$ 
and $(M\times\{1\},\ker(e\alpha))$ with $(M,\xi=\ker\alpha)$. 

  We are attaching the standard symplectic round handle 
to the convex end $M\times\{1\}$ of the trivial cobordism 
$(M\times[0,1],\omega)$. 
 Attaching the $(2n+2)$-dimensional standard symplectic round handle 
of index~$k$ 
to the trivial cobordism $(M\times[0,1],\omega)$ 
along $L\subset M\times\{1\}$, 
by Proposition~\ref{prop:rhatt}, 
we obtain a new cobordism $W\cong(M\times[0,1])\cup R_k$. 
 The modified end, which $\tilde M$ denotes, is the manifold
obtained from $M$ by a round surgery of index~$k$. 
 From Proposition~\ref{prop:rhatt}, 
the Liouville vector field $X=\uvv t$ on $(M\times [0,1],\omega)$ extends 
to the Liouville vector field $\tilde X$ on $W$ 
which is also transverse to the new boundary $\tilde M\subset\rd W$ 
and looks outward there. 
 Therefore, the surgered manifold $\tilde M$ inherits 
the contact structure $\tilde\xi$ 
which is the same as the original $\xi=\ker\alpha$ 
except where the attachment takes place. 
 It is proved in \cite{art19} 
that the obtained contact manifold $(\tilde M,\tilde\xi)$ does not depend 
on the choice of the model symplectic round handle. 

  The operation above, 
constructing a new contact manifold $(\tilde M,\tilde\xi)$ 
from the given contact manifold $(M,\xi)$, 
is called the \emph{contact round surgery} of index~$k$ 
along the isotropic submanifold $L\cong S^{k-1}\times S^1$. 
 Note that, for $k=0,1,\dots,n-1$, the contact round surgeries of index~$k$ 
of $(2n+1)$-dimensional contact manifold is defined by this method. 

\subsection{Contact round surgeries of index $1$ and $2n$} 
\label{sec:ind12nsurg}
  In order to describe the generalized Lutz twists, 
we need the contact round surgery of index~$1$ and~$2n$ 
of a $(2n+1)$-dimensional contact manifold. 
 We observe these two kinds of surgery more carefully. 
 The contact round surgery of index~$2n$ 
of a $(2n+1)$-dimensional contact manifold has not been defined yet. 
 Following the observation of the contact round surgery of index~$1$ 
in Subsubsection~\ref{sec:ind1surg}, 
it is defined in Subsubsection~\ref{sec:ind2nsurg}. 
 Both such contact round surgeries of a $(2n+1)$-dimensional contact manifold 
are defined by the $(2n+2)$-dimensional symplectic round handle of index~$1$. 
 Then we observe this first in the following subsubsection. 

\subsubsection{Symplectic round handle of index $1$}
\label{sec:symprhi1}
  We observe a $(2n+2)$-dimensional symplectic round handle of index $1$. 
 Contact round surgeries of index $1$ and $2n$ are defined 
by removing and gluing the attaching and belt regions 
of the symplectic round handle of index $1$ 
(see Subsection~\ref{sec:hdimsurg}). 
 Therefore, we observe contact structures 
induced on the attaching and belt region of the symplectic round handle 
of index $1$. 

  From Subsection~\ref{sec:symprdhdl}, 
a $(2n+2)$-dimensional symplectic round handle of index $1$ 
is defined as follows. 
 It is a certain subset of $\R^{2n+1}\times S^1$ with the symplectic structure 
\begin{equation*}
  \omega_0=\sum_{i=1}^ndp_i\wedge dq_i+dz\wedge d\phi
\end{equation*}
where $(p_1,\dots,p_n,q_1,\dots,q_n,z,\phi)$ are coordinates 
of $\R^{2n+1}\times S^1$. 
 The model round handle of index $1$ is the region 
$R^{2n+2}_1=\lft\{f_1\ge-1,\ g_1\le c\rgt\}\subset\R^{2n+1}\times S^1$, 
where $f_1$, $g_1$ are functions defined as Equations~\eqref{eq:bdrhdl}, 
and $c$ is some constant (see Figure~\ref{fig:symprdhdl}). 
 The Liouville vector field for the symplectic round handle of index $1$ is 
\begin{equation*}
  X_1:=-q_1\uv{q_1}+2p_1\uv{p_1}
  +\frac12\sum_{i=2}^{n-1}\lft(q_i\uv{q_i}+p_i\uv{p_i}\rgt)+z\uv z. 
\end{equation*}

  We observe the contact structure induced on the attaching region 
$\rd_-R^{2n+2}_1$. 
 For the model symplectic round handle, 
the attaching region $\rd_-R^{2n+2}_1$ is in $\{f_1=-1\}=:W_-$ 
(see Figure~\ref{fig:symprdhdl}). 
 From the fact that the diffeomorphism constructed by the flow lines 
of the Liouville vector field $X_1$ preserves the induced contact structure, 
we calculate on $A_\pm:=\{q_1=\pm1\}$ instead. 
 The contact form induced on $A_\pm$ from $\omega_0$ and $X_1$ is 
\begin{align*}
  (X_1\intp\omega_0)|_{TA_\pm}
  &=\lft.\lft\{2p_1dq_1+q_1dp_1
  +\frac{1}{2}\sum_{i=2}^n(p_idq_i-q_idp_i)+zd\phi\rgt\}\rgt|_{TA_\pm} \\
  &=(\pm dp_1+zd\phi)+\frac{1}{2}\sum_{i=2}^n(p_idq_i-q_idp_i). 
\end{align*}
 The attaching core is 
$\lft\{q_1=\pm1,q_2=0,\dots,q_n=0,p_1=0,\dots,p_n=0\rgt\}\cong S^1\sqcup S^1$. 
 Then the attaching region $\rd_-R^{2n+2}_1$ consists of two connected components,
and each component is the standard tubular neighborhood 
of an isotropic circle with trivial $\csn(S^1,\rd_-R^{2n+2}_1)$. 

  The boundary of the attaching region $\rd_-R^{2n+2}_1$ 
is a convex hypersurface 
which is diffeomorphic to $S^1\times \rd D^{2n}$ 
with a dividing set $S^1\times S^{2n-2}\subset S^1\times \rd D^{2n}$. 
 In fact, we can verify its dividing set as follows. 
 We work on one component $A_+=\{q_1=1\}$, 
and let $\alpha$ denote the induced contact form $(X_1\intp\omega_0)|_{TA_+}$. 
 On each local charts $\{p_1=\pm1\}$, $\{p_i=\pm1\}$, $\{q_i=\pm1\}$, 
($i=2,3,\dots,n$), and $\{z=\pm1\}$, we calculate the characteristic foliation. 
 On $\{p_1=\pm1\}$, we have 
\begin{align*}
  (\alpha\wedge(d\alpha)^{n-1})|_{T\{p_i=\pm1\}}
  =&zd\phi\wedge\lft(\bigwedge_{i=2}^ndp_i\wedge dq_i\rgt) 
  +\frac{1}{2}\sum_{i=2}^np_idq_i
  \wedge\lft(\bigwedge_{j\ne i}dp_j\wedge dq_j\rgt)\wedge dz\wedge d\phi \\
  &-\frac{1}{2}\sum_{i=2}^nq_idp_i
  \wedge\lft(\bigwedge_{j\ne i}dp_j\wedge dq_j\rgt)\wedge dz\wedge d\phi
\end{align*}
 Then the vector field $V$ that generates the characteristic foliation 
for the volume form 
$\Omega=\pm\lft(\bigwedge_{i=2}^ndp_i\wedge dq_i\rgt)\wedge dz\wedge d\phi$
(i.e.~$V\intp\Omega=\alpha\wedge d\alpha|_{T\{p_1=\pm1\}}$) is 
\begin{equation*}
  V=\pm\lft\{z\uv z
  +\frac{1}{2}\sum_{i=2}^n\lft(p_i\uv{p_i}+q_i\uv{q_i}\rgt)\rgt\}. 
\end{equation*}
 Similarly, on $\{p_i=\pm1\}$, 
the vector field is $V=-\uvv{q_i}+(1/2)\uvv{p_1}$. 
 On $\{q_i=\pm1\}$, the vector field is $V=\uvv{p_i}+(1/2)\uvv{p_1}$. 
 On $\{z=\pm1\}$, the vector field is $V=-\uvv{\phi}+\uvv{p_1}$. 
 Then $\rd\lft(\rd_-R^{2n+2}_1\rgt)\cap\{p_1=0\}\cong S^1\times S^{2n-2}$ 
is a dividing set. 

  Next, we observe the contact structure induced on the belt region 
$\rd_+R^{2n+2}_1$. 
 For the model symplectic round handle, 
the belt region $\rd_+R^{2n+2}_1$ is in $\{g_1=c\}=:V_c$ 
(see Figure~\ref{fig:symprdhdl}). 
 From the same reason as above, 
it is sufficient to calculate on 
$\lft\{p_1^2+\sum_{i=2}^n(p_i^2+q_i^2)+z^2=1\rgt\}
\cong\R\times S^{2n-1}\times S^1$. 
 Further, for convenience, we calculate on 
$B^1_{\pm}:=\{p_1=\pm1\}$, $B^{p_i}_{\pm}:=\{p_i=\pm1\}$, 
$B^{q_i}_{\pm}:=\{q_i=\pm1\}$, ($i=2,\dots,n$), and $B^z_\pm:=\{z=\pm1\}$. 

  The contact form induced on $B^1_\pm$ is 
\begin{equation*}
  \alpha=(X_1\intp\omega_0)|_{TB^1_\pm}
  =\pm2dq_1+\frac{1}{2}\sum_{i=2}^n(p_idq_i-q_idp_i)+zd\phi. 
\end{equation*}
 On the belt core $BC:=\{q_1=0\}\subset B^1_\pm$ with the volume form 
$\Omega=\mp\bigwedge_{i=2}^n(dp_i\wedge dq_i)\wedge dz\wedge d\phi$, 
the vector field $V$ that generate the characteristic foliation $BC_\xi$, 
(i.e. $V\intp\Omega=\alpha\wedge(d\alpha)^{n-1}$), is 
\begin{equation*}
  V=\mp\frac{1}{2}\sum_{i=2}^n\lft(p_2\uv p_2+q_2\uv q_2\rgt)+z\uv z. 
\end{equation*}

  By similar arguments, we obtain the induced contact structures 
and the characteristic foliation on the belt core. 
 On $B^{p_i}_{\pm}=\{p_i=\pm1\}$, the induced contact form is 
\begin{equation*}
  \alpha=q_1dp_1+2p_1dq_1\pm\frac{1}{2}dq_i
  +\frac{1}{2}\sum_{\substack{j=2\\j\ne i}}^n(p_jdq_j-q_jdp_j)+zd\phi. 
\end{equation*}
 The vector field on $BC=\{q_1=0\}\subset B^{p_i}_\pm$ 
generating the characteristic foliation is $V=(1/2)\uvv{p_1}$. 
 On $B^{q_i}_{\pm}=\{q_i=\pm1\}$, the induced contact form is 
\begin{equation*}
  \alpha=q_1dp_1+2p_1dq_1\mp\frac{1}{2}dp_i
  +\frac{1}{2}\sum_{\substack{j=2\\j\ne i}}^n(p_jdq_j-q_jdp_j)+zd\phi. 
\end{equation*}
 The vector field on $BC=\{q_1=0\}\subset B^{q_i}_\pm$ 
generating the characteristic foliation is $V=(1/2)\uvv{p_1}$. 
 On $B^z_{\pm}=\{z=\pm1\}$, the induced contact form is 
\begin{equation*}
  \alpha=q_1dp_1+2p_1dq_1
  +\frac{1}{2}\sum_{j=2}^n(p_jdq_j-q_jdp_j)\pm d\phi. 
\end{equation*}
 The vector field on $BC=\{q_1=0\}\subset B^{z}_\pm$ 
generating the characteristic foliation is $V=\uvv{p_1}$. 

  As a consequence, from Theorem~\ref{thm:convFlex}, 
the belt region $\rd_+R^{2n+2}_1$ is the invariant tubular neighborhood 
of the convex hypersurface $BC\cong S^{2n-1}\times S^1$ 
with the dividing set $S^{2n-2}\times S^1\subset S^{2n-1}\times S^1\cong BC$. 

\subsubsection{Contact round surgery of index $1$}
\label{sec:ind1surg}
  A precise description of the contact round surgery of index $1$ 
of a $(2n+1)$-dimensional contact manifold is given here. 
 By definition, the surgery is an operation 
attaching a $(2n+2)$-dimensional symplectic round handle of index $1$ 
to the convex boundary of the trivial cobordism of the given contact manifold 
along an isotropic circle 
with a trivialization of the conformal symplectic normal bundle 
(see Subsection~\ref{sec:hdimsurg}). 
 In other words, it is the operation 
removing the attaching region of the symplectic round handle 
and regluing the belt region of the symplectic round handle. 
 We have observed what the regions are, in Subsubsection~\ref{sec:symprhi1}. 
 We describe the surgery from the view point 
of $(2n+1)$-dimensional contact manifolds. 

  The situation is the following. 
 Let $(M,\xi)$ be a $(2n+1)$-dimensional contact manifold, 
and $L_i\subset(M,\xi)$, $i=1,2$, two non-intersecting isotropic circles 
with trivializations of $\csn(L_i,M)$. 

  First, we remove the interior of some tubular neighborhoods 
of $L_1$ and $L_2$. 
 By Proposition~\ref{prop:isotnbd}, 
there exist the standard tubular neighborhoods $U_i$ of $L_i\subset(M,\xi)$. 
 The symplectic round handle is attached there. 
 Then we remove the interior of the attaching region 
of the $(2n+2)$-dimensional symplectic round handle of index $1$, 
which is the standard tubular neighborhood $\tilde U_i\subset (M,\xi)$ 
of the isotropic circles $L_1$, $L_2$ 
(See Subsubsection~\ref{sec:symprhi1}). 
 Recall also that the boundaries $\rd\tilde U_i\cong S^{2n-1}\times S^1$ 
are convex hypersurfaces with the dividing sets 
$S^{2n-2}\times S^1\subset S^{2n-1}\times S^1$. 

  Next, we reglue a tubular neighborhood of $S^{2n-1}\times S^1$ 
with some contact structure. 
 By definition, it is the belt region 
of the $(2n+2)$-dimensional symplectic round handle of index~$1$. 
 From the observation in Subsubsection~\ref{sec:symprhi1}, 
it is the invariant tubular neighborhood of the convex hypersurface 
$S^{2n-1}\times S^1$ with the dividing set 
$S^{2n-2}\times S^1\subset S^{2n-1}\times S^1$. 
 Then, since both the section by the removal above 
and the boundary of the invariant tubular neighborhood 
are convex hypersurfaces with the same dividing set, 
they are glued together. 
 However, it is clear that the boundaries of the attaching and belt regions 
of the symplectic round handle agree, though (see Figure~\ref{fig:symprdhdl}). 

  Thus, we obtain a new $(2n+1)$-dimensional contact manifold. 
 This procedure is the contact round surgery of index~$1$ 
of a $(2n+1)$-dimensional contact manifold $(M,\xi)$ 
along an isotropic link $L_1\sqcup L_2\subset(M,\xi)$ 
with the trivialization of $\csn(L_i,M)$ 
from the view point of contact manifold. 

\subsubsection{Contact round surgery of index $2n$}
\label{sec:ind2nsurg}
  Following the observation above, 
the contact round surgery of index~$2n$ 
of a contact $(2n+1)$-dimensional manifold is introduced here. 
 Recall that it is not defined in Subsection~\ref{sec:hdimsurg} 
because the $(2n+2)$-dimensional symplectic round handle of index~$k$ 
is defined for $k=0,1,\dots,n$. 
 In order to define the surgery, 
we need the convex hypersurface theory 
instead of the neighborhood theorem for isotropic submanifold. 

  First, we review the topological $(2n+1)$-dimensional round surgery 
of index~$2n$. 
 Let $M$ be a $(2n+1)$-dimensional manifold. 
 The round surgery of index~$2n$ of $M$ is defined 
by the attachment of $(2n+2)$-dimensional round handle 
$R^{2n+2}_{2n}=D^{2n}\times D^1\times S^1$ of index~$2n$ 
to the boundary of the trivial cobordism $M\times[0,1]$. 
 In other words, the surgery is the operation 
removing the attaching region $\rd_-R^{2n+2}_{2n}=\rd D^{2n}\times D^1\times S^1$ 
and regluing the belt region $\rd_+R^{2n+2}_{2n}=D^{2n}\times\rd D^1\times S^1$
trivially. 
 As manifolds, the $(2n+2)$-dimensional round handles 
$R^{2n+2}_{2n}$, $R^{2n+2}_1$ of index~$2n$ and~$1$ are diffeomorphic. 
 The attaching (resp.\ belt) region of $R^{2n+2}_{2n}$ 
is the belt (resp.\ attaching) region of $R^{2n+2}_1$. 
 Therefore, the round surgery of index~$2n$ can be regarded 
as the converse operation to that of index~$1$. 

  From the observation above, 
we define the contact round surgery of index~$2n$ 
of a $(2n+1)$-dimensional contact manifold 
as the converse operation to that of index~$1$. 
 Let $(M,\xi)$ be a $(2n+2)$-dimensional contact manifold, 
and $H=S^{2n-1}\times S^1\subset(M,\xi)$ a convex hypersurface 
with the dividing set $S^{2n-2}\times S^1\subset H$, that is, 
the product of the equator of $S^{2n-1}$ and $S^1$. 
 As $H\subset(M,\xi)$ is convex, 
there exists the invariant tubular neighborhood $U\cong H\times[0,1]$ of $H$. 
 We remove the interior $\inter U\subset(M,\xi)$ 
of the invariant tubular neighborhood. 
 Note that the section appeared by this removal 
is two convex hypersurfaces $H\times\{0,1\}$ 
with dividing sets $S^{2n-2}\times S^1\times\{0,1\}$. 
 Take the manifold $D^{2n}\times S^1$ 
with the contact structure 
\begin{equation*}
  \eta:=\ker\lft\{(\cos\phi)dp_1-(\sin\phi)dq_1
  +\frac{1}{2}\sum_{i=2}^n(p_idq_i-q_idp_i)\rgt\}, 
\end{equation*}
where $(p_1,q_1,\dots,p_n,q_n,\phi)\in D^{2n}\times S^1$ are coordinates. 
 Then the boundary $\rd D^{2n}\times S^1\subset(D^{2n}\times S^1,\xi)$ 
is convex with the dividing set $S^{2n-2}\times S^1$. 
 It is contactomorphic to one component of the attaching region 
of the symplectic round handle. 
 We can glue two $(D^{2n}\times S^1,\eta)$ to $(M,\xi)\setminus\inter U$ 
according to the dividing sets. 
 Thus, we obtain a new $(2n+1)$-dimensional contact manifold, 
which is diffeomorphic, as manifolds, 
to the manifold obtained from $M$ by round surgery of index~$2n$ 
along the hypersurface $H=S^{2n}\times S^1$. 
 We call this operation the \emph{contact round surgery of index~$2n$} 
of the $(2n+2)$-dimensional contact manifold $(M,\xi)$ 
along the convex hypersurface $H=S^{2n-1}\times S^1$ 
with dividing set $S^{2n-2}\times S^1\subset H$. 


\section{Generalized Lutz twists by contact round surgeries}
\label{sec:gentw}
  The generalized Lutz twists 
defined in Section~\ref{sec:genlztw} and Subsection~\ref{sec:circsph}
are described by using contact round surgeries in this section 
(in Subsection~\ref{sec:ltwtrvknt} and~\ref{sec:ggtorbyrsrg} respectively). 
 In other words, the model $\pi$-Lutz tube along any transverse knot, 
and the wide Giroux domain along any $\xi$-round hypersurface 
$S^{2n-1}\times S^1$ modeled on the standard contact sphere 
in a $(2n+1)$-dimensional contact manifold are realized 
by contact round surgeries of index $1$ and $2n$ 
with the model $\pi$-Lutz tube. 
 Then Theorem~\ref{thm:hdim} is proved here. 

  Further, from this point of view, 
we notice that the important object is the double $\lft(\du,\tilde\zeta\rgt)$. 
 The two modifications are regarded as procedures taking in this unit 
(see Figures~\ref{fig:l1tw} and~\ref{fig:l2ntw}). 

  Although the generalized Lutz twist is operated along a transverse circle, 
the contact round surgery of index~$1$ is operated along isotropic circles. 
 Similarly, although the generalized Lutz twist is operated 
along a $\xi$-round hypersurface, 
the contact round surgery of index~$2n$ is operated along a convex hypersurface.
 Such approximation procedures are introduced in Subsection~\ref{sec:approx}. 

\subsection{Approximation procedures}\label{sec:approx}
  Before giving the descriptions of the generalized Lutz twists 
by contact round surgeries, 
we should prepare the attaching cores of the symplectic round handles 
to apply surgeries. 
 We need isotropic circles and convex hypersurfaces 
instead of the given transverse circles and $\xi$-round hypersurfaces. 

\subsubsection{Isotropic push-off}\label{sec:istrpoff}
 First, we need a method to take an isotropic curve 
close to the given transverse curve.
 It is a generalization of the so-called Legendrian push-off 
of a transverse curve in a contact $3$-manifold. 
 The argument is similar to the transverse push-off 
in Subsubsection~\ref{sec:findtrvs1}. 

  Let $\gamma$ be a transverse circle 
in a $(2n+1)$-dimensional contact manifold $(M,\xi)$. 
 Then, from Corollary~\ref{cor:s1dar}, 
there exists the standard tubular neighborhood $U\subset(M,\xi)$ of $\gamma$ 
which is contactomorphic to some tubular neighborhood of a transverse circle 
$S^1\times\{0\}$ in the standard contact open solid torus 
$(S^1\times\R^{2n},\xi_0)$ with  
$\xi_0=\ker\lft\{d\phi+\sum_{i=1}^n(x_idy_i-y_idx_i)\rgt\}$, 
where $\phi$ is a coordinate of $S^1$, 
$(x_i,y_i)$ are coordinates of $\R^2$, $i=1,\dots,n$. 
 By the projection $\pi\colon S^1\times\R^{2n}\to S^1\times\R^2$, 
$(\phi,x_1,y_1,\dots,x_n,y_n)\mapsto(\phi,x_1,y_1)$, 
we have a transverse curve $\pi(\gamma)\subset(S^1\times\R^2,\pi_\ast\xi_0)$ 
in the $3$-dimensional standard contact tubular neighborhood. 
 Then we can take a Legendrian push-off $L(\pi(\gamma))$ of $\pi(\gamma)$. 
 In other words, we take a closed leaf of slope $1/n$, $n\in\Z$, 
of the characteristic foliation on the pre-Lagrangian boundary 
of the standard tubular neighborhood 
of $\pi(\gamma)\subset(S^1\times\R^2,\pi_\ast\xi_0)$. 
 As a circle in $S^1\times\R^{2n}\supset S^1\times\R^2$, 
the obtained curve $L(\pi(\gamma))$ is isotropic for $\xi_0$, 
and is homotopic to the original curve $\gamma$. 
 We call the obtained curve $L(\pi(\gamma))\subset(M,\xi)$ 
an \emph{isotropic push-off}\/ of $\gamma$. 
 In addition, we have a tubular neighborhood 
$\tilde U\subset(M,\xi)$ of $L(\pi(\gamma))$ 
which is contactomorphic to some tubular neighborhood of an isotropic circle 
$S^1\times\{0\}$ in the standard contact open solid torus 
$(S^1\times\R^{2n},\tilde\xi_0)$ with 
\begin{equation*}
  \tilde\xi_0=\ker\lft\{(\cos\phi)d\tilde x_1-(\sin\phi)d\tilde y_1
  +\sum_{i=2}^n(x_idy_i-y_idx_i)\rgt\}, 
\end{equation*}
where $\tilde x_1,\tilde y_1$ are new coordinates replaced with $x_1,y_1$ 
that depend on the choice of the projection $\pi$. 
 With this coordinates, a trivialization 
of the conformal symplectic normal bundle $\csn(L(\pi(\gamma)),M)$ is given by 
\begin{equation*}
  \lft\{(\cos\phi)\uv{\tilde x_1}-(\sin\phi)\uv{\tilde y_1},\uv{x_2},\uv{y_2},
  \dots,\uv{x_n},\uv{y_n}\rgt\}. 
\end{equation*} 
 Like the $3$-dimensional case, the first element depends on the thickness 
of the tubular neighborhood the transverse circle. 
 Note that the boundary of the tubular neighborhood 
is a convex surface $S^{2n-1}\times S^1$ with a dividing set $S^{2n-2}\times S^1$.

\subsubsection{From a $\xi$-round hypersurface to a convex hypersurface}
\label{sec:xirdtocvx}

  Next, we perturb the given $\xi$-round hypersurface 
$H\cong S^{2n-1}\times S^1$ in a contact $(2n+1)$-dimensional manifold $(M,\xi)$ 
so that it becomes convex. 
 Although, for any surface in a contact $3$-manifold, 
there exists a convex surface that approximates it, 
the fact is not true in higher dimensions. 
 However, in this case, a $\xi$-round hypersurface $H\cong S^{2n-1}\times S^1$ 
modeled on the standard contact sphere $(S^{2n-1},\eta_0)$ is perturbed 
to a convex surface $\tilde H\cong S^{2n-1}\times S^1$  
with a dividing set $S^{2n-2}\times S^1\subset S^{2n-1}\times S^1$. 
 It is explained as follows. 

  Recall that, from Proposition~\ref{prop:xi-rdhyps}, 
the $\xi$-round hypersurface $H\subset(M,\xi)$ is identified with 
$V(\rho):=S^1\times\rd D^{2n}(\rho)=\lft\{\sum_{i=1}^nr_i^2=\rho^2\rgt\}
=\lft\{\sum_{i=1}^n(x_i^2+y_i^2)=\rho^2\rgt\}$ 
in $(S^1\times\R^{2n},\xi_0)$ with 
\begin{equation*}
  \xi_0=\ker\lft\{d\phi+\sum_{i=1}^nr_i^2d\theta_i\rgt\} 
  =\ker\lft\{d\phi+\sum_{i=1}^n(x_idy_i-y_idx_i)\rgt\} 
\end{equation*}
for some $\rho>0$,
where $(\phi,r_1,\theta_1,\dots,r_n,\theta_n)$ and 
$(\phi,x_1,y_1,\dots,x_n,y_n)$ 
are coordinates of $S^1\times\R^{2n}$ (see Subsection~\ref{sec:circsph}). 
 The characteristic foliation $H_\xi$ on $H\subset(M,\xi)$ 
consists of parallel closed leaves. 
 There exists a closed integral hypersurface 
$h\cong S^{2n-2}\times S^1\subset S^{2n-1}\times S^1\cong H$ of $H_\xi$ 
which cuts $H$ into two parts. 
 By pushing $H=V(\rho)\subset S^1\times\R^{2n}$ along $h\subset H$ slightly 
to the center, 
we can modify the characteristic foliation $H_{\xi_0}$ 
to $\tilde H_{\xi_0}$ on the modified hypersurface $\tilde H$ 
which is still non-singular. 
 From the normal form of $\xi_0$ above, the modified characteristic foliation 
$\tilde H_{\xi_0}$ is transverse 
to the hypersurface $\tilde h\subset\tilde H$ modified from $h\subset H$ 
(see Figure~\ref{fig:xirdtocvx}). 
 Then $\tilde H\cong S^{2n-1}\times S^1$ is a convex hypersurface 
with $\tilde h\cong S^{2n-2}\times S^1\subset\tilde H$ as a dividing set. 

%
%
\begin{figure}[htb]
  \centering
  {\small \includegraphics[height=3.3cm]{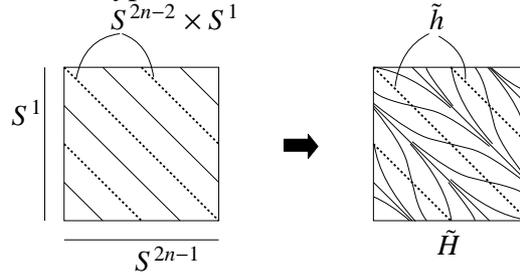}}
  \caption{$\xi$-round hypersurface to convex hypersurface}
  \label{fig:xirdtocvx}
\end{figure} 

%
%
\begin{rem}
  The procedure above is a generalization of 
the procedure that makes a pre-Lagrangian torus in a contact $3$-manifold 
convex with two parallel dividing curves. 
 In that case, the hypersurface is $H=S^1\times S^1$ 
and the dividing set is $\tilde h=S^0\times S^1$ 
(see Figure~\ref{fig:xirdtocvx}). 
\end{rem} 

\subsection{Generalized Lutz twist along a transverse circle} 
\label{sec:ltwtrvknt}
  The generalized Lutz twist along a transverse circle is realized 
by contact round surgeries of index~$1$ and~$2n$ in this subsection. 
 Let $\gamma$ be a transverse circle in a $(2n+1)$-dimensional contact manifold 
$(M,\xi)$. 
 We realize the generalized Lutz twist along $\gamma\subset(M,\xi)$ 
by contact round surgeries of index~$1$ and~$2n$ 
with the model $\pi$-Lutz tube $\lft(\mlt,\tilde\zeta\rgt)$. 
 Recall that the model $\pi$-Lutz tube is diffeomorphic, as a manifold, 
to $S^1\times D^{2n}$, which is constructed 
from $\lft(U(\sqrt{\pi}),\zeta\rgt)\subset\lft(S^1\times\R^{2n},\zeta\rgt)$, 
where $\zeta$ is the confoliation 
\begin{equation*}
  \zeta=\ker\lft\{\lft(\prod_{i=1}^{n}\cos r_i^2\rgt)d\phi
  +\sum_{i=1}^{n}\sin r_i^2d\theta_i\rgt\}. 
\end{equation*}
 The model $\pi$-Lutz tube $\lft(\mlt,\tilde\zeta\rgt)$ is constructed 
from the double of $\lft(U(\sqrt{\pi}),\zeta\rgt)$ 
by removing the standard tubular neighborhood 
of one of the transverse core $S^1\times\{0\}$ 
of $\lft(U(\sqrt{\pi}),\zeta\rgt)$ (see Subsubsection~\ref{sec:dbl-mlt}). 

  The rough story is as follows. 
 First, we operate the contact round surgery of index~$1$ 
along a certain isotropic circle 
close to the given transverse circle $\gamma\subset(M,\xi)$ 
and a certain isotropic circle 
close to the transverse core $\Gamma\subset\lft(\mlt,\tilde\zeta\rgt)$ 
of the model $\pi$-Lutz tube. 
 Then we operate the contact round surgery of index~$2n$ 
along a certain $\tilde\zeta$-round hypersurface $H\cong S^{2n-1}\times S^1$ 
isotopic as manifolds to $\rd\mlt$ but far from $\rd\mlt$ 
in the attached $\mlt$. 
 As a result, the given manifold $M$ is recovered to the original. 
 In addition, some part of the model $\pi$-Lutz tube is left 
in the manifold $M$. 
 In the following, we carefully describe these operations. 

  \underline{Operation~1.}\ 
 The first operation is the contact round surgery of index~$1$ 
between the given manifold $(M,\xi)$ 
and the model $\pi$-Lutz tube $\lft(\mlt,\tilde\zeta\rgt)$. 
 Recall that the contact round surgeries of index~$1$ is operated 
along a pair of isotropic circles. 
 We take isotropic push-offs 
of the given transverse curve $\gamma\subset(M,\xi)$ 
and the transverse core $\Gamma\subset\lft(\mlt,\tilde\zeta\rgt)$ 
(see Subsubsection~\ref{sec:istrpoff}). 
 Let $L(\gamma)$ and $L(\Gamma)$ denote them. 
 Now, we operate the contact round surgery of index~$1$ 
along isotropic circles $L(\gamma)\subset(M,\xi)$ 
and $L(\Gamma)\subset\lft(\mlt,\tilde\zeta\rgt)$. 
 Let $(M,\xi)\rsurg{L(\gamma),L(\Gamma)}\lft(\mlt,\tilde\zeta\rgt)$ 
denote the obtained contact manifold. 
 From the observation in Subsubsection~\ref{sec:ind1surg}, 
the operation corresponds to the removal of the standard tubular neighborhoods 
of isotropic circles $L(\gamma)\subset(M,\xi)$ 
and $L(\Gamma)\subset\lft(\mlt,\tilde\zeta\rgt)$,  
and the attachment of the invariant tubular neighborhood of the convex surface 
$S^{2n-1}\times S^1$ with a dividing set $S^{2n-2}\times S^1$. 

  \underline{Operation~2.}\ 
 The next operation is the contact round surgery of index~$2n$ 
of the obtained contact manifold 
$(M,\xi)\rsurg{L(\gamma),L(\Gamma)}\lft(\mlt,\tilde\zeta\rgt)$. 
 Recall that the contact round surgery of index~$2n$ is operated 
along a convex hypersurface $H=S^{2n-1}\times S^1$ 
with a dividing set $S^{2n-2}\times S^1$. 
 We take such a convex hypersurface close to the boundary 
$\rd\lft\{(M,\xi)\rsurg{L(\gamma),L(\Gamma)}
\lft(\mlt,\tilde\zeta\rgt)\rgt\}
=\rd\mlt$ 
so that they are homotopic. 
 In fact, the boundary $\rd\mlt$ of the model $\pi$-Lutz tube 
is the boundary of the removed standard tubular neighborhood 
of one of the transverse core of the double 
of $\lft(U(\sqrt{\pi}),\zeta\rgt)$, 
from the construction of the model $\pi$-Lutz tube 
(see Subsubsection~\ref{sec:dbl-mlt}). 
 In other words, the boundary $\rd\mlt$ 
is a $\tilde\zeta$-round hypersurface modeled on the standard contact sphere. 
 Then, by the argument in Subsubsection~\ref{sec:xirdtocvx}, 
we can perturb the boundary to have the required convex hypersurface 
$\tilde H=S^{2n-1}\times S^1$. 
 Now, we operate the contact round surgery of index~$2n$ 
along the convex hypersurface $\tilde H$. 
 From the observation in Subsubsection~\ref{sec:ind2nsurg}, 
the operation corresponds to the removal of the invariant tubular neighborhood 
of the convex hypersurface $\tilde H$ 
and the attachment of the standard tubular neighborhoods of isotropic circles. 

  We confirm that the operations amount to the generalized Lutz twist. 
 Throughout the operations, we can adjust the framings of surgeries, 
or the framings of the conformal symplectic normal bundles, 
by the choice of the projection $\pi$ 
and the thickness of the tubular neighborhoods 
(see Subsection~\ref{sec:approx}). 
 We can take the framing of the second contact round surgery of index~$2$ 
so that it recovers the model $\pi$-Lutz tube to the double 
$\lft(\du,\tilde\zeta\rgt)
=\lft(U(\sqrt{\pi}),\zeta\rgt)\cup\lft(U(\sqrt{\pi}),\zeta\rgt)$. 
And we can take the framing of the first contact round surgery of index~$1$ 
so that it does not change the manifold 
as the surgery between $(M,\xi)$ and the double. 
 Therefore, the operations can be regarded as the replacement 
of the tubular neighborhood of $\gamma\subset(M,\xi)$ 
with the double $\lft(\du,\tilde\zeta\rgt)$ 
without a tubular neighborhood of the transverse core. 
 The second object is nothing but the model $\pi$-Lutz tube. 
 Thus, we conclude that the operations amount to the generalized Lutz twist. 

%
%
\begin{rem}\label{rem:budsrg}
  The object obtained from the double 
$\lft(\du,\tilde\zeta\rgt)
=\lft(U(\sqrt{\pi}),\zeta\rgt)\cup\lft(U(\sqrt{\pi}),\zeta\rgt)$ 
by removing the tubular neighborhood of the both transverse cores 
is called the wide Giroux domain $\wgd$ (see Subsection~\ref{sec:circsph}). 
 Then the double $\du$, the model $\pi$-Lutz tube $\mlt$, 
and the wide Giroux domain $\wgd$ 
are related by the ``blowing up and down procedures''. 
 In that sense, the contact round surgeries of index~$1$ and~$2n$ 
have some relations with the blowing up and down, respectively. 
 In order to complete the Lutz twist, we should blow down 
the outer end of the Lutz tube. 
 That is the reason why we take the double 
$\lft(\du,\tilde\zeta\rgt)$ of $\lft(U(\sqrt{\pi}),\zeta\rgt)$ 
for the definition of the generalized Lutz twist 
(see also Subsubsection~\ref{sec:revrdsurgrep}). 

  From the observation above, 
instead of the contact round surgery of index~$2n$, 
we can use the double 
$\lft(\du,\tilde\zeta\rgt)$. 
 Because it is obtained from the model $\pi$-Lutz tube by blowing down 
the outer end. 
 In other words, the generalized Lutz twist along a transverse circle 
is the contact round surgery along the isotropic push-offs 
of the given transverse circle and one of the core transverse circles 
in the double (see Figure~\ref{fig:l1tw}). 
 Then compare Figure~\ref{fig:l1tw} with Figure~\ref{fig:cnstgltube}. 
%
%
\begin{figure}[htb]
  \centering
  {\small \includegraphics[height=3.3cm]{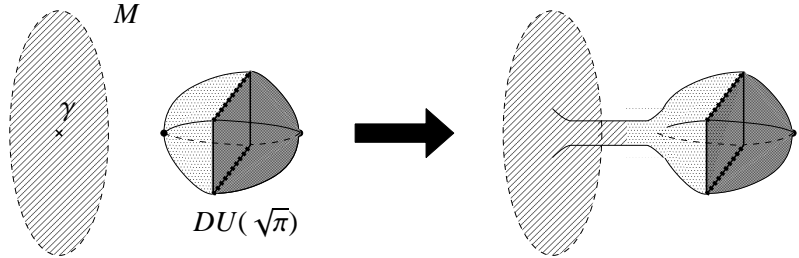}}
  \caption{Generalized Lutz twist along a circle $\pmod{\times S^1}$.}
  \label{fig:l1tw}
\end{figure}
\end{rem} 

\subsection{Generalized Lutz twist along a $\xi$-round hypersurface}
\label{sec:ggtorbyrsrg}
  The generalized Lutz twist along a $\xi$-round hypersurface is realized 
by contact round surgeries of index~$2n$ and~$1$ in this subsection. 
 Let $H\cong S^{2n-1}\times S^1$ be a $\xi$-round hypersurface 
modeled on the standard contact sphere $(S^{2n-1},\eta_0)$ 
in a $(2n+1)$-dimensional contact manifold $(M,\xi)$. 
 We realize the generalized Lutz twist along $H$ by contact round surgeries 
of index~$2n$ and~$1$. 

 The rough story is as follows. 
 First, we operate the contact round surgery of index~$2n$ 
along a certain convex hypersurface 
$\tilde H\cong S^{2n-1}\times S^1\subset(M,\xi)$ 
which is close to the given $\xi$-round hypersurface $H$. 
 By this operation, two tubular neighborhoods of isotropic circles 
are glued to the contact manifold (see Subsubsection~\ref{sec:ind2nsurg}). 
 We operate the generalized Lutz twist along a transverse push-off of 
one of the core isotropic circles. 
 In other words, by the arguments in Subsection~\ref{sec:ltwtrvknt}, 
this operation is realized by the contact round surgeries 
of index~$1$ and~$2n$. 
 Note that the model $\pi$-Lutz tube is left in the modified contact manifold. 
 Then, furthermore,  we operate the contact round surgery of index~$1$ 
along isotropic circles in the glued tubular neighborhoods. 
 As a result, the given manifold $M$ is recovered to the original one. 
 In addition, the wide $\pi$-Giroux domain is left in the manifold $M$. 
 In the following, we carefully describe these operations. 

  \underline{Operation~1.}\ 
 The first operation is the contact round surgery of index~$2n$. 
 Recall that the contact round surgery of index~$2n$ is operated 
along a convex hypersurface $S^{2n-1}\times S^1\subset(M,\xi)$ 
with a dividing set $S^{2n-2}\times S^1\subset S^{2n-1}\times S^1$ 
(see Subsubsection~\ref{sec:ind2nsurg}). 
 We perturb the given $\xi$-round hypersurface $H\subset(M,\xi)$ 
to a convex hypersurface. 
 From the argument in Subsubsection~\ref{sec:xirdtocvx}, 
we obtain a convex hypersurface $\tilde H\cong S^{2n-1}\times S^1$ 
with a dividing set $S^{2n-2}\times S^1$. 
 Then we operate the contact round surgery of index~$2n$ along $\tilde H$. 
 Recall that, from the description of the surgery obtained 
in Subsubsection~\ref{sec:ind2nsurg}, 
we glue the two standard tubular neighborhoods $(D^{2n}\times S^1,\eta)_i$ 
of isotropic circles $L_i=\{0\}\times S^1$, $i=1,2$, in this operation. 
 Let $(M_1,\xi_1)$ denote the obtained contact manifold. 

  \underline{Operation~2.}\ 
 Next, we operate the generalized Lutz twist 
along the isotropic circle $L_1\subset(M_1\xi_1)$. 
 Although the generalized Lutz twist is operated along a transverse circle, 
from the observation in Subsection~\ref{sec:ltwtrvknt}, 
it can be regarded as round surgeries along an isotropic push-off 
of the transverse circle. 
 Then we obtain a contact structure $\xi_1'$ on $M_1$. 
 Note that, as we observed in Remark~\ref{rem:budsrg}, 
the generalized Lutz twist 
can be regarded as the contact round surgery of index~$1$ 
along the isotropic circle $L_1\subset(M,\xi)$ 
and an isotropic push-off $\tilde l_1$ of one of the transverse cores 
$l_1,\ l_2$ of the double $\lft(\du,\tilde\zeta\rgt)$. 

  \underline{Operation~3.}\ 
 The final operation is the contact round surgery of index~$1$ 
along the isotropic circles $\tilde l_2, L_2\subset(M_1,\xi'_1)$, 
where $\tilde l_2$ is an isotropic push-off of the transverse core $l_2$. 
 We can take the surgery framing, or trivializations 
of the conformal symplectic normal bundles 
$\csn(\tilde l_2,(M_1,\xi'_1))$ and $\csn(L_2,(M_1,\xi'_1))$ 
so that, by the final contact round surgery of index~$1$, 
the manifold gets recovered to the original $M$. 
 In fact, since $\lft(\du,\tilde\zeta\rgt)$ is obtained from the double 
of $(U(\sqrt{\pi}),\zeta)\subset\lft(S^1\times\R^{2n},\zeta\rgt)$, 
we can take an isotropic push-off $\tilde l_2$ of the transverse core 
$l_2\subset\lft(U(\sqrt{\pi}),\zeta\rgt)\subset(M_1,\xi'_1)$ 
so that the trivialization of $\csn(\tilde l_2,\lft(U(\sqrt{\pi}),\zeta\rgt))
=\csn(\tilde l_2,(M_1,\xi'_1))$ 
is the same as that of 
$\csn(\tilde l_1,\lft(U(\sqrt{\pi}),\zeta\rgt))=\csn(L_1,\lft(M_1,\xi_1\rgt))$. 
 As the trivialization of $\csn(L_2,(M_1,\xi'_1))$ 
is the same as that of $\csn(L_2,(M,\xi_1))$, 
we take the one that corresponds to the trivialization 
of $\csn(L_1,(M_1,\xi_1))$ 
because $L_1$ and $L_2$ are separated by the contact round surgery of index~$2$ 
in Operation~1. 
 Then, by the contact round surgery of index~$1$ 
along isotropic circles $\tilde l_2,\ L_2\subset(M_1,\xi'_1)$ 
with the trivialization of $\csn(\tilde l_2,(M_1,\xi'_1))$ 
and $\csn(L_2,(M_1,\xi'_1))$ above, 
we obtain a manifold diffeomorphic to the original $M$ 
with a certain contact structure $\xi_2$ on $M$. 

  We confirm that the operations amount to the generalized Lutz twist 
along the $\xi$-round hypersurface $H\subset(M,\xi)$. 
 Although the manifold $M$ is not changed by Operations~$1$, $2$, and $3$, 
the obtained contact structure $\xi_2$ is modified from the original $\xi$. 
 In fact, in Operation~$3$, a generalized Lutz tube blown up 
along $\tilde l_2$, that is,  a wide Giroux domain is left in $(M,\xi_2)$. 
 Other part of $M$ is not changed. 
 These imply that these operations of contact round surgeries amount 
to the generalized Lutz twist along a $\xi$-round hypersurface $H$. 

%
%
\begin{rem}
  As we observed in Remark~\ref{rem:budsrg}, 
we can describe the generalized Lutz twist 
form the view point of the double 
$\lft(\du,\tilde\zeta\rgt)=(U(\sqrt{\pi}),\zeta)\cup(U(\sqrt{\pi}),\zeta)$. 
 In the operations above, we used the generalized Lutz twist along a circle. 
 Instead of the contact round surgery of index~$2n$ 
in the generalized Lutz twist, or a blowing down procedure, 
we use the double $\lft(\du,\tilde\zeta\rgt)$. 
 In other words, we can describe the generalized Lutz twist 
along a $\xi$-round hypersurface in the following two steps. 
 The first operation is the same as above. 
 The second operation consists of two contact round surgeries of index~$1$ 
along two pairs 
of isotropic push-offs of the core transverse curves of the double 
and isotropic cores attached in the first operation 
(see Figure~\ref{fig:l2ntw}). 
 Then compare Figure~\ref{fig:l2ntw} with Figure~\ref{fig:wgd}. 
%
%
\begin{figure}[htb]
  \centering
  {\small \includegraphics[height=4.8cm]{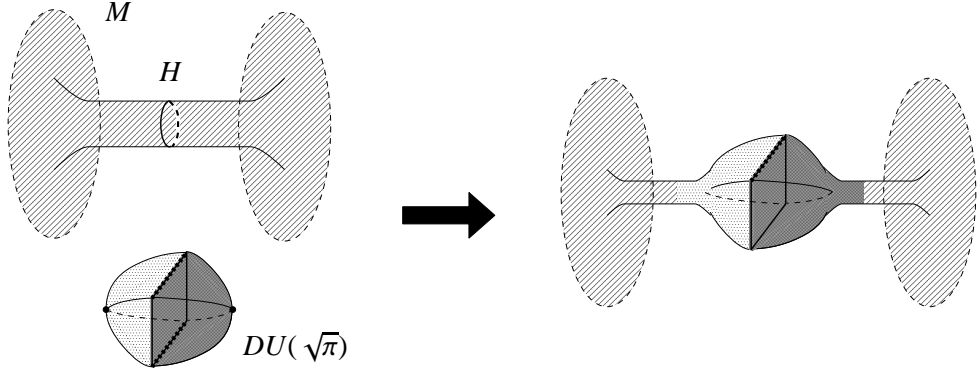}}
  \caption{Generalized Lutz twist along a hypersurface $\pmod{\times S^1}$.}
  \label{fig:l2ntw}
\end{figure}
\end{rem} 

\section{Overtwisted disc}\label{sec:newot}
  When this paper was being prepared, 
a new definition of overtwisted disc for all odd dimensions 
was announced by Borman, Eliashberg, and Murphy~\cite{newOT}. 
 The overtwistedness includes other candidates, 
such as bordered Legendrian open book, 
of overtwistedness in higher dimensions. 
 We introduce the definition, 
and discuss the relation 
between the generalized Lutz twist introduced in this paper 
and the new overtwisted disc in this section. 

\subsection{Overtwisted disc in all dimensions}\label{sec:newdefot}
  First we introduce the definition of overtwisted disc following~\cite{newOT}. 
 An overtwisted disc in a $(2n+1)$-dimensional contact manifold 
is a certain piecewise smooth $2n$-dimensional disc 
with a germ of contact structures along it. 
 In order to define it precisely, we need some notions. 
 Let $\Delta_{\textup{cyl}}\subset\R^{2n-1}$ be a domain defined as 
\begin{equation*}
  \Delta_{\textup{cyl}}:=\lft\{(z,r_1,\phi_1,\dots,r_{n-1},\phi_{n-1})\in\R^{2n-1}
  \;\lft|\;|z|\le1,\ \sum_{i=1}^{n-1}r_i^2\le1\rgt.\rgt\}. 
\end{equation*}
 Let $K_\epsilon\colon\Delta_{\textup{cyl}}\to\R$ be a function which satisfies: 
\begin{equation*}
  K_\epsilon(z,r_i,\phi_i)=
  \begin{cases}
    k_\epsilon(\sum r_i^2)+k_\epsilon(|z|) & 
    (\text{on } \Delta_{\textup{cyl}}\setminus\Delta_\epsilon), \\
    <0 & (\text{on } \inter\Delta_\epsilon), 
  \end{cases}
\end{equation*}
where 
$\Delta_\epsilon:=\lft\{(z,r_i,\phi_i)\;\lft|\;
|z|\le1-\epsilon,\ \sum r_i^2\le1-\epsilon\rgt.\rgt\}$, 
and $k_\epsilon\colon\R_{\ge0}\to\R$ is a function defined as 
\begin{equation*}
  k_\epsilon(s):=
  \begin{cases}
    0 & (s\le1-\epsilon), \\
    s-(1-\epsilon) & (s\ge1-\epsilon). 
  \end{cases}
\end{equation*}
 For such a function $K_\epsilon$ and a constant $C>-\min(K_\epsilon)$, set 
\begin{equation*}
  B^{S^1}_{K_\epsilon,C}:=\lft\{(z,r_i,\phi_i,r,\phi)\in\dcyl\times\mathbb{C}
  \;\lft|\;r^2\le K_\epsilon(z,r_i,\phi_i)+C\rgt.\rgt\}
  \subset\R^{2n-1}\times\mathbb{C}. 
\end{equation*}
 Furthermore, for the function $K_\epsilon\colon\Delta_{\textup{cyl}}\to\R$, 
there exists a family of functions 
$\rho_{(z,r_i,\phi_i)}\colon\R_{\ge0}\to\R$ 
for parameter $(z,r_i,\phi_i)\in\Delta_{\textup{cyl}}$ 
that satisfies 
\begin{enumerate}
\item $\rho_{(z,r_i,\phi_i)}(s)=s$ if $s\in\mathcal{O}p\{0\}\subset\R_{\ge0}$, 
\item $\rho_{(z,r_i,\phi_i)}(s)=s-C$ if $(z,r_i,\phi_i,s,\phi)
  \in\mathcal{O}p\lft\{s\ge K_\epsilon(z,r_i,\phi_i)+C\rgt\}
  \subset\Delta_{\textup{cyl}}\times\mathbb{C}$, 
\item $\frac{\rd}{\rd s}\rho_{(z,r_i,\phi_i)}(s)>0$ 
  if $(z,r_i,\phi_i)\in\mathcal{O}p(\rd\Delta_{\textup{cyl}})
  \subset\Delta_{\textup{cyl}}$, 
\end{enumerate}
where $\mathcal{O}p$ implies open neighborhood. 
 Then, for this family $\rho_{(z,r_i,\phi_i)}(s)$, set 
\begin{equation}\label{eq:otwform}
  \alpha_\rho:=dz+\sum_{i=1}^{n-1}r_i^2d\phi_i+\rho_{(z,r_i,\phi_i)}(r^2)d\phi, 
  \qquad \eta^{S^1}_{K_\epsilon,\rho}:=\ker\alpha_\rho. 
\end{equation}
 Then $\alpha_\rho$ is a $1$-form on $\Delta_{\textup{cyl}}\times\mathbb{C}$, 
and $\eta^{S^1}_{K_\epsilon,\rho}$ is a hyperplane distribution 
on $\Delta_{\textup{cyl}}\times\mathbb{C}$. 
 It is proved in~\cite{newOT} 
that $\eta^{S^1}_{K_\epsilon,\rho}$ is an almost contact structure 
on $B^{S^1}_{K_\epsilon,C}$ which is genuine contact 
near the boundary $\rd B^{S^1}_{K_\epsilon,C}$. 

  Now, overtwisted disc is defined as follows. 
 Let $D_{K_\epsilon}$ be the $2n$-dimensional disc defined as 
\begin{equation*}
  D_{K_\epsilon}:=\lft\{\lft.(z,r_i,\phi_i,r,\phi)\in\rd B^{S^1}_{K_\epsilon,C}
\;\rgt|\;z\in[-1,1-\epsilon]\rgt\}\subset\R^{2n-1}\times\mathbb{C}=\R^{2n+1}. 
\end{equation*}
 There exits a germ of contact structure $\eta_{K_\epsilon}$ along $D_{K_\epsilon}$
by restricting $\eta^{S^1}_{K_\epsilon,\rho}$. 
 For a sufficiently small $\epsilon>0$, 
the pair $(D_{K_\epsilon},\eta_{K_\epsilon})$ 
of a disc and a germ of contact structures is called an \emph{overtwisted disc} 
(see~\cite{newOT} for the precise estimate of $\epsilon$). 

  The left hand side of Figure~\ref{fig:otwdisc} is an overtwisted disc 
in a contact $3$-manifold. 
%
%
\begin{figure}[htb]
  \centering
  {\small \includegraphics[height=3.6cm]{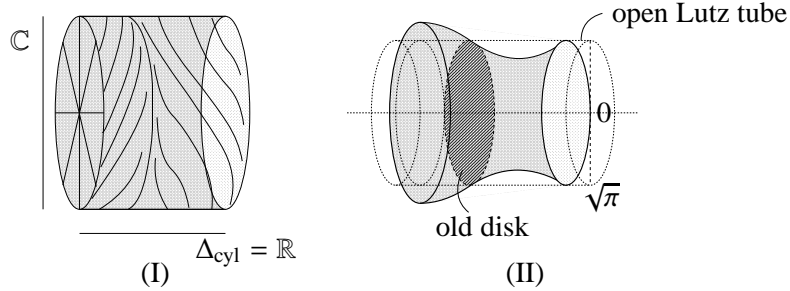}}
  \caption{Overtwisted disc in dimension~$3$}
  \label{fig:otwdisc}
\end{figure} 
 It is realized in a neighborhood of an ``overtwisted disk'' in an old sense 
defined in Subsection~\ref{sec:ovtw} 
(see the right hand side of Figure~\ref{fig:otwdisc}). 
 From this observation, we have an $S^1$-family of overtwisted discs 
in the open Lutz tube $\lft(S^1\times\R^2,\ker\{\cos r^2dx+\sin r^2dy\}\rgt)$. 
 We generalize this observation to higher dimensions 
in the following subsection. 

  Concerning the overtwisted contact structures, 
the following property is proved in~\cite{newOT}. 
%
%
\begin{thrm}[Borman, Eliashberg, Murphy]
  Overtwisted contact structures on a manifold 
which are homotopic as almost contact structures 
are isotopic to each other. 
\end{thrm}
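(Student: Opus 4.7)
The plan is to establish this as an h-principle for overtwisted contact structures, following the strategy of Borman--Eliashberg--Murphy. The idea is that once an overtwisted disc is present, the ambient contact condition becomes flexible enough that any purely formal (almost-contact) deformation can be realized by a genuine contact deformation.

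First I would set up the formal framework. Denote by $\mathrm{Cont}^{ot}(M)$ the space of overtwisted contact structures on $M$ and by $\mathrm{AlmCont}(M)$ the space of almost contact structures. The inclusion $j\colon \mathrm{Cont}^{ot}(M)\hookrightarrow \mathrm{AlmCont}(M)$ is continuous, and the statement is equivalent to saying that $\pi_0(j)$ is injective; in fact one would aim to prove that $j$ is a weak homotopy equivalence, and then specialize to $\pi_0$. To feed the h-principle machinery, it is convenient to work with the parametric version: given a family $\xi_t$, $t\in[0,1]$, of almost contact structures interpolating between two overtwisted $\xi_0$ and $\xi_1$, one produces a family of genuine contact structures with the same endpoints.

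The core steps I would carry out are: (i) exploit the model $(D_{K_\epsilon},\eta_{K_\epsilon})$ recalled in Section~\ref{sec:newdefot} to fix, by contact isotopy, a small closed overtwisted disc $\Delta\subset (M,\xi_0)=(M,\xi_1)$ whose germ of contact structure is the standard one; (ii) on the complement $M\setminus \mathrm{Op}(\Delta)$, use the relative h-principle for \emph{isocontact embeddings into overtwisted manifolds} (which relies crucially on Murphy's h-principle for loose Legendrians) to deform the formal homotopy $\xi_t$, rel.\ endpoints and rel.\ $\mathrm{Op}(\Delta)$, to a homotopy through genuine contact structures; (iii) check that overtwistedness is preserved throughout, because the fixed disc $\Delta$ remains an overtwisted disc at every time $t$. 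The bridge from the formal to the genuine world in step~(ii) is the standard dichotomy between flexibility and rigidity: the presence of $\Delta$ furnishes, for every Legendrian one encounters, a ``stabilization by a zig-zag'' that makes it loose, so Murphy's theorem applies and allows one to $C^0$-approximate any formal isotropic embedding by a genuine one.

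The hard part will be step~(ii), and in particular the construction of the \emph{contact connected sum with the standard overtwisted ball}, which is the mechanism by which looseness is injected into the Legendrian skeleton of a handle decomposition of $M$. Concretely, one must upgrade a covering of $M\setminus\mathrm{Op}(\Delta)$ by contact Darboux charts to a covering by charts each of which contains a \emph{copy} of the overtwisted model, transported from $\Delta$ along arcs, and then show that the formal-to-genuine deformation can be performed chart by chart with controlled overlaps. Carrying out the induction on handles requires the parametric h-principle for overtwisted contact structures on balls with prescribed overtwisted boundary behavior --- this is the deepest technical input of~\cite{newOT}, and once it is in place the globalization via Gromov's sheaf-theoretic h-principle is formal. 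Finally, specializing the parametric statement to $\pi_0$ yields the isotopy claim of the theorem.
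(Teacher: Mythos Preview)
The paper does not prove this theorem at all: it is stated as a result of Borman, Eliashberg, and Murphy and attributed to~\cite{newOT}, with the sentence ``Concerning the overtwisted contact structures, the following property is proved in~\cite{newOT}'' immediately preceding it. There is no proof to compare your proposal against.

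Your sketch is a reasonable high-level summary of the strategy in~\cite{newOT}, but for the purposes of this paper no argument is required or given; the theorem is quoted as background. If you are being asked to supply what the paper does with this statement, the correct answer is simply to cite~\cite{newOT}.
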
 
\subsection{Relation between the generalized Lutz twist 
  and overtwisted disc}
\label{sec:gltw-ot}
  In this subsection, 
we show that there exists an $S^1$-family of overtwisted discs 
in the model $\pi$-Lutz tube $\lft(\mlt,\tilde\zeta\rgt)$ 
defined in Subsection~\ref{sec:defGenLztw}. 
 The claim is as follows. 
%
%
\begin{prop}\label{prop:mlttootw}
  There exists an $S^1$-family of $2n$-dimensional overtwisted discs 
in the $(2n+1)$-dimensional model Lutz tube $\lft(\mlt,\tilde\zeta\rgt)$. 
\end{prop}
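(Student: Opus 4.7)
The plan is to exhibit, for each $\phi_0 \in S^1$, an overtwisted disc in the sense of~\cite{newOT} lying in a neighborhood of the bordered Legendrian open book $N_{\phi_0}$ from the $S^1$-family $P$ constructed in the proof of Proposition~\ref{prop:exblob}. Because $\omtw$ is $\phi$-translation invariant and the perturbation producing $\tilde\zeta$ in Subsubsection~\ref{sec:dbl-mlt} may be chosen $\phi$-equivariantly, it suffices to produce a single such disc; the $S^1$-family is then obtained by $\phi$-translation.

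To construct one disc, I would first arrange, using the cutoff bump function $f$ of Subsubsection~\ref{sec:dbl-mlt}, that $\tilde\zeta = \ker\omtw$ on a full tubular neighborhood of $P$. In this neighborhood, I would work in a local region bounded away from both the non-contact locus of $\omtw$ and the removed transverse core, where the form
\[
\omtw = \lft(\prod_{i=1}^n \cos r_i^2\rgt) d\phi + \sum_{i=1}^n (\sin r_i^2)\, d\theta_i
\]
is literally contact. After factoring $\prod_{i\ge 2}\cos r_i^2$ out of the $d\phi$-coefficient and applying a monotone change of coordinates to recast each $\sin r_i^2\, d\theta_i$ (for $i\ge 2$) in the form $R_i^2\, d\Phi_i$, the form $\omtw$ decomposes as a $3$-dimensional Lutz block in $(\phi, r_1, \theta_1)$ together with a standard contact summand in the $(R_i, \Phi_i)$-pairs. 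The ambient contact structure is thereby locally contactomorphic to a product of a $3$-dimensional Lutz tube and a standard $(2n-2)$-dimensional contact manifold.

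Next, I would match this product structure to the overtwisted disc model $\alpha_\rho = dz + \sum_{i=1}^{n-1} R_i^2\, d\Phi_i + \rho(R^2)\, d\Phi$ of Subsection~\ref{sec:newdefot}. The model form decomposes analogously into a $3$-dimensional Lutz-like summand $dz + \rho(R^2)\, d\Phi$ and a standard contact summand $\sum R_i^2\, d\Phi_i$. Identifying the standard contact factors via $(R_i, \Phi_i)$, and choosing $\epsilon$ small and a profile $\rho$ whose rotation fits within the full $\pi$-twist provided by $(\cos r_1^2, \sin r_1^2)$ as $r_1^2$ ranges over $[0,\pi]$, I would embed the disc $D_{K_\epsilon}$ into the chosen region of $\mlt$ as a contact submanifold whose germ of contact structures matches $\eta_{K_\epsilon}$. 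This realizes one overtwisted disc near $N_{\phi_0}$.

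The main obstacle will be the explicit contactomorphism from $(\cos r_1^2)\, d\phi + (\sin r_1^2)\, d\theta_1$ to $dz + \rho(R^2)\, d\Phi$, since the two conventions encode the Lutz rotation differently. This is essentially the $n=1$ case of the proposition, corresponding to the placement of an overtwisted disc of~\cite{newOT} inside a neighborhood of a classical overtwisted disk in a $3$-dimensional Lutz tube (cf.\ Figure~\ref{fig:otwdisc} and the remark at the end of Subsection~\ref{sec:newdefot}). Once this is done, it promotes to higher $n$ via the product decomposition above. Verifying that the piecewise smooth conditions (1)--(3) imposed on $\rho_{(z,R_i,\Phi_i)}$ in Subsection~\ref{sec:newdefot} are realizable within our region reduces to taking $\epsilon$ small compared to the available radii, which presents no further difficulty.
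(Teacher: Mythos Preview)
Your approach differs from the paper's in where you place the disc. The paper works near the transverse core of one copy of $U(\sqrt{\pi})$: it keeps $n-1$ of the radial coordinates small (bounded by $g(-1)<\sqrt{\pi/2}$) so that they play the role of the standard summand $\sum R_i^2\,d\Phi_i$, while the remaining radial coordinate carries the Lutz rotation and is allowed to exceed $\sqrt{\pi}$, forcing the disc to cross into the second copy of the double. You instead center the disc at the bLob $P$, with $r_2,\dots,r_n$ near $\sqrt{\pi}$ and $r_1$ carrying the Lutz rotation. Both strategies reduce to the $3$-dimensional placement of a \cite{newOT}-disc in a Lutz tube and then thicken by the remaining coordinates, so they are parallel in spirit.

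Two points deserve care. First, your claimed ``product decomposition'' of $\omtw$ is not literal: the $d\phi$-coefficient $\prod_i\cos r_i^2$ couples $r_1$ to $r_2,\dots,r_n$, so $\omtw$ is not a sum of a $3$-dimensional Lutz form and a standard form. The paper's placement has the same coupling, but there the perturbing factor $\prod_{i<n}\cos r_i^2$ is close to $1$, while in your placement $\prod_{i\ge 2}\cos r_i^2$ is close to $(-1)^{n-1}$; in either case one must argue (e.g.\ via the $(z,R_i,\Phi_i)$-dependence allowed for $\rho$, or a Gray-type isotopy) that this nearly-constant factor can be absorbed. Second, the $3$-dimensional disc in the paper has $r=g(z)>\sqrt{\pi}$ on the interval $z\in(-1+\epsilon,1-\epsilon)$, so your Lutz coordinate $r_1$ must likewise be allowed to pass $\sqrt{\pi}$; near $P$ this means entering the second copy of $U(\sqrt{\pi})$ across the face $(\partial U)_1$, which you should make explicit. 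The payoff of the paper's placement is that near the core the form $\omtw$ most directly resembles the model $\alpha_\rho$; the payoff of yours is a more visible link to the bLob, at the cost of a less transparent coordinate match.
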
 

  From Proposition~\ref{prop:mlttootw} 
and the construction of the generalized Lutz twist 
in Section~\ref{sec:genlztw},
we obtain the overtwistedness in Theorem~\ref{thm_main}. 
 In fact, the model $\pi$-Lutz tube is used 
to define the generalized Lutz twist along a transverse circle 
(see Section~\ref{sec:genlztw}). 
 Therefore, a generalized Lutz twist along a transverse circle 
creates an $S^1$-family of overtwisted discs. 

  Now, we show Proposition~\ref{prop:mlttootw}. 
\begin{proof}[Proof of Proposition~\ref{prop:mlttootw}.]
  Recall that the model $\pi$-Lutz tube $\lft(\mlt,\tilde\zeta\rgt)$ 
is constructed from the double of $\lft(U(\sqrt{\pi}),\zeta\rgt)$ 
in the generalized open Lutz tube 
$\lft(S^1\times\R^{2n}=S^1\times\C^n,\zeta\rgt)$. 
 Then we discus on $\lft(U(\sqrt{\pi}),\zeta\rgt)$. 
 Further, taking a point in $S^1\times\{0\}$, 
we discuss on 
$\lft(U(\sqrt{\pi}),\zeta\rgt)\subset\lft(\R\times\R^{2n},\zeta\rgt)$. 

  First, we review the realization of an overtwisted disc in dimension~$3$. 
 For an overtwisted disc $(D_{K_\epsilon},\eta_{K\epsilon})$, 
there exists a function $g\colon[-1,1-\epsilon]\to\R$ 
for which the disc 
\begin{equation*}
  D_{\textup{ot}}^2
  :=\lft\{(z,r,\phi)\in\R\times\R^2\mid z\in[-1,1-\epsilon], r=g(z)\rgt\}
  \cup\lft\{(z,r,\phi)\in\R\times\R^2\mid z=-1,\ r\le g(-1)\rgt\}. 
\end{equation*}
with the germ of contact structure from $\ker\{\cos r^2dz+\sin r^2d\phi\}$ 
is isomorphic to $(D_{K_\epsilon},\eta_{K\epsilon})$, 
like in Figure~\ref{fig:otwdisc}. 
 Note that $g(-1+\epsilon)=g(1-\epsilon)=\sqrt{\pi}$, and $g(-1)<\sqrt{\pi/2}$.

  We generalize the discussion to higher dimensions. 
 For the function $\tilde K_\epsilon\colon\dcyl\to\R$ defined as 
$\tilde K_\epsilon(z,r_i,\phi_i)=K_\epsilon(z)+K_\epsilon(\sum r_i^2)$, 
we have an overtwisted disc $\lft(D_{\tilde K_\epsilon},\eta_{\tilde K_\epsilon}\rgt)$.
 From the construction of the contact structure $\eta_{\tilde K_\epsilon}$, 
or Equations~\eqref{eq:otwform}, 
it corresponds to the disc 
\begin{align*}
  D_{\textup{ot}}^{2n}:=&\lft\{(z,r_i,\phi_i,r,\phi)
  \in\R\times\R^{2(n-1)}\times\R^2\;\lft|\;
  z\in[-1,1-\epsilon],\ r=g(z),\ \sum_{i=1}^{n-1}r_i^2\le g(-1)^2\rgt.\rgt\} \\
  &\cup\lft\{(z,r_i,\phi_i,r,\phi)
  \in\R\times\R^{2(n-1)}\times\R^2\;\lft|\;
  z\in[-1,1-\epsilon],\ r\le g(z),\ \sum_{i=1}^{n-1}r_i^2=g(-1)^2\rgt.\rgt\} \\
  &\cup\lft\{(z,r_i,\phi_i,r,\phi)
  \in\R\times\R^{2(n-1)}\times\R^2\;\lft|\;
  z=-1,\ r\le g(-1),\ \sum_{i=1}^{n-1}r_i^2\le g(-1)^2\rgt.\rgt\}
\end{align*}
with the germ of contact structures obtained from $\zeta$
(see Figure~\ref{fig:hdotwd}). 
%
%
\begin{figure}[htb]
  \centering
  {\small \includegraphics[height=3.6cm]{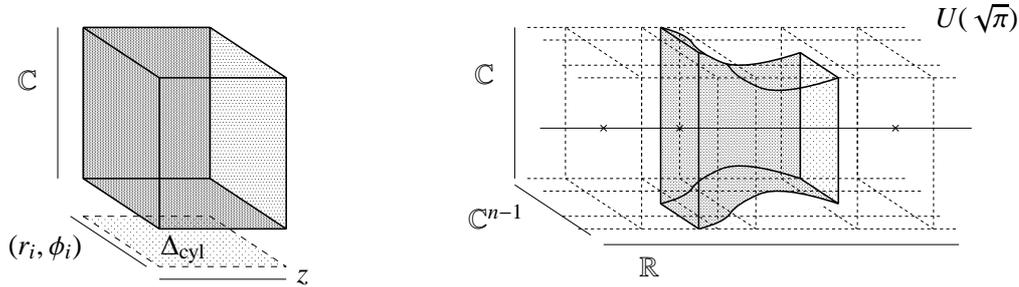}}
  \caption{Overtwisted disc in higher dimensions}
  \label{fig:hdotwd}
\end{figure} 
 Note that $D_{\textup{ot}}^{2n}$ is in the contact locus of 
$\lft(S^1\times\R^{2n},\zeta\rgt)$ because $g(-1)<\sqrt{\pi/2}$. 

  We confirm that the overtwisted disc is 
in the model $\pi$-Lutz tube $\lft(\mlt,\tilde\zeta\rgt)$. 
 The obtained disc $D_{\textup{ot}}^{2n}$ is not in $U(\sqrt{\pi})$ 
when $z\in(-1+\epsilon,\ 1-\epsilon)$. 
 However, we take the double $\lft(\du,\zeta'\rgt)$ 
of $\lft(U(\sqrt{\pi}),\zeta\rgt)$ 
in order to define the model $\pi$-Lutz tube. 
 Then $D_{\textup{ot}}^{2n}$ is in $\lft(\du,\zeta'\rgt)$. 
 Since the disc $D_{\textup{ot}}^{2n}$ lies in the contact locus 
of the confoliation $\zeta'$, 
it still exists in $\du$ after perturbing $\zeta'$ 
to a contact structure $\tilde\zeta$, 
from the discussion like in Subsubsection~\ref{sec:dbl-mlt}. 
 As a result, the disc $D_{\textup{ot}}^{2n}$ is in the model $\pi$-Lutz tube 
$\lft(\mlt,\tilde\zeta\rgt)$. 

  The discussion above is valid at any point 
in $S^1\times\{0\}\subset\lft(\mlt,\tilde\zeta\rgt)$. 
 Therefore, we have an $S^1$-family of overtwisted discs 
in the model $\pi$-Lutz tube $\lft(\mlt,\tilde\zeta\rgt)$. 
\end{proof}

  We should remark a relation between the generalized Lutz twist 
along a $\xi$-round hypersurface and the higher-dimensional overtwisted discs. 
 As we mentioned in Remark~\ref{rem:gltxird-diff}, 
this operation makes an $S^1$-family of bordered Legendrian open books 
except when the dimension of the manifold is $3$. 
 However, this operation does not make overtwisted discs directly 
in any dimension. 
 Because the operation is defined by inserting the wide Giroux domain 
$\lft(\wgd,\tilde\zeta\rgt)$. 
 The wide Giroux domain is constructed from the double of 
$(U(\sqrt{\pi}),\zeta)$ by removing neighborhoods of the both transverse cores, 
which intersect overtwisted discs (see Figure~\ref{fig:hdotwd}).

%
%

\end{document}